\declaretheoremstyle[headfont   = \bfseries\sffamily,
                     notefont   = \normalfont,
                     spaceabove = 6pt plus 0pt minus 2pt]{plain}
\declaretheoremstyle[headfont   = \bfseries\sffamily,
                     notefont   = \normalfont,
                     spaceabove = 6pt plus 0pt minus 2pt]{definition}
\declaretheorem[style = plain, numberwithin = section]{theorem}
\declaretheorem[style = plain,      sibling = theorem]{corollary}
\declaretheorem[style = plain,      sibling = theorem]{lemma}
\declaretheorem[style = plain,      sibling = theorem]{proposition}
\declaretheorem[style = definition, sibling = theorem]{definition}
\declaretheorem[style = definition, sibling = theorem]{example}
\declaretheorem[style = definition, sibling = theorem]{notation}
\declaretheorem[style = remark,     sibling = theorem]{remark}
\crefname{observation}{Observation}{Observations}
\Crefname{observation}{Observation}{Observations}
\crefname{conjecture}{Conjecture}{Conjectures}
\Crefname{conjecture}{Conjecture}{Conjectures}
\crefname{notation}{Notation}{Notations}
\Crefname{notation}{Notation}{Notations}
\crefname{paper}{Paper}{Papers}
\Crefname{paper}{Paper}{Papers}
\newcommand{\x}{\mathbf{x}}
\title{Homotopy Theory of Non-singular Simplicial Sets}
\author{
  Vegard Fjellbo \\
  Department of Mathematics \\
  University of Oslo \\
  Oslo, Norway \\
  \texttt{vegard.fjellbo@gmail.com} \\
   \And
}
\begin{document}
\maketitle

\begin{abstract}
\noindent A simplicial set is said to be \textbf{non-singular} if its non-degenerate simplices are embedded. Let $sSet$ denote the category of simplicial sets. We prove that the full subcategory $nsSet$ whose objects are the non-singular simplicial sets admits a model structure such that $nsSet$ is Quillen equivalent to $sSet$ equipped with the standard model structure due to Quillen \cite{Qu67}. The model structure on $nsSet$ is right-induced from $sSet$ and it makes $nsSet$ a proper cofibrantly generated model category. Together with Thomason's model structure on small categories \cite{Th80} and Raptis' model structure on posets \cite{Ra10} these form a square-shaped diagram of Quillen equivalent model categories in which the subsquare of right adjoints commutes.
\end{abstract}

\section{Introduction}
\label{sec:intro_hty}

\noindent This paper concerns the diagram
\begin{equation}
\label{eq:diagram_of_adjunctions}
\begin{gathered}
\xymatrix{
&& sSet \ar@<-.7ex>[ld]_{Sd^2} \\
Cat \ar@<-.7ex>[d]_p \ar@<-.7ex>[r]_N & sSet \ar@<-.7ex>[ur]_{Ex^2} \ar@<-.7ex>[l]_c \ar@<-.7ex>[d]_D \\
PoSet \ar@<-.7ex>[u]_U \ar@<-.7ex>[r]_N & nsSet \ar@<-.7ex>[l]_q \ar@<-.7ex>[u]_U
}
\end{gathered}
\end{equation}
which will be properly explained in \cref{sec:pre_exist}. For now it suffices to say the following.

The diagram (\ref{eq:diagram_of_adjunctions}) consists of adjunctions between categories, where $sSet$ is the category of simplicial sets, where $Cat$ is the category of small categories, where $PoSet$ is the full subcategory of $Cat$ whose objects are the partially ordered sets (posets) and where $nsSet$ is the category of non-singular simplicial sets. The (full) inclusion $U:nsSet\to sSet$ admits a right adjoint functor \cite[Rem.~2.2.12]{WJR13}, which is known as desingularization and denoted $D$.

Due to the preexisting literature, all of the categories that appear in (\ref{eq:diagram_of_adjunctions}), except $nsSet$, are model categories. Furthermore, all of the adjunctions that appear, except $(D,U)$ and $(q,N)$, are Quillen equivalences. The aim of this paper is to establish a model structure on $nsSet$ such that $(D,U)$ and $(q,N)$ are Quillen equivalences. This is essentially a reformulation of \cref{thm:main_homotopy_theory} below, which is our main result.

For a justification of the model structure on $nsSet$ that we here suggest, see the highlight that is \cref{lem:Pushout_along_strom_homotopically_wellbehaved} and its implication \cref{lem:unit_weak_eq}, which says that the unit of the adjunction $(DSd^2,Ex^2U)$ is a weak equivalence.

Given a simplicial set $X$, there is --- according to the Yoneda lemma --- a natural bijection $x\mapsto \bar{x}$ from the set $X_n$ of $n$-simplices to the set $sSet(\Delta [n],X)$ of simplicial maps from the standard $n$-simplex $\Delta [n]$ to $X$.
\begin{definition}\label{def:embedded_simplex}
Let $X$ be a simplicial set. The map $\bar{x}$ that corresponds to a simplex $x$ of $X$ under the natural bijection
\[X_n\xrightarrow{\cong } sSet(\Delta [n],X)\]
given by $x\mapsto \bar{x}$ is the \textbf{representing map} of $x$. A simplex is \textbf{embedded} if its representing map is degreewise injective.
\end{definition}
\noindent The terminology of \cref{def:embedded_simplex} makes sense of the notion of non-singular simplicial set. Here, we follow the terminology of Waldhausen, Jahren and Rognes \cite[Def.~1.2.2, p.~14]{WJR13}.

In the diagram, the functor $Sd:sSet\to sSet$ is the Kan subdivision \cite[p.~147]{FP90} and $Ex$ denotes its right adjoint \cite[Prop.~4.2.10]{FP90}, which is sometimes referred to as extension \cite[p.~212]{FP90}. The symbol $Sd^k$, for $k\geq 0$, simply denotes the $k$-fold iteration of $Sd$, so in particular the symbol $Sd^2$ means the composite of $Sd$ with itself. Similarly, the symbol $Ex^2$ denotes the functor that performs extension twice.

There is a standard model structure on $sSet$ due to Quillen \cite{Qu67} in which the weak equivalences are the maps whose geometric realizations are (weak) homotopy equivalences, the fibrations are the Kan fibrations and the cofibrations are the degreewise injective maps. Regarding the terminology of the theory model categories, we follow Hirschhorn's book \cite{Hi03}, but we also refer to Hovey's book \cite{Ho99}, which differs only slightly from the former. The differences are explained whenever relevant.

In the passage between the categories $sSet$ and $nsSet$, there is a homotopical issue, namely that desingularization does not in general preserve the homotopy type, though every simplicial set is cofibrant in the standard model structure. We will discuss the issue in \cref{sec:behavior}. Nevertheless, we will prove the following result.
\begin{theorem}\label{thm:main_homotopy_theory}
Equip $sSet$ with the standard model structure. There is a proper, cofibrantly generated model structure on $nsSet$ such that $f$ is a weak equivalence (resp. fibration) if and only if $Ex^2U(f)$ is a weak equivalence (resp. fibration), and such that
\[DSd^2:sSet\rightleftarrows nsSet:Ex^2U\]
is a Quillen equivalence.
\end{theorem}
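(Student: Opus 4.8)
The plan is to obtain the model structure on $nsSet$ by right-induction along the composite functor $Ex^2U : nsSet \to sSet$, using the standard (Quillen) model structure on the target. Concretely, I would define a map $f$ in $nsSet$ to be a weak equivalence (resp. fibration) exactly when $Ex^2U(f)$ is one in $sSet$, and then verify that these classes, together with the cofibrations determined by the left lifting property against trivial fibrations, assemble into a model structure. The standard tool here is a transfer/right-induction theorem (e.g.\ the dual of the Kan recognition theorem, as in Hirschhorn \cite{Hi03} or Hovey \cite{Ho99}): given the left adjoint $DSd^2 : sSet \to nsSet$ to $Ex^2U$, one takes the candidate generating cofibrations $I'$ and generating trivial cofibrations $J'$ to be the images under $DSd^2$ of the generating (trivial) cofibrations $I, J$ of $sSet$. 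The two nontrivial hypotheses to check are that $nsSet$ is complete and cocomplete (which it is, since it is a reflective/coreflective subcategory of $sSet$ with the adjoint $D$ supplying colimits) and the \emph{acyclicity condition}: every relative $J'$-cell complex must be sent by $Ex^2U$ to a weak equivalence.

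I expect the acyclicity condition to be the main obstacle, precisely because desingularization $D$ does not preserve homotopy type in general, as the excerpt warns. The crucial input is \cref{lem:Pushout_along_strom_homotopically_wellbehaved}: pushouts in $nsSet$ along the relevant (cofibration-like) maps are homotopically well-behaved, so that applying $Ex^2U$ to a pushout of a generating trivial cofibration yields a weak equivalence. I would argue that each generating trivial cofibration in $J' = DSd^2(J)$ is sent by $Ex^2U$ to a trivial cofibration in $sSet$, then use the cited pushout lemma together with the fact that filtered colimits of weak equivalences along cofibrations are weak equivalences in $sSet$ to conclude that arbitrary relative $J'$-cell complexes are weak equivalences. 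This establishes the transfer and hence the existence of the cofibrantly generated model structure with the prescribed weak equivalences and fibrations.

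Once the model structure exists, the adjunction $(DSd^2, Ex^2U)$ is a Quillen pair essentially by construction: $Ex^2U$ preserves fibrations and trivial fibrations because these are \emph{defined} by reflection along $Ex^2U$, so its left adjoint preserves cofibrations and trivial cofibrations. To upgrade this to a Quillen equivalence, I would invoke \cref{lem:unit_weak_eq}, which states that the unit of $(DSd^2, Ex^2U)$ is a weak equivalence. Combined with the standard criterion for Quillen equivalence (the unit is a weak equivalence on cofibrant objects and the right adjoint reflects weak equivalences between fibrant objects, or equivalently the derived unit and counit are weak equivalences), this yields the equivalence: the unit condition is handled by \cref{lem:unit_weak_eq}, and the reflection of weak equivalences by $Ex^2U$ is immediate since weak equivalences in $nsSet$ are by definition detected by $Ex^2U$.

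For properness, I would verify left and right properness directly. Right properness follows because fibrations and weak equivalences in $nsSet$ are detected by the right Quillen functor $Ex^2U$ into the right proper category $sSet$, and $Ex^2U$ (being a right adjoint) preserves the relevant pullbacks; so a pullback of a weak equivalence along a fibration is detected as a weak equivalence after applying $Ex^2U$, where right properness of $sSet$ applies. Left properness is the subtler half, since colimits in $nsSet$ are computed via $D$ and need not agree with those in $sSet$; here I would again lean on the homotopical control provided by \cref{lem:Pushout_along_strom_homotopically_wellbehaved}, showing that the pushout of a weak equivalence along a cofibration in $nsSet$ maps under $Ex^2U$ to a weak equivalence, reducing to left properness of $sSet$. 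This completes all the claims of \cref{thm:main_homotopy_theory}.
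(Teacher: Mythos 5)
Your overall architecture coincides with the paper's: right-induce along $Ex^2U$ via Kan's lifting theorem with generating sets $DSd^2(I)$ and $DSd^2(J)$, use \cref{lem:Pushout_along_strom_homotopically_wellbehaved} to control desingularized pushouts in the acyclicity condition, deduce the Quillen equivalence from \cref{lem:unit_weak_eq} together with the fact that $Ex^2U$ reflects weak equivalences by definition, and obtain right properness for free from right properness of $sSet$. One small omission: you list bicompleteness and acyclicity as the only nontrivial hypotheses of the transfer theorem, but it also requires that $DSd^2(I)$ and $DSd^2(J)$ permit the small object argument. This is not free in $nsSet$; the paper verifies it in \cref{lem:doubly_subdivided_inclusions_of_boundaries_and_horns_permit_small_object} using that the sources $DSd^2(\partial\Delta[n])$ and $DSd^2(\Lambda^k[n])$ are finite and that every relative cell complex is degreewise injective, the latter itself resting on the Str\o m-map machinery (\cref{lem:relative_cell_complexes_degreewise_injective}).

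The genuine gap is in your left properness argument. You propose to ``lean on'' \cref{lem:Pushout_along_strom_homotopically_wellbehaved}, but that lemma only controls a single pushout along a single Str\o m map, whereas a cofibration of $nsSet$ is a retract of a transfinite composition of cobase changes of elements of $DSd^2(I)$ --- and the class of Str\o m maps is explicitly not closed under composition. Bridging from the lemma to arbitrary cofibrations is real work, supplied in the paper by \cref{lem:pushout_along_transfinite_comp_of_strom} (a transfinite induction showing that $B\sqcup _AC\to D(B\sqcup _AC)$ is a weak equivalence when $A\to B$ is a composition of Str\o m maps; the successor step interleaves the Str\o m pushout lemma with left properness of $sSet$, and the limit step uses $Ex^\infty$ to commute homotopy groups with the sequential colimit) and by \cref{lem:pushout_along_cofibration} (a retract argument exhibiting $\eta _{B\sqcup _AC}$ for a general cofibration as a retract of the corresponding weak equivalence for a relative $DSd^2(I)$-cell complex). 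Without these two intermediate results, your claim that the pushout of a weak equivalence along a cofibration in $nsSet$ is detected as a weak equivalence by $Ex^2U$ does not follow from the cited lemma. The same transfinite-composition issue is latent in your acyclicity argument, though there it is less damaging because each stage is a trivial cofibration in $sSet$ and $J$-cof is closed under compositions, which is essentially the closure you invoke.
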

\noindent This theorem is our main result. Note that $Ex^2U(f)$ is a weak equivalence if and only if $U(f)$ is a weak equivalence, as $Ex$ preserves and reflects weak equivalences \cite[Cor.~4.6.21]{FP90}. Moreover, we will in \cref{sec:relations} argue that each adjunction that appears in (\ref{eq:diagram_of_adjunctions}) is a Quillen equivalence.

Notice that non-singular simplicial sets is an intermediate between ordered simplicial complexes and simplicial sets in the following sense. In an ordered simplicial complex, the vertices of every simplex are pairwise distinct. Moreover, every simplex is uniquely determined by its vertices. In a non-singular simplicial set, the vertices of every non-degenerate simplex are pairwise distinct. However, a simplex is not necessarily uniquely determined by its vertices. In an arbitrary simplicial set, the vertices of a non-degenerate simplex are not necessarily pairwise distinct.

Moreover, $nsSet$ as a category is strictly between ordered simplicial complexes and $sSet$. This is automatic from the definition of $nsSet$ as a full subcategory of $sSet$, because every simplicial set associated with an ordered simplicial complex is non-singular. Making $nsSet$ a model category puts the homotopy theory of ordered simplicial complexes more directly into the modern context of model categories.

An advantage of non-singular simplicial sets over simplicial sets is that the former have a natural PL structure described in \cite[Sec.~3.4,~p.~126--127]{WJR13}. The key to this fact is the compatibility between the Kan subdivision of simplicial sets and the barycentric subdivision of simplicial complexes. The former performed on a non-singular simplicial set is (associated with) an ordered simplicial complex. See the explanation on page 36 in the book by Waldhausen, Jahren and Rognes \cite{WJR13} and Lemmas 2.2.10. and 2.2.11. \cite[p.~38]{WJR13} in the same book. The category $nsSet$ plays an important role there. Compared with ordered simplicial complexes, the category of non-singular simplicial sets has colimits that are somewhat more meaningful in the sense that more of the colimits are preserved by geometric realization.

In \cref{sec:pre_exist}, we properly introduce the diagram (\ref{eq:diagram_of_adjunctions}). \cref{sec:structure} explains our chosen method for establishing the model structure on $nsSet$.

Sections \ref{sec:behavior} throughout \ref{sec:lifting} concern the proof of \cref{prop:main_homotopy_theory}, which says that $nsSet$ is a cofibrantly generated model category and that $(DSd^2,Ex^2U)$ is a Quillen pair. Towards a proof of this, \cref{sec:behavior} begins by discussing the intution behind \cref{thm:main_homotopy_theory} and its connection to regular neighborhood theory. On that note, we introduce the important notion of Str\o m map whose properties are discussed in \cref{sec:properties}. The Str\o m maps form a class of auxiliary morphisms, which is used as a tool to establish the announced model structure on $nsSet$. \cref{sec:planes} handles important technicalities in that it shows how desingularization behaves when applied to certain pushouts. In \cref{sec:lifting}, we verify that the criteria laid out in \cref{sec:structure} are indeed satisfied so that \cref{prop:main_homotopy_theory} holds.

We discuss cofibrations in \cref{sec:cofib} and state and prove \cref{prop:axiom_of_propriety}, which is the axiom of propriety. The sole purpose of \cref{sec:inverse} is to prove that $(DSd^2,Ex^2U)$ is a Quillen equivalence, which is stated as \cref{prop:homotopy_inverse}. \cref{thm:main_homotopy_theory} then immediately follows.

Finally, in \cref{sec:relations}, we fullfill our promise that every adjunction in the diagram (\ref{eq:diagram_of_adjunctions}) is a Quillen equivalence.

\section{Pre-existing model structures}
\label{sec:pre_exist}

We will explain the aspects of the diagram (\ref{eq:diagram_of_adjunctions}) that were not explained in \cref{sec:intro_hty}.

If the inclusion of a full subcategory has a left adjoint, then we will refer to the subcategory as a \textbf{reflective} subcategory. Note that the terminology is not standard. Although the fullness assumption seems more common today than before, Mac Lane's notion \cite{ML98}, for example, does not include fullness as an assumption in his definition. Nor do Adámek and Rosický \cite{AR15} include fullness as an assumption in their notion.

\subsection{Simplicial sets}

We view a \textbf{simplicial set} as a functor $\Delta ^{op}\to Set$ where $\Delta$ is the category of finite ordinals and $\Delta ^{op}$ its opposite. The objects of $\Delta$ are the totally ordered sets
\[[n]=\{ 0<1<\cdots <n\} ,\]
$n\geq 0$, and its morphisms are the order-preserving functions $\alpha :[m]\to [n]$, meaning $\alpha (i)\leq \alpha (j)$ whenever $i\leq j$. We refer to the morphisms as \textbf{operators}. This is because they operate (to the right) on the simplices of a simplicial set. We will write $X_n=X([n])$ for brevity whenever $X$ is a simplicial set. The symbol $sSet$ denotes the category of simplicial sets and natural transformations. To a large extent we follow the notation from Chapter 4 of Fritsch and Piccinini's book ``Cellular Structures in Topology'' \cite{FP90} on the topic of simplicial sets.

Throughout this paper, we will use the following symbols.
\begin{notation}\label{not:prototypes_cofibr_trivial_cofibr_standard_pre_exist}
The elements of the set
\[I=\{\partial \Delta [n]\to \Delta [n]\mid n\geq 0\}\]
of inclusions of boundaries into the standard simplices are prototypes of the cofibrations in $sSet$ equipped with the standard model structure. Similarly, the elements of the set
\[J=\{\Lambda ^k[n]\to \Delta [n]\mid 0\leq k\leq n>0\}\]
of inclusions of horns into the standard simplices are prototypes of the trivial cofibrations.
\end{notation}

\subsection{Passage between simplicial sets and non-singular simplicial sets}

Notice that a product of non-singular simplicial sets is again non-singular, and that a simplicial subset of a non-singular simplicial set is again non-singular \cite[Rem.~2.2.12]{WJR13}. These facts give rise rise to the construction of desingularization.
\begin{definition}{Remark 2.2.12. in \cite[p.~39]{WJR13} }
\label{def:desing}
Let $X$ be a simplicial set. The \textbf{desingularization} of $X$, denoted $DX$, is the image of the map
\[X\rightarrow \prod _{f:X\rightarrow Y}Y\]
given by $x\mapsto (f(x))_f$, where the product is indexed over the quotient maps $f:X\rightarrow Y$ with non-singular target $Y$.
\end{definition}
\noindent The construction $DX$ is functorial and the degreewise surjective map that comes with it is seen to be a natural map $\eta _X:X\to UDX$ \cite[Rem.~2.2.12]{WJR13}.

From the construction in \cref{def:desing}, it follows that any map $X\xrightarrow{f} Y$ whose target $Y$ is non-singular factors through $X\to DX$ \cite[Rem.~2.2.12]{WJR13}. This is because any degreewise surjective map whose source is $X$ and whose target is non-singular can be canonically identified with a quotient map. On the other hand, the factorization is unique because the degreewise surjective maps are precisely the epics of $sSet$. In fact, the natural map $\eta _X$ is the unit of a unit-counit pair $(\eta _X,\epsilon _A)$ \cite[Rem.~2.2.12]{WJR13}. This is also stated as \cite[Lem.~2.2.2.]{Fj20-DN}.

In the language suggested above, the category of non-singular simplicial sets is a reflective subcategory of the category of simplicial sets. Hirschhorn takes as an assumption on his notion of model category that the underlying category is bicomplete \cite[Def.~7.1.3, p.~109]{Hi03}, so we do too. We say that a category is \textbf{bicomplete} if it is complete and cocomplete. A consequence of the fact that $nsSet$ is a reflective subcategory of $sSet$ is that $nsSet$ is bicomplete. An explanation of this fact is provided by \cite[Cor.~2.2.3.]{Fj20-DN}.

\subsection{Thomason's model structure}

The symbol $N$ denotes the nerve functor \cite[p.~106]{Se68}. It takes a small category $\mathscr{C}$ to the simplicial set whose set of $n$-simplices, for each $n\geq 0$, is the set of functors $[n]\to \mathscr{C}$. According to G. Segal \cite[p.~105]{Se68}, the nerve construction appears at least implicitly in the work of Grothendieck. It is well known that $N$ is fully faithful and that it has a left adjoint $c:sSet\to Cat$, called categorification. The fact can be extracted from \cite{GZ67}, according to R. Fritsch and D. M. Latch \cite[p.~147]{FL81}.

Due to Thomason, we can give equip $Cat$ with a right-induced cofibrantly generated model category such that $(cSd^2,Ex^2N)$ is a Quillen equivalence \cite{Th80} whose source is $sSet$ with the standard model structure due to Quillen. Cisinski have made a correction to Thomason's erroneous argument that $Cat$ is proper \cite{Ci99} so that there is one more adjective that one can use.

\subsection{Raptis' model structure}

A \textbf{poset} is a small category such that each hom set consists of at most one element and such that there are no isomorphisms but the identities. Notice that a set equipped with a reflexive, antisymmetric and transitive binary relation $\leq$ can intuitively be viewed as a poset by letting there be a morphism $x\to y$ if and only if $x\leq y$.

We let $U:PoSet\to Cat$ be the inclusion and $p$ its right adjoint. The easiest way to obtain $p$ is probably to consider the category of preorders, which is strictly between $Cat$ and $PoSet$. A small category $\mathscr{C}$ is a \textbf{preorder} if each hom set $\mathscr{C} (c,c')$ has at most one element. Let $PreOrd$ denote the full subcategory of $Cat$ whose objects are the preorders. It is not hard to see that each of the inclusions of the composite
\[PoSet\to PreOrd\to Cat\]
has a left adjoint. In other words, the category of posets is a reflective subcategory of $Cat$. 

Raptis has restricted Thomason's model structure to the category of posets so that $(p,U)$ is a Quillen equivalence \cite{Ra10}.

\subsection{Passage between non-singular simplicial sets and posets}

Overload the symbol $N$ so that it also refers to the corestriction to $nsSet$ of the restriction of $N:Cat\to sSet$ to the subcategory $PoSet$. By this we simply mean the following. If $G:\mathscr{B} \to \mathscr{A}$ is a functor between categories, then the \textbf{image of $F$}, denoted $\textrm{Im} \, F$, is the smallest subcategory of the target $\mathscr{B}$ that contains any object and any morphism that is hit by $G$. If $\mathscr{C}$ is a subcategory of $\mathscr{A}$ that contains $\textrm{Im} \, F$, then we say that the induced functor $\mathscr{B} \to \mathscr{C}$ is the \textbf{corestriction of $G$ to $\mathscr{C}$}.

Define $q=pcU$. As $U:nsSet\to sSet$ is a full inclusion it follows that $q$ is left adjoint to $N:PoSet\to nsSet$. To verify the latter statement, let $G$ in \cref{lem:corestriction_of_rightadjoint_to_full_subcategory} be the composite
\[PoSet\xrightarrow{U} Cat\xrightarrow{N} sSet\]
and let $\mathscr{C} =nsSet$.
\begin{lemma}\label{lem:corestriction_of_rightadjoint_to_full_subcategory}
Any corestriction $\bar{G}$ of a right adjoint $G:\mathscr{B} \to \mathscr{A}$ to a full subcategory $\mathscr{C}$ of its target $\mathscr{A}$ admits a left adjoint. Moreover, a restriction to $\mathscr{C}$ of a choice $F$ of a left adjoint to $G$ is left adjoint to $\bar{G}$.
\end{lemma}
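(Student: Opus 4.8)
The plan is to take the promised restriction as the candidate left adjoint and then assemble the required natural bijection of hom-sets out of two pieces: the adjunction isomorphism we already have, together with the bijection on hom-sets guaranteed by fullness. First I would fix notation: write $\iota:\mathscr{C}\to\mathscr{A}$ for the inclusion of the full subcategory, so that the definition of corestriction gives $\iota\bar{G}=G$; in particular $Gb$ lies in $\mathscr{C}$ for every object $b$ of $\mathscr{B}$, since $\operatorname{Im}G\subseteq\mathscr{C}$ is exactly the hypothesis under which $\bar{G}$ is defined. Fixing a left adjoint $F$ of $G$, I set $\bar{F}=F\iota:\mathscr{C}\to\mathscr{B}$, which is precisely the restriction of $F$ to $\mathscr{C}$, and I claim $\bar{F}\dashv\bar{G}$.

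The core step is to produce, for objects $c$ of $\mathscr{C}$ and $b$ of $\mathscr{B}$, the chain of bijections
\[\mathscr{B}(\bar{F}c,b)=\mathscr{B}(F\iota c,b)\cong\mathscr{A}(\iota c,Gb)=\mathscr{A}(\iota c,\iota\bar{G}b)\cong\mathscr{C}(c,\bar{G}b),\]
in which the outer equalities are definitional, the first isomorphism is the adjunction isomorphism of $(F,G)$ evaluated at $\iota c$ and $b$, and the last isomorphism is the inverse of the map on hom-sets induced by $\iota$, a bijection because $\iota$ is fully faithful (being the inclusion of a full subcategory). Composing these yields a bijection $\mathscr{B}(\bar{F}c,b)\cong\mathscr{C}(c,\bar{G}b)$.

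The remaining step is to check that this composite bijection is natural in $c$ and in $b$. I would argue that each constituent is natural --- the adjunction isomorphism of $(F,G)$ by hypothesis, and the fully faithful $\iota$ inducing a natural bijection on hom-sets --- while the two equalities are equalities of functors; a composite of natural isomorphisms is again natural, so the conclusion follows and $\bar{F}$ is left adjoint to $\bar{G}$.

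I do not expect a genuine obstacle: the statement is formal, and the only point that must be handled with care is that the final isomorphism relies on $Gb=\iota\bar{G}b$, which in turn rests on the standing containment $\operatorname{Im}G\subseteq\mathscr{C}$ built into the very definition of the corestriction. Keeping track of that containment, so that an $\mathscr{A}$-morphism into $Gb$ may legitimately be read as a $\mathscr{C}$-morphism into $\bar{G}b$, is the one thing I would be careful not to skip.
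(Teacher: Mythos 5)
Your argument is correct. It does, however, package the verification differently from the paper: you establish the adjunction $\bar{F}\dashv\bar{G}$ by exhibiting the natural bijection of hom-sets
\[\mathscr{B}(F\iota c,b)\cong\mathscr{A}(\iota c,Gb)=\mathscr{A}(\iota c,\iota\bar{G}b)\cong\mathscr{C}(c,\bar{G}b),\]
splicing the given adjunction isomorphism with the bijection on hom-sets coming from full faithfulness of the inclusion, whereas the paper works with the unit--counit formulation: it observes that the counit $\epsilon_b:FG(b)\to b$ already reads as a map $(F\iota)\bar{G}(b)\to b$, lifts the unit $\eta_{\iota(c)}:\iota(c)\to GF\iota(c)$ uniquely through the fully faithful inclusion to a map $\bar{\eta}_c:c\to\bar{G}F\iota(c)$, and then checks the triangle identities. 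The two routes are formally equivalent, and each has a mild advantage: your hom-set composition makes bijectivity and naturality immediate from the constituents, with no triangle identities left to verify, while the paper's construction hands you explicit formulas for the unit and counit of the restricted adjunction (the counit is literally the old counit), which is convenient if one later needs to manipulate them. The one point requiring care in your version --- that $Gb=\iota\bar{G}b$ so that a morphism into $Gb$ can be read in $\mathscr{C}$ --- is exactly the point you flag, and it is indeed guaranteed by the definition of corestriction.
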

\begin{proof}
Let $U$ denote the inclusion $\mathscr{C} \to \mathscr{A}$. The counit $\epsilon _b:FG(b)\to b$ of the adjunction
\[F:\mathscr{A} \rightleftarrows \mathscr{B} :G\]
is already a natural map $(FU)\bar{G} (b)\to b$ as $FG=F(U\bar{G} )=(FU)\bar{G}$. We let $\bar{\epsilon } _b$ denote this map. If $c$ is an object of $\mathscr{C}$, then we have the unit $\eta _{U(c)}:U(c)\to GF(U(c))$. As $GF(U(c))=(U\bar{G} )F(U(c))=U(\bar{G} FU(c))$ there is a unique map $\bar{\eta } _c:c\to \bar{G} FU(c)$ such that $\eta _{U(c)}=U(\bar{\eta } _c)$. It is straight forward to check that the natural maps $\bar{\eta } _c$ and $\bar{\epsilon } _b$ satisfy the compatibility criteria of a unit and a counit.
\end{proof}
\noindent By design, then, the square of right adjoints in (\ref{eq:diagram_of_adjunctions}) commutes precisely, meaning $N\circ U=U\circ N$.

\subsection{Jardine's subdivision model structures}

J. F. Jardine \cite{Ja13} has established a model structure on $sSet$ that he calls the $Sd^2$-model structure. It is defined in such a manner that $(Sd^2,Ex^2)$ is a Quillen equivalence \cite[Thm.~1.1.,~p.~274]{Ja13} and that $(c,N)$ is a Quillen equivalence \cite[Thm.~3.1.,~p.~286]{Ja13}. The weak equivalences of the $Sd^2$-model structure are the same as the standard ones.

The fibrations and cofibrations of the $Sd^2$-model structure are defined thus. A map $p$ of $sSet$ is an \textbf{$Ex^2$-fibration} if $Ex^2(p)$ is a Kan fibration. To define the cofibrations, we might as well introduce the following standard terminology at this point.
\begin{definition}
Given a solid arrow commutative square
\begin{displaymath}
\xymatrix{
A \ar[d]_i \ar[r] & X \ar[d]^p \\
B \ar[r] \ar@{-->}[ur] & Y
}
\end{displaymath}
in some category, we say that a dashed map $B\to X$ is a \textbf{lifting} if it makes the whole diagram commute. In this case we say that $(i,p)$ is a \textbf{lifting-extension pair}, that $i$ has the \textbf{left lifting property (LLP)} with respect to $p$ and that $p$ has the \textbf{right lifting property (RLP)} with respect to $i$.
\end{definition}
\noindent A map $i$ of $sSet$ is a \textbf{$Sd^2$-cofibration} if $(i,p)$ is a lifting-extension pair for each $Ex^2$-fibration $p$. Because $Ex$ preserves Kan fibrations \cite[Lem.~4.6.15, p.~213]{FP90}, the $Sd^2$-model structure is shifted in the sense that the weak equivalences are the same and that there are more fibrations and less cofibrations.

\section{Strategy to establish the model structure}
\label{sec:structure}

\noindent We find ourselves in a similar situation as that of Thomason. Prior to his article \cite{Th80} there was a homotopy theory of small categories for which Quillen's paper \cite{Qu67} is a reference. It is thought of as inherited from topological spaces via the classifying space. The nerve induces an equivalence of the homotopy categories, yet its left adjoint $c:sSet\to Cat$ does not induce an (inverse) equivalence.

After the recent development of his time, Thomason discovered that the geometrically favorable construction $cSd^2$ preserves homotopy type \cite{Th80} and managed to put a model structure on small categories that makes it Quillen equivalent to simplicial sets, with $cSd^2$ as the left Quillen functor. Fritsch and Latch \cite{FL81} present a contemporary view of the historical development and explain how surprising the result was.

Similarly, there exists a homotopy theory of ordered simplicial complexes thought of as inherited from simplicial sets. The category of ordered simplicial complexes is slightly smaller than $nsSet$. The inclusion $U:nsSet\to sSet$ is full by definition and has a left adjoint called desingularization, as we explained in \cref{sec:intro_hty}. We will display examples of the behavior of desingularization in \cref{sec:behavior}.

There are two main differences between our situation and that of Thomason, namely that we can build on his work and that desingularization is in some sense more difficult to work with.

Categorification $c:sSet\to Cat$ has the following rather elementary description. For $X$ a simplicial set, let the the set of objects $obj(cX)$ of $cX$ be the set $X_0$ of $0$-simplices. The morphisms are freely generated by the set $X_1$ of $1$-simplices with $x\in X_1$ viewed as a morphism $x\delta _1\to x\delta _0$, and then imposing a composition relation $x\delta _1=x\delta _0\circ x\delta _2$, for all $2$-simplices $x\in X_2$. Here, $\delta _j$ is the elementary face operator that omits the index $j$.

On the other hand, desingularization has the two descriptions given in \cref{def:desing} and \cite[Thm.~2.1.3.]{Fj20-DN}. In general, these can be more difficult to work with. We will essentially be using the latter description, albeit a modification.

The strategy we shall use to obtain the model structure on $nsSet$ is essentially the lifting method that Thomason \cite{Th80} uses, except that it has become standardized. It is summarized in the following theorem, credited to D. M. Kan. The language we use is that of Theorem 11.3.2 in Hirschhorn's textbook \cite[p.~214]{Hi03}.
\begin{theorem}[D.M. Kan]\label{thm:lifting_across_adjunction}
Suppose there is an adjunction
\[F:\mathscr{M} \rightleftarrows \mathscr{N} :G\]
where $\mathscr{M}$ is a cofibrantly generated model category with $I$ as the set of generating cofibrations and $J$ as the set of generating trivial cofibrations. Furthermore, assume that $\mathscr{N}$ is a bicomplete category. If
 \begin{enumerate}
  \item {(First lifting condition) each of the sets $FI$ and $FJ$ permits the small object argument, and}
  \item{(Second lifting condition) $G$ takes relative $FJ$-cell complexes to weak equivalences,}
 \end{enumerate}
then $\mathscr{N}$ is a cofibrantly generated model category where the weak equivalences of $\mathscr{N}$ are the morphisms $f$ such that $Gf$ is a weak equivalences, and where $FI$ and $FJ$ are the generating cofibrations and generating trivial cofibrations, respectively. Moreover, $(F,G)$ becomes a Quillen pair.
\end{theorem}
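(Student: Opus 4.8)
The plan is to declare the three distinguished classes in $\mathscr{N}$ and then verify Quillen's axioms by using the adjunction to translate every lifting problem back into $\mathscr{M}$, where the model structure is already available. Define a map $f$ of $\mathscr{N}$ to be a \emph{weak equivalence} precisely when $Gf$ is a weak equivalence, a \emph{fibration} precisely when $Gf$ is a fibration, and a \emph{cofibration} to be any map having the LLP with respect to every map that has the RLP with respect to $FI$. Since $G$ is a functor and the weak equivalences of $\mathscr{M}$ satisfy two-out-of-three and are closed under retracts, the same properties pass to the weak equivalences of $\mathscr{N}$; closure under retracts of the fibrations and of the cofibrations is formal, as each is defined by a lifting property. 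Thus all the content lies in the lifting and factorization axioms and in the mutual compatibility of the three classes.

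The first observation is that the adjunction turns $FI$ and $FJ$ into exactly the correct test maps. By the adjunction bijection, $f$ has the RLP with respect to $FJ$ if and only if $Gf$ has the RLP with respect to $J$, that is, if and only if $Gf$ is a fibration; likewise $f$ has the RLP with respect to $FI$ if and only if $Gf$ is a trivial fibration. Hence the fibrations are precisely the maps with the RLP against $FJ$, and the maps with the RLP against $FI$ are precisely those $f$ for which $Gf$ is both a fibration and a weak equivalence, which by our definitions is exactly the class of maps that are simultaneously fibrations and weak equivalences in $\mathscr{N}$. The trivial fibrations are therefore identified with the $FI$-injectives with no further work. Applying the small object argument, available by the first lifting condition, every map of $\mathscr{N}$ factors as a relative $FI$-cell complex followed by an $FI$-injective, and as a relative $FJ$-cell complex followed by an $FJ$-injective.

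The crux is to show that the trivial cofibrations coincide with the retracts of relative $FJ$-cell complexes, and this is where the second lifting condition enters. One inclusion is direct: because the $FI$-injectives are contained in the $FJ$-injectives, the class of maps with the LLP against the $FI$-injectives contains the class with the LLP against the $FJ$-injectives, so every $FJ$-cofibration is a cofibration; and by the second lifting condition $G$ sends relative $FJ$-cell complexes to weak equivalences, so every $FJ$-cofibration is also a weak equivalence. For the reverse inclusion I would take a cofibration and weak equivalence $f$, factor it by the small object argument as $p\circ j$ with $j$ a relative $FJ$-cell complex and $p$ an $FJ$-injective. The second lifting condition makes $j$ a weak equivalence, so two-out-of-three forces $p$ to be a weak equivalence; thus $p$ is a fibration and a weak equivalence, hence an $FI$-injective. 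Since $f$ is a cofibration it lifts against $p$, and the resulting lift exhibits $f$ as a retract of $j$ by the standard retract argument, placing $f$ among the $FJ$-cofibrations. I expect this retract argument, and in particular the role the second lifting condition plays in making $j$ a weak equivalence, to be the main obstacle, as everything else is either formal or a direct transcription across the adjunction.

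With the three classes pinned down, the remaining axioms fall out. Lifting of cofibrations against trivial fibrations is their very definition, while lifting of trivial cofibrations against fibrations is the statement that $FJ$-cofibrations lift against $FJ$-injectives. The two factorizations produced above now read as (cofibration, trivial fibration) and (trivial cofibration, fibration), once one records that relative $FI$-cell complexes are cofibrations and relative $FJ$-cell complexes are trivial cofibrations. Cofibrant generation is then immediate with $FI$ and $FJ$ as the generating sets, the required smallness being precisely the first lifting condition. Finally $(F,G)$ is a Quillen pair because $G$ preserves fibrations and trivial fibrations, which is forced by the definitions of the classes in $\mathscr{N}$.
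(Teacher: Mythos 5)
Your proposal is correct and follows the standard transfer argument — translating lifting problems across the adjunction to identify the $FJ$-injectives with the fibrations and the $FI$-injectives with the trivial fibrations, running the small object argument for both factorizations, and using the second lifting condition together with the retract argument to pin the trivial cofibrations down as the $FJ$-cofibrations — which is precisely the proof of the result the paper does not reprove but cites as Theorem 11.3.2 of Hirschhorn (itself resting on the recognition theorem). The only gloss worth recording is that your claim that every $FJ$-cofibration is a weak equivalence silently uses that $FJ$-cofibrations are retracts of relative $FJ$-cell complexes (obtained by factoring and applying your own retract argument) together with retract-closure of weak equivalences, both of which are available in your setup.
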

\noindent Formalities ensure that a morphism $f$ in $\mathscr{N}$ is a fibration in the lifted model structure if and only if $Gf$ is a fibration. The language of \cref{thm:lifting_across_adjunction} is fairly standard, but it will be interpreted or explained to a suitable extent when we get to the relevant part.

We will make use of \cref{thm:lifting_across_adjunction} in order to establish the model structure by considering the case when
\[(F,G)=(DSd^2,Ex^2U)\]
and when $sSet$ has the standard model structure.

Recall \cref{not:prototypes_cofibr_trivial_cofibr_standard_pre_exist}. In our case, $I$ serves as a set of generating cofibrations for $sSet$ and $J$ serves as a set of generating trivial cofibrations for $sSet$. The method of lifting the standard model structure on $sSet$ to $nsSet$ is justified by the fact that $U(f)$ is a weak equivalence if and only if $Ex^2U(f)$ is a weak equivalence.

The key to verifying the second lifting condition is the notion Str\o m map, introduced in \cref{def:strom}. Str\o m maps have good technical properties, as shown by \cref{prop:Strom-maps_closed_under_cobasechange}, and good homotopical properties, as shown by \cref{lem:Pushout_along_strom_homotopically_wellbehaved}. At the same time, the class of Str\o m maps contains the sets $DSd^2(I)$ and $DSd^2(J)$, which \cref{cor:two-fold_subdivision_strom} shows.

\section{Homotopical behavior of desingularization}
\label{sec:behavior}

In this section, we display examples of the behavior of desingularization. Specifically, we display the results of desingularizing a few models of spheres. In \cref{sec:inverse}, we explain that the two-fold Kan subdivision $Sd^2$ performed before desingularization ensures that the homotopy type is not altered. This is analogous to Thomason's situation \cite{Th80}. Note that performing the Kan subdivision once before desingularization is not enough.

Forming the colimit of a diagram in $nsSet$ can be done by forgetting that the involved simplicial sets are non-singular, forming the colimit in $sSet$ instead, and finally applying desingularization.

Consider some of the usual models for spheres. It is not hard to realize that
\[D(\Delta [n]/\partial \Delta [n])\cong \Delta [0]\]
for every $n>0$. Not much harder is it to see that
\[DSd(\Delta [n]/\partial \Delta [n])\cong \Delta [1]\]
for every $n>1$. Thus in these cases, desingularization does not preserve homotopy type. Note that the case $n=1$ is special as $Sd(\Delta [1]/\partial \Delta [1])$ is two copies of $\Delta [1]$ glued together along their boundaries. Hence, this simplicial set is already non-singular. So desingularization trivially preserves homotopy type in this case.

The $2$-sphere can be modeled by $X=Sd^2(\Delta [2]/\partial \Delta [2])$. This is because the Kan subdivision preserves colimits \cite[Cor.~4.2.11]{FP90} and degreewise injective maps \cite[Cor.~4.2.9]{FP90}. Hence, the simplicial set $Sd^2(\partial \Delta [2])$ can be considered the boundary of $Sd^2(\Delta [2])$ and the simplicial set $X$ is the result of collapsing this boundary. \cref{fig:ch1_Desing_doublysubd_2simpl} is meant to indicate that $DX$ is the suspension of a $1$-sphere, modelled by a 12-gon, which we have formulated as \cite[Prop.~2.4.4.]{Fj20-DN}. In other words, desingularization preserves the homotopy type in this case. One might attribute the behavior to properties of the inclusion
\[Sd^2(\partial \Delta [2])\to Sd^2(\Delta [2])\]
of the boundary. \cref{def:strom}, \cref{cor:two-fold_subdivision_strom} and \cref{lem:Pushout_along_strom_homotopically_wellbehaved} will make this claim precise. The intuition is that the two-fold subdivision creates a sufficiently nice neighborhood around the boundary.

Here we transfer Thomason's insights \cite[Prop.~4.3]{Th80}, which most likely come from regular neighborhood theory, to our setting. Regular neighborhood theory is treated in the sources \cite[§3]{RS72} and \cite[§II]{Hu69}.

The functor $DSd$ takes the instance
\[d_{\Delta [2]/\partial \Delta [2]}:Sd(\Delta [2]/\partial \Delta [2])\xrightarrow{\sim } \Delta [2]/\partial \Delta [2]\]
of the last vertex map, which is in general a weak equivalence, to a map whose source is a model of the $2$-sphere and whose target is contractible. Hence, we get the following result.
\begin{lemma}\label{lem:non-existence_result_mod_str}
Let $sSet$ have the standard model structure due to Quillen. There is no model structure on $nsSet$ such that $DSd$ is a left Quillen functor.
\end{lemma}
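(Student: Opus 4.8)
The plan is to argue by contradiction, the engine being Ken Brown's lemma: a left Quillen functor sends weak equivalences between cofibrant objects to weak equivalences. Since the cofibrations of the standard model structure on $sSet$ are exactly the monomorphisms, every simplicial set is cofibrant. Thus a left Quillen functor out of $sSet$ sends \emph{every} weak equivalence to a weak equivalence, and it is this degenerate-looking strength that I would exploit.

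Concretely, suppose $nsSet$ carried a model structure for which $DSd$ were left Quillen. I would feed into the previous observation the last vertex map
\[d_{\Delta[2]/\partial\Delta[2]}:Sd(\Delta[2]/\partial\Delta[2])\xrightarrow{\ \sim\ }\Delta[2]/\partial\Delta[2],\]
a weak equivalence in $sSet$. Then $DSd(d_{\Delta[2]/\partial\Delta[2]})$ would be a weak equivalence in $nsSet$. But by the two computations recorded above its source $DSd^2(\Delta[2]/\partial\Delta[2])$ is a model of the $2$-sphere while its target $DSd(\Delta[2]/\partial\Delta[2])\cong\Delta[1]$ is contractible, so $DSd$ would exhibit a $2$-sphere as weakly equivalent to a point. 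Applying $|U(-)|$, which sends the weak equivalences of $nsSet$ to weak homotopy equivalences, yields $S^2\simeq *$, which is absurd; hence no such model structure can exist.

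The genuinely homotopy-theoretic input here is minimal — only Ken Brown's lemma in the especially clean setting where every source object is cofibrant — so the main obstacle is the combinatorial content already assembled before the statement: the identifications $DSd^2(\Delta[2]/\partial\Delta[2])\simeq S^2$ and $DSd(\Delta[2]/\partial\Delta[2])\cong\Delta[1]$, together with the fact that $DSd$ carries $d_{\Delta[2]/\partial\Delta[2]}$ to the resulting collapse. A point deserving care in the write-up is the quantification over all model structures: what the argument truly shows is that any model structure on $nsSet$ rendering $DSd$ left Quillen would have to identify a $2$-sphere with a point up to weak equivalence, and it is the detection of homotopy type by $|U(-)|$ that rules this out.
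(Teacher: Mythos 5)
Your argument is correct and is essentially the paper's own proof: Ken Brown's lemma applied to the last vertex map $d_{\Delta[2]/\partial\Delta[2]}$, using that every object of $sSet$ is cofibrant in the standard model structure, so that a left Quillen functor would have to send this weak equivalence to a weak equivalence, contradicting the sphere-versus-point computation. Your closing caveat about the quantification over all model structures is well taken, but note that the paper's proof carries the same implicit assumption that ``weak equivalence in $nsSet$'' is detected by $\lvert U(-)\rvert$, so you are not deviating from its route.
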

\begin{proof}
Any simplicial set is cofibrant in the standard model structure on $sSet$ due to Quillen. This is because the cofibrations are precisely the degreewise injective maps. See Proposition 3.2.2. in Hovey's book \cite{Ho99} for a reference.

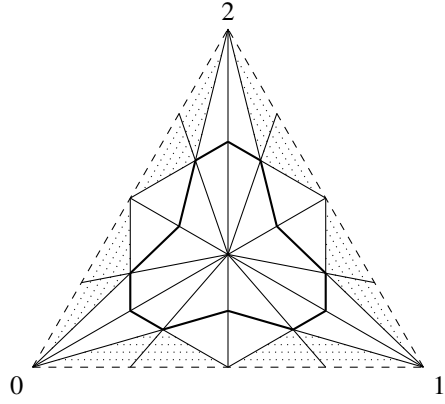
\begin{figure}
\centering
\begin{tikzpicture}

\coordinate (2) at (90:3cm);
\coordinate (0) at (210:3cm);
\coordinate (1) at (-30:3cm);

\node [above] at (2) {2};
\node [below left] at (0) {0};
\node [below right] at (1) {1};

\draw [dashed] (0.north east)--(2.south) coordinate[midway](02);
\draw [dashed] (0.north east)--(1.north west) coordinate[midway](01);
\draw [dashed] (1.north west)--(2.south) coordinate[midway](12);

\coordinate (012) at (barycentric cs:0=1,1=1,2=1);

\draw (0.north east)--(012) coordinate[midway](0<012);
\draw (1.north west)--(012) coordinate[midway](1<012);
\draw (2.south)--(012) coordinate[midway](2<012);
\draw (01)--(012) coordinate[midway](01<012);
\draw (12)--(012) coordinate[midway](12<012);
\draw (02)--(012) coordinate[midway](02<012);

\coordinate (0<01<012) at (barycentric cs:0=1,01=1,012=1);
\coordinate (0<01) at (barycentric cs:0=0.5,01=0.5);
\foreach \x in {(0),(0<01),(01),(012)}
	\draw (0<01<012)--\x;
\foreach \x in {(0<012),(01<012)}
	\draw [thick] (0<01<012)--\x;

\coordinate (1<01<012) at (barycentric cs:1=1,01=1,012=1);
\coordinate (1<01) at (barycentric cs:1=0.5,01=0.5);
\foreach \x in {(1),(1<01),(01),(012)}
	\draw (1<01<012)--\x;
\foreach \x in {(01<012),(1<012)}
	\draw [thick] (1<01<012)--\x;

\coordinate (1<12<012) at (barycentric cs:1=1,12=1,012=1);
\coordinate (1<12) at (barycentric cs:1=0.5,12=0.5);
\foreach \x in {(1),(1<12),(12),(012)}
	\draw (1<12<012)--\x;
\foreach \x in {(12<012),(1<012)}
	\draw [thick] (1<12<012)--\x;

\coordinate (2<12<012) at (barycentric cs:2=1,12=1,012=1);
\coordinate (2<12) at (barycentric cs:2=0.5,12=0.5);
\foreach \x in {(2),(2<12),(12),(012)}
	\draw (2<12<012)--\x;
\foreach \x in {(12<012),(2<012)}
	\draw [thick] (2<12<012)--\x;
	
\coordinate (2<02<012) at (barycentric cs:2=1,02=1,012=1);
\coordinate (2<02) at (barycentric cs:2=0.5,02=0.5);
\foreach \x in {(2),(2<02),(02),(012)}
	\draw (2<02<012)--\x;
\foreach \x in {(02<012),(2<012)}
	\draw [thick] (2<02<012)--\x;
	
\coordinate (0<02<012) at (barycentric cs:0=1,02=1,012=1);
\coordinate (0<02) at (barycentric cs:0=0.5,02=0.5);
\foreach \x in {(0),(0<02),(02),(012)}
	\draw (0<02<012)--\x;
\foreach \x in {(02<012),(0<012)}
	\draw [thick] (0<02<012)--\x;

\foreach \x in {0.2,0.4,0.6,0.8}
	\draw [dotted] (barycentric cs:0=\x,0<01<012=1-\x)--(barycentric cs:01=\x,0<01<012=1-\x);

\foreach \x in {0.2,0.4,0.6,0.8}
	\draw [dotted] (barycentric cs:1=\x,1<01<012=1-\x)--(barycentric cs:01=\x,1<01<012=1-\x);

\foreach \x in {0.2,0.4,0.6,0.8}
	\draw [dotted] (barycentric cs:1=\x,1<12<012=1-\x)--(barycentric cs:12=\x,1<12<012=1-\x);

\foreach \x in {0.2,0.4,0.6,0.8}
	\draw [dotted] (barycentric cs:2=\x,2<12<012=1-\x)--(barycentric cs:12=\x,2<12<012=1-\x);

\foreach \x in {0.2,0.4,0.6,0.8}
	\draw [dotted] (barycentric cs:2=\x,2<02<012=1-\x)--(barycentric cs:02=\x,2<02<012=1-\x);

\foreach \x in {0.2,0.4,0.6,0.8}
	\draw [dotted] (barycentric cs:0=\x,0<02<012=1-\x)--(barycentric cs:02=\x,0<02<012=1-\x);
\end{tikzpicture}
\caption{Desingularizing the double subdivision of the standard $2$-simplex with collapsed boundary.}
\label{fig:ch1_Desing_doublysubd_2simpl}
\end{figure}

By Ken Brown's lemma \cite[Lem.~1.1.12, p.~6]{Ho99} a left Quillen functor takes each weak equivalence between cofibrant objects to a weak equivalence. However, $DSd(d_{\Delta [2]/\partial \Delta [2]})$ is not a weak equivalence. Thus $DSd$ is not a left Quillen functor.
\end{proof}
\noindent Moreover, the diagram
\begin{displaymath}
\xymatrix{
DSd^2(\Delta [2]) \ar[d]^\sim & DSd^2(\partial \Delta [2]) \ar[l] \ar[d]^\sim \ar[r] & DSd^2(\Delta [0]) \ar[d]^\sim \\
DSd(\Delta [2]) & DSd(\partial \Delta [2]) \ar[l] \ar[r] & DSd(\Delta [0])
}
\end{displaymath}
indicates that the map $DSd(\partial \Delta [2])\to DSd(\Delta [2])$ is most likely a non-candidate for a cofibration whenever $nsSet$ is a left proper model category. \cref{lem:non-candidate_cofibration} below justifies this educated guess.

We recall the axiom of propriety, which is desirable in a model category. Consider a commutative square
\begin{displaymath}
 \xymatrix{
 X \ar[d]_i \ar[r]^f & Z \ar[d]^j \\
 Y \ar[r]_g & W
 }
\end{displaymath}
in some category. If the square is cartesian, then we say that $f$ is the \textbf{base change of $g$ along $j$}. If it is cocartesian, then we say that $g$ is the \textbf{cobase change of $f$ along $i$}.
\begin{definition}
Consider a model category. We say that the model category is \textbf{right proper} if weak equivalences are preserved under taking base change along fibrations. Consider a model category. We say that the model category is \textbf{left proper} if weak equivalences are preserved under taking cobase change along cofibrations. If a model category is both right proper and left proper, then we say that it is \textbf{proper}.
\end{definition}
\noindent Note that $sSet$ with the standard model structure is proper \cite[Thm.~13.1.13, p.~242]{Hi03}.

There is a gluing lemma that says that if we have a commutative diagram
\begin{displaymath}
\xymatrix{
B \ar[d]^\sim & A \ar[l] \ar[d]^\sim \ar[r] & C \ar[d]^\sim \\
Y & X \ar[l] \ar[r] & Z
}
\end{displaymath}
in a left proper model category such that at least one map in each row is a cofibration and such that all the vertical maps are weak equivalences, then the canonical map
\[B\sqcup _AC\xrightarrow{\sim } Y\sqcup _XZ\]
of pushouts is a weak equivalence. A reference for the dual of this result is Proposition 13.3.9 in Hirschhorn's book \cite[pp.~246--247]{Hi03}. Note that a more common glueing lemma demands that $A\to B$ and $X\to Y$ be cofibrations and not simply that at least one map in each row be a cofibration.

The former of the two versions of the glueing lemma yields the following result.
\begin{lemma}\label{lem:non-candidate_cofibration}
Assume that $nsSet$ is given a model structure such that it is a left proper model category whose weak equivalences are those maps $f$ such that $\lvert Uf\rvert$ is a (weak) homotopy equivalence. Then neither of the two maps
\[DSd(\partial \Delta [2])\to DSd(\Delta [2])\]
and
\[DSd(\partial \Delta [2])\to DSd(\Delta [0])\]
is a cofibration or neither of the two maps
\[DSd^2(\partial \Delta [2])\to DSd^2(\Delta [2])\]
and
\[DSd^2(\partial \Delta [2])\to DSd^2(\Delta [0])\]
is a cofibration.
\end{lemma}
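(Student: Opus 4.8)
The plan is to apply the first of the two versions of the glueing lemma to the square-shaped diagram displayed just before the lemma, whose top row is the two-fold subdivision diagram $DSd^2(\Delta [2])\leftarrow DSd^2(\partial \Delta [2])\to DSd^2(\Delta [0])$, whose bottom row is the one-fold subdivision diagram $DSd(\Delta [2])\leftarrow DSd(\partial \Delta [2])\to DSd(\Delta [0])$, and whose vertical maps arise by applying $D$ to the instances $d_{Sd(-)}:Sd^2(-)\to Sd(-)$ of the last vertex map. First I would check that the three vertical maps are weak equivalences. Each of $Sd(\partial \Delta [2])$, $Sd(\Delta [2])$, $Sd(\Delta [0])$ and their two-fold subdivisions is (associated with) an ordered simplicial complex and hence non-singular, so desingularization acts as the identity on all six corner objects and on the connecting maps. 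Consequently the vertical maps are literally the last vertex maps $Sd^2(-)\to Sd(-)$, which are weak equivalences, and by the assumed characterization of the weak equivalences of $nsSet$ via $\lvert U(-)\rvert$ they are weak equivalences there too.

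Next I would identify the two pushouts. Because $D$ and $Sd$ are each left adjoints, the composites $DSd$ and $DSd^2$ preserve colimits, and the pushout of $\Delta [2]\leftarrow \partial \Delta [2]\to \Delta [0]$ in $sSet$ is $\Delta [2]/\partial \Delta [2]$. Hence the top pushout is $DSd^2(\Delta [2]/\partial \Delta [2])$ and the bottom pushout is $DSd(\Delta [2]/\partial \Delta [2])$. By \cite[Prop.~2.4.4.]{Fj20-DN} the former is the suspension of a $1$-sphere, so applying $\lvert U(-)\rvert$ yields a space homotopy equivalent to the $2$-sphere, whereas $DSd(\Delta [2]/\partial \Delta [2])\cong \Delta [1]$ is contractible. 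The canonical comparison map of pushouts therefore cannot be a weak equivalence, as its realization after $U$ would have to turn a $2$-sphere into a contractible space.

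Finally I would argue by contraposition. The assumed model structure on $nsSet$ is left proper and all three vertical maps are weak equivalences, so the first version of the glueing lemma would force the comparison map of pushouts to be a weak equivalence the moment at least one horizontal map in each row were a cofibration. Since that comparison map is not a weak equivalence, it must fail that each row contains a cofibration. Negating ``each row contains a cofibration'' gives exactly the disjunction asserted: either neither bottom-row map $DSd(\partial \Delta [2])\to DSd(\Delta [2])$, $DSd(\partial \Delta [2])\to DSd(\Delta [0])$ is a cofibration, or neither top-row map $DSd^2(\partial \Delta [2])\to DSd^2(\Delta [2])$, $DSd^2(\partial \Delta [2])\to DSd^2(\Delta [0])$ is a cofibration.

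I expect the only delicate points to be the bookkeeping that desingularization is genuinely the identity on the six subdivided simplices and boundaries---so that the vertical maps really are the last vertex maps and the two pushouts really are $DSd^2(\Delta [2]/\partial \Delta [2])$ and $DSd(\Delta [2]/\partial \Delta [2])$---and keeping the contrapositive of the glueing lemma's row hypothesis logically straight. Once these are in place, the argument is a direct substitution into the homotopy types already computed earlier in this section.
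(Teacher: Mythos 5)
Your proposal is correct and is precisely the argument the paper intends: the paper gives no explicit proof, only the remark that ``the former of the two versions of the glueing lemma yields the following result,'' and your write-up supplies exactly the missing details (the vertical last-vertex maps are weak equivalences since all six corners are already non-singular, the pushouts in $nsSet$ are $DSd^2(\Delta[2]/\partial\Delta[2])\simeq S^2$ and $DSd(\Delta[2]/\partial\Delta[2])\cong\Delta[1]$ because $DSd$ and $DSd^2$ are left adjoints, and the contrapositive of the glueing lemma gives the stated disjunction). No gaps.
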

\noindent \cref{lem:non-candidate_cofibration} justifies the educated guess that $DSd(\partial \Delta [2])\to DSd(\Delta [2])$ is most likely not a cofibration.

Before we can state the nature of these properties we need a few definitions. Let $\varepsilon ^n_j:[0]\to [n]$ be the \textbf{vertex operator} given by $0\mapsto j$. Usually, we omit the upper index.
\begin{definition}
Let $X$ be a simplicial set, and $A$ a simplicial subset. We say that $A$ is \textbf{full} if it has the property that any simplex of $X$ is a simplex of $A$ provided its vertices
are in $A$.
\end{definition}
\begin{definition}
Suppose $X$ a simplicial set. Let $A$ be a full simplicial subset of $X$. We say that $A$ is an \textbf{eden (resp. abyss)} in $X$ if it has the property that any $1$-simplex $x$ of $X$ whose first (resp. zeroth) vertex $x\varepsilon _1$ (resp. $x\varepsilon _0$) is in $A$, is itself is a simplex of $A$.
\end{definition}
\noindent We wish to compare the new notions with analogous notions in the $Cat$, partly because the intuition is more readily available in $Cat$ than in $sSet$.

Consider the notions of sieve and cosieve.
\begin{definition}
Suppose $\mathscr{C}$ a small category. Let $\mathscr{D}$ be a subcategory of $\mathscr{C}$. We will say that $\mathscr{D}$ is a \textbf{(co)sieve} in $\mathscr{C}$ if whenever we have a morphism $c\to c'$ whose target (source) is an object of $\mathscr{D}$, then the morphism is itself a morphism of $\mathscr{D}$.
\end{definition}
\noindent Intuitively, a sieve is a place to which there is no entry and a cosieve is a place from which there is no escape. The notion of sieve corresponds to the notion of eden and the notion of cosieve corresponds to the notion of abyss. In $PoSet$, the notion of sieve is equivalent to the notion of ideal when a poset is thought of as a set equipped with a reflexive, antisymmetric and transitive binary operation.

Note the following relationship between the notions of sieve and eden and between cosieve and abyss.
\begin{lemma}\label{lem:nerve_of_(co)sieve}
The nerve of a sieve (resp. cosieve) is an eden (resp. abyss).
\end{lemma}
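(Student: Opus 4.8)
The plan is to prove the sieve/eden half directly and obtain the cosieve/abyss half by a duality argument. Let $\mathscr{D}$ be a sieve in the small category $\mathscr{C}$, and write $\iota:\mathscr{D}\to\mathscr{C}$ for the inclusion. First I would record that $N\iota:N\mathscr{D}\to N\mathscr{C}$ is a monomorphism, so that $N\mathscr{D}$ is genuinely a simplicial subset of $N\mathscr{C}$: an $n$-simplex of the nerve is a functor $[n]\to\mathscr{C}$, and since a subcategory inclusion is injective on objects and on morphisms, post-composition with $\iota$ is injective in each degree. Under this identification, the $n$-simplices of $N\mathscr{D}$ are precisely those functors $F:[n]\to\mathscr{C}$ whose entire image, objects and morphisms alike, lies in $\mathscr{D}$.

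Next I would verify that $N\mathscr{D}$ is full. Let $x$ be an $n$-simplex of $N\mathscr{C}$, that is a functor $F:[n]\to\mathscr{C}$, and suppose all its vertices $F(0),\dots,F(n)$ are objects of $\mathscr{D}$. Every morphism in the image of $F$ has the form $F(i\le j):F(i)\to F(j)$, whose target $F(j)$ is an object of $\mathscr{D}$; the defining property of a sieve then forces $F(i\le j)$ to be a morphism of $\mathscr{D}$. Hence $F$ factors through $\mathscr{D}$, so $x$ is a simplex of $N\mathscr{D}$, which is exactly fullness.

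Then I would check the remaining eden condition on $1$-simplices. A $1$-simplex $x$ of $N\mathscr{C}$ is a morphism $f:c_0\to c_1$ of $\mathscr{C}$, with zeroth vertex $x\varepsilon_0=c_0$ its source and first vertex $x\varepsilon_1=c_1$ its target. If $x\varepsilon_1$ lies in $N\mathscr{D}$, i.e. $c_1$ is an object of $\mathscr{D}$, then the sieve property applied to $f$ (whose target is $c_1$) shows that $f$ is a morphism of $\mathscr{D}$, so $x$ is a $1$-simplex of $N\mathscr{D}$. Together with fullness this says precisely that $N\mathscr{D}$ is an eden.

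Finally, for the cosieve/abyss case the cleanest route is duality: if $\mathscr{D}$ is a cosieve in $\mathscr{C}$, then $\mathscr{D}^{op}$ is a sieve in $\mathscr{C}^{op}$, and the nerve intertwines the passage to opposite categories with the simplicial order-reversing involution, which relabels vertices by $j\mapsto n-j$ and in particular swaps $x\varepsilon_0$ and $x\varepsilon_1$ on each $1$-simplex; this carries edens to abysses and finishes the proof. Alternatively I would simply repeat the two steps above verbatim, replacing ``target'' by ``source'' and $x\varepsilon_1$ by $x\varepsilon_0$ throughout. I expect no serious obstacle here; the only points needing a little care are the bookkeeping in the fullness step \dash applying the sieve condition to \emph{every} morphism $F(i\le j)$ in the image of the functor rather than only to the edges $F(i)\to F(i+1)$ \dash and the convention-matching that the first vertex of a $1$-simplex is its target while the zeroth vertex is its source, so that ``a sieve is closed under morphisms into $\mathscr{D}$'' lines up exactly with ``an eden is closed under $1$-simplices whose first vertex lies in $A$.''
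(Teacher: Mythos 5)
Your proof is correct. The paper states this lemma without any proof at all, treating it as an immediate observation, so there is nothing to compare against; your direct verification \dash identifying $n$-simplices of $N\mathscr{D}$ with functors $[n]\to\mathscr{C}$ landing in $\mathscr{D}$, checking fullness by applying the sieve condition to every $F(i\le j)$, and matching the convention that $x\varepsilon_1$ is the target of the corresponding morphism \dash is exactly the argument the authors are implicitly relying on, and your duality reduction for the cosieve/abyss half mirrors the opposite-simplicial-set trick the paper itself invokes in the proof of \cref{lem:elysiums_abysses_preserved_cobase_change}.
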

\noindent Furthermore, note the following characterization.
\begin{lemma}\label{lem:(co)sieve_characterization_elementary}
A simplicial subset $A$ of a simplicial set $X$ is an eden in $X$ if and only if any simplex whose last vertex is in $A$ is also a simplex of $A$. Similarly, the simplicial subset $A$ is an abyss in $X$ if and only if any simplex whose zeroth vertex is in $A$ is also a simplex of $A$.
\end{lemma}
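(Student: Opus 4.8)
The two asserted characterizations are order-dual to one another, so the plan is to prove the statement about edens in full and then obtain the one about abysses by reversing the order on each $[n]$ (replacing $\varepsilon_j$ by $\varepsilon_{n-j}$ throughout). For the eden case the reverse implication is immediate: if every simplex of $X$ whose last vertex lies in $A$ already belongs to $A$, then in particular every simplex all of whose vertices lie in $A$ belongs to $A$, so $A$ is full; and the $n=1$ instance of the hypothesis (the last vertex of a $1$-simplex $x$ being $x\varepsilon _1$) is exactly the defining closure property of an eden. Hence such an $A$ is an eden, and only the forward implication carries content.

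For the forward implication, suppose $A$ is an eden and let $x\in X_n$ be a simplex with last vertex $x\varepsilon _n\in A$; the goal is $x\in A$. I would first show that \emph{every} vertex of $x$ lies in $A$, and then invoke fullness. For each index $j$ with $0\leq j<n$, consider the order-preserving map $\alpha _j:[1]\to [n]$ determined by $0\mapsto j$ and $1\mapsto n$, and let $e_j=x\alpha _j$ be the corresponding edge of $x$. Chasing the right action of operators gives $\alpha _j\varepsilon _0=\varepsilon _j$ and $\alpha _j\varepsilon _1=\varepsilon _n$, so $e_j$ has zeroth vertex $e_j\varepsilon _0=x\varepsilon _j$ and first vertex $e_j\varepsilon _1=x\varepsilon _n$, the latter lying in $A$ by hypothesis. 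The defining property of an eden then forces $e_j\in A$, and since $A$ is a simplicial subset it is closed under all operators, so in particular $x\varepsilon _j=e_j\varepsilon _0\in A$. As $j$ was arbitrary and $x\varepsilon _n\in A$ already, every vertex of $x$ lies in $A$, whence $x\in A$ by fullness.

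The abyss case runs dually: one uses the edges $\beta _j:[1]\to [n]$ given by $0\mapsto 0$ and $1\mapsto j$, whose zeroth vertex is $\beta _j\varepsilon _0=\varepsilon _0$, so that $e_j'=x\beta _j$ has zeroth vertex $x\varepsilon _0\in A$; the abyss condition then pulls each $e_j'$ into $A$ and hence each $x\varepsilon _j=e_j'\varepsilon _1$ into $A$, and fullness finishes as before. I do not expect any genuine obstacle here. The only points requiring care are the bookkeeping of the contravariant action of $\Delta$ \dash confirming the composite identities $\alpha _j\varepsilon _0=\varepsilon _j$ and $\alpha _j\varepsilon _1=\varepsilon _n$ (and their abyss analogues) \dash and the observation that membership of an edge in the simplicial subset $A$ drags both of its endpoints into $A$; once these are in hand, fullness does the rest.
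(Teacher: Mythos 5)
Your proof is correct: the reduction of the general case to the $1$-simplex condition via the edges $\alpha_j$ (and dually $\beta_j$), followed by fullness, is exactly the right argument, and the operator bookkeeping $\alpha_j\varepsilon_0=\varepsilon_j$, $\alpha_j\varepsilon_1=\varepsilon_n$ checks out. The paper states this lemma without proof, so there is nothing to compare against; your write-up simply supplies the omitted (and evidently intended) elementary argument.
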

\noindent \cref{lem:(co)sieve_characterization_categorical} below provides another characterization that is more useful.

Performing desingularization is messy in general. However, there are useful situations in which the process is predictable. Such as when one desingularizes a quotient $X/A$ of a non-singular simplicial set $X$ by an eden $A$. \cref{prop:desingularizing_after_collapsing_elysium} will make this precise. Understanding the behavior of $D$ towards quotients of the kind we mentioned is vital to our discussion of the properties of Str\o m maps.

The new notions are of the following categorical nature.
\begin{lemma}\label{lem:(co)sieve_characterization_categorical}
A simplicial subset $A$ of a simplicial set $X$ is an eden (resp. abyss) if and only if there is a map
$\chi :X\rightarrow \Delta [1]$ such that the square
\begin{displaymath}
 \xymatrix{
 A \ar[d] \ar[r] & \Delta [0] \ar[d]^{N\varepsilon _0\; (\textrm{resp.} \; N\varepsilon _1)} \\
 X \ar[r]_(.45)\chi & \Delta [1]
 }
\end{displaymath}
is cartesian. Here,
\[\varepsilon _0:[0]\to [1]\; (\textrm{resp.} \; \varepsilon _1:[0]\to [1])\]
is the vertex operator given by
\[0\mapsto 0\; (\textrm{resp.} \; 0\mapsto 1).\]
We refer to $\chi$ as the \textbf{characteristic map of $A$ as an eden (resp. abyss) in $B$}.
\end{lemma}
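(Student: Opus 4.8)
The plan is to begin by unwinding what the cartesian condition says. An $n$-simplex of $\Delta[1]$ is an order-preserving map $[n]\to[1]$, and $N\varepsilon_0$ (resp. $N\varepsilon_1$) picks out, in each degree, the constant simplex at the vertex $0$ (resp. $1$). Since pullbacks of simplicial sets are computed degreewise and the map $A\to\Delta[0]$ is the unique one, asking the displayed square to be cartesian is exactly asking that the inclusion identify
\[A_n=\{x\in X_n\mid \chi(x)=\mathrm{const}_0\}\quad(\text{resp. }A_n=\{x\in X_n\mid \chi(x)=\mathrm{const}_1\}),\]
where $\mathrm{const}_\ell$ is the constant $n$-simplex at $\ell$. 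I would carry out the eden case in full and obtain the abyss case by the dual argument, interchanging $\varepsilon_0$ with $\varepsilon_1$ and the roles of the last and zeroth vertices; so below $A$ is an eden and we use $N\varepsilon_0$.

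For the forward implication I would construct $\chi$ by hand: for $x\in X_n$ let $\chi(x)\colon[n]\to[1]$ send $i$ to $0$ if the vertex $x\varepsilon_i$ lies in $A$ and to $1$ otherwise. The first thing to verify is that $\chi(x)$ is order-preserving, hence really an element of $\Delta[1]_n$, and this is precisely where the eden hypothesis is used. Given $i\le j$ with $x\varepsilon_j\in A$, the edge $x\alpha$ for $\alpha\colon[1]\to[n]$, $0\mapsto i$, $1\mapsto j$, has first vertex $x\alpha\varepsilon_1=x\varepsilon_j\in A$, so the eden property forces $x\alpha\in A$ and therefore its zeroth vertex $x\varepsilon_i\in A$; thus $\chi(x)(i)=0$ whenever $\chi(x)(j)=0$, which is the desired monotonicity. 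Naturality of $\chi$ reduces to the identity $\alpha\varepsilon_i=\varepsilon_{\alpha(i)}$ and is routine. Finally the square is cartesian because $\chi(x)=\mathrm{const}_0$ holds iff every vertex of $x$ lies in $A$, which by fullness of $A$ is equivalent to $x\in A$.

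For the converse, suppose $\chi$ is given with $A_n=\{x\mid\chi(x)=\mathrm{const}_0\}$. I would verify the two clauses of the definition of eden. Fullness is immediate: if every vertex $x\varepsilon_i$ lies in $A$, then $\chi(x)(i)=0$ for all $i$, so $\chi(x)=\mathrm{const}_0$ and $x\in A$. For the remaining condition I would invoke the elementary characterization \cref{lem:(co)sieve_characterization_elementary}: if the last vertex of $x$ lies in $A$ then $\chi(x)(n)=0$, and since $\chi(x)$ is order-preserving and $n$ is maximal this forces $\chi(x)=\mathrm{const}_0$, whence $x\in A$.

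The only genuinely load-bearing step is the monotonicity of the constructed $\chi$, and the argument above isolates exactly the input it needs: the eden condition says precisely that once a later vertex of a simplex lies in $A$, every earlier vertex does too, which is the combinatorial content of order-preservation into $[1]$. Everything else is formal, with fullness supplying the one nontrivial equivalence, namely that $x\in A$ iff all vertices of $x$ lie in $A$.
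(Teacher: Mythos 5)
The paper declares this proof ``straight-forward'' and omits it entirely, so there is nothing to compare against; your argument is correct and is evidently the intended one. You correctly isolate the one load-bearing point --- that the eden condition on $1$-simplices is exactly what makes the vertex-indicator function $i\mapsto\chi(x)(i)$ order-preserving into $[1]$ --- and the remaining steps (naturality via $\alpha\varepsilon_i=\varepsilon_{\alpha(i)}$, the degreewise identification of the pullback, and fullness giving $x\in A$ iff all vertices of $x$ lie in $A$) are all sound.
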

\noindent The proof of this lemma is straight-forward, and is left out.

Part of the interest in the notion of eden is that the Kan subdivision creates edens from arbitrary simplicial subsets, which we state as \cref{lemma_subdivision_creates_left_sieve} below. First, we remind the reader how to define the Kan subdivision.

Consider a simplicial set $X$ and the poset $X^\sharp$ of non-degenerate simplices. There is a morphism $y\to x$ from $y$ to $x$ if $y$ is a face of $x$. The operation of taking a simplicial set $X$ to $X^\sharp$ defines a functor $(-)^\sharp :sSet\to PoSet$.  A map $f:X\to Y$ induces the map $f^\sharp :X^\sharp \to Y^\sharp$ given by sending $x$ to the non-degenerate part $f(x)^\sharp$ of $f(x)$.
\begin{lemma}\label{lem:sharp_creates_sieves}
Let $X$ be a simplicial set and let $A$ be a simplicial subset of $X$. Then $A^\sharp$ is a sieve in $X^\sharp$.
\end{lemma}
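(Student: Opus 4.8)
The plan is to unwind the definitions and reduce everything to the single fact that a simplicial subset is closed under the action of the operators, together with the observation that the inclusion $A\hookrightarrow X$ reflects non-degeneracy. Recall that the objects of $X^\sharp$ are the non-degenerate simplices of $X$, and that there is a (unique) morphism $y\to x$ exactly when $y$ is a face of $x$, that is, when $y=x\mu$ for some injective operator $\mu$. Unwinding the definition of sieve, proving that $A^\sharp$ is a sieve in $X^\sharp$ amounts to checking that whenever $x$ is an object of $A^\sharp$ and $y\to x$ is a morphism of $X^\sharp$ with target $x$, then $y$ is again an object of $A^\sharp$ and the morphism $y\to x$ already lies in $A^\sharp$.

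To this end I would fix a non-degenerate simplex $x$ of $A$ together with a morphism $y\to x$ of $X^\sharp$, and write $y=x\mu$ with $\mu$ injective. Since $A$ is a simplicial subset of $X$, it is closed under the right action of every operator of $\Delta$; in particular $x\in A$ forces $y=x\mu\in A$. It remains to see that $y$, which is non-degenerate in $X$ by virtue of being an object of $X^\sharp$, is also non-degenerate as a simplex of $A$. This is immediate because the inclusion $A\hookrightarrow X$ commutes with the degeneracy operators: any expression $y=s_i y'$ witnessing degeneracy inside $A$ also witnesses it inside $X$, contradicting the non-degeneracy of $y$ in $X$. Hence $y$ is an object of $A^\sharp$, and since the face relation defining the morphisms of $A^\sharp$ is just the restriction of the one on $X^\sharp$, the morphism $y\to x$ lies in $A^\sharp$. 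This is the sieve condition.

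I do not anticipate a genuine obstacle; the content of the statement is essentially the formal fact that $A$ is closed under passing to faces, and the induced map $A^\sharp\to X^\sharp$ is then seen to be the honest inclusion of a subposet. The one point that deserves care is the bookkeeping around degeneracy: a face $x\mu$ of a non-degenerate simplex need not itself be non-degenerate in general, so the non-degeneracy of $y$ must be taken from the hypothesis that $y$ is an object of $X^\sharp$ rather than deduced from $x$. Keeping the notions of \emph{face via an injective operator} and \emph{non-degenerate} separate, and invoking closure of $A$ under operators explicitly, is all that is needed to make the argument airtight.
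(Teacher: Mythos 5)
Your argument is correct: the sieve condition for $A^\sharp$ in $X^\sharp$ reduces exactly to closure of $A$ under the operator action plus the fact that non-degeneracy in $X$ implies non-degeneracy in $A$, and you handle the one delicate point (that non-degeneracy of $y$ comes from $y$ being an object of $X^\sharp$, not from $x$) explicitly. The paper states this lemma without proof, treating it as an observation, and your write-up is precisely the routine verification it leaves to the reader.
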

\noindent This observation will be used in the proof of \cref{lem:Barratt_nerve_of_inclusion_of_non-sing_is_strom} below.
\begin{definition}\label{def:Barratt_nerve}
We refer to the endofunctor of simplicial sets defined on objects by $BX=N(X^\sharp )$ as the \textbf{Barratt nerve}.
\end{definition}
\noindent Note that this terminology is not standard. We follow \cite[Def.~2.2.3, p.~35]{WJR13}, but Fritsch and Piccinini call $B$ the \emph{star functor} \cite[Exercise~4.6.33,~p.~219]{FP90}. The \textbf{Kan subdivision} is the left Kan extension of $B$ along the Yoneda embedding $\Upsilon :\Delta \to sSet$. Loosely, the Kan subdivision is the best way to adapt \emph{barycentric subdivision} to simplicial sets.

We can elaborate the previous paragraph. The \textbf{simplex category} of $X$, denoted $\Delta \downarrow X$, is the small category whose objects are the representing maps $\bar{x}$ of simplices of $X$ and whose morphisms $\bar{y} \to \bar{x}$ are the commutative diagrams
\begin{displaymath}
\xymatrix@=1em{
\Delta [m] \ar[dr]_{\bar{y} } \ar[rr]^{\alpha } && \Delta [n] \ar[ld]^{\bar{x} } \\
& X
}
\end{displaymath}
whenever $y$ is of degree $m$ and $x$ is of degree $n$. Note that we simplify the notation slightly by writing $\alpha$ in place of $N\alpha$, where $\alpha :[m]\to [n]$ must by definition be an operator such that $y=x\alpha$.

One can view the Kan subdivision of $X$ as
\[Sd\; X\cong colim(B\circ \Upsilon _X),\]
where $\Upsilon _X:\Delta \downarrow X\to sSet$ is the composite of Yoneda embedding $[n]\xmapsto{\Upsilon } \Delta [n]$ with the forgetful functor $(x,n)\mapsto [n]$. A simplicial map $f:X\to Y$ gives rise to a functor $\Delta \downarrow f$ such that $\Upsilon _X=\Upsilon _Y\circ \Delta \downarrow f$. In particular, the identity is a natural transformation
\[\Upsilon _X\Rightarrow\Upsilon _Y\circ \Delta \downarrow f.\]
From this arises the map $Sd(f):Sd\, X\to Sd\, Y$ in an intuitive way.

Combining the diagram $B\circ \Upsilon _Y$ with its colimit $Sd\, Y$ gives rise to a cocone on $B\circ \Upsilon _Y\circ \Delta \downarrow f$ with apex $Sd\, Y$ and thus a map
\[colim(B\circ \Upsilon _Y\circ \Delta \downarrow f)\to Sd\, Y.\]
The identity natural transformation $\Upsilon _X\Rightarrow\Upsilon _Y\circ \Delta \downarrow f$ gives rise to a natural transformation
\[B\circ \Upsilon _X\Rightarrow B\circ \Upsilon _Y\circ \Delta \downarrow f,\]
which must be the identity as well. Thus the map above with target $Sd\, Y$ can be considered to have $Sd\, X$ as its source. The map itself is denoted $Sd(f)$.

We can take the viewpoint that
\[X\cong colim(\Upsilon X)\]
\cite[Lem.~4.2.1~(ii),~p.~141]{FP90}. In other words, the cocone $\Upsilon _X\Rightarrow \underline{X}$, meaning the natural transformation from $\Upsilon _X$ to the constant diagram that takes every object to $X$, is universal. Combining this with $B$ yields a cocone $B\circ \Upsilon _X\Rightarrow \underline{BX}$ with apex $BX$. It gives rise to a canonical map $b_X:Sd\, X\to BX$.
\begin{lemma}
\label{lem:properties_of_b_X}
The canonical map $b_X:Sd\, X\to BX$ is natural, degreewise surjective and an isomorphism if and only if $X$ is non-singular.
\end{lemma}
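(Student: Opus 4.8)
The plan is to treat the three assertions in turn, each resting on the colimit description $Sd\,X\cong colim(B\circ\Upsilon_X)$ together with an identification of $b_X$ as a canonical comparison map. For naturality, I would recall that $Sd$ is the left Kan extension along the Yoneda embedding of the restriction $[n]\mapsto B\Delta[n]$ of $B$; since $B$ is defined on all of $sSet$, it is itself a cocone under the defining diagram, and the universal property of the left Kan extension produces $b\colon Sd\Rightarrow B$ as the unique natural transformation that restricts to the identity on representables. Equivalently, $b_X$ is the canonical map $colim(B\circ\Upsilon_X)\to B(colim\,\Upsilon_X)=BX$ from the colimit of a composite to the outer functor applied to the colimit, and such comparison maps are automatically natural. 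In particular $b_{\Delta[n]}$ is the identity $Sd\,\Delta[n]=B\Delta[n]$, a fact I would record for repeated use.

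For degreewise surjectivity, fix a $k$-simplex of $BX=N(X^\sharp)$, namely a weakly increasing chain $y_0\le\cdots\le y_k$ in $X^\sharp$, and set $p=\dim y_k$ with representing map $\bar y_k\colon\Delta[p]\to X$. The idea is to lift the chain through the top simplex by constructing nonempty subsets $S_0\subseteq\cdots\subseteq S_k=[p]$ with $(\bar y_k)^\sharp(S_i)=y_i$, working by downward induction. Writing $\mu_S$ for the inclusion operator of a subset $S$, suppose $S_{i+1}$ is chosen; then $y_k\mu_{S_{i+1}}=y_{i+1}\sigma$ for a degeneracy $\sigma$ (Eilenberg--Zilber), and since $y_i\le y_{i+1}$ we have $y_i=y_{i+1}\nu_i$ for an injective $\nu_i$. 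Picking a section $\tau$ of $\sigma$, the subset $S_i:=\im(\mu_{S_{i+1}}\tau\nu_i)\subseteq S_{i+1}$ satisfies $y_k\mu_{S_i}=y_{i+1}\sigma\tau\nu_i=y_i$, so $(\bar y_k)^\sharp(S_i)=y_i$. The resulting chain is a $k$-simplex of $B\Delta[p]=Sd\,\Delta[p]$, and pushing it forward along $Sd(\bar y_k)$ and invoking naturality $b_X\circ Sd(\bar y_k)=B(\bar y_k)\circ b_{\Delta[p]}$ shows its image under $b_X$ is exactly $y_0\le\cdots\le y_k$.

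For the final clause I would pass to non-degenerate simplices of $Sd\,X$. Using the standard analysis of the defining colimit (equivalently, of carriers), the non-degenerate $k$-simplices of $Sd\,X$ are the pairs consisting of a non-degenerate simplex $x$ of $X$, of dimension $p$ say, and a strictly increasing chain $T_0\subsetneq\cdots\subsetneq T_k=[p]$, with $b_X$ sending such a simplex to $(\bar x)^\sharp(T_0)\le\cdots\le(\bar x)^\sharp(T_k)$ (top term $x$). If $X$ is singular, some non-degenerate $x$ is not embedded; as a degreewise-injective representing map is precisely one injective on vertices, $x$ has a repeated vertex $x\varepsilon_i=x\varepsilon_j$ with $i<j$, and then the two distinct edges $(x,\{i\}\subsetneq[p])$ and $(x,\{j\}\subsetneq[p])$ of $Sd\,X$ share the image $x\varepsilon_i\le x$, so $b_X$ is not injective. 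Conversely, if $X$ is non-singular then each $\bar x$ is a monomorphism, so every restriction $x|_{T_i}$ has distinct vertices and is non-degenerate; hence $(\bar x)^\sharp(T_i)=x|_{T_i}$, distinct subsets give distinct simplices, and $b_X$ carries non-degenerate simplices to strictly increasing, hence non-degenerate, chains injectively (the carrier $x$ is recovered as the top term and the $T_i$ from the $x|_{T_i}$). By uniqueness of the Eilenberg--Zilber factorization, a simplicial map sending non-degenerate simplices injectively to non-degenerate simplices is a monomorphism; combined with the surjectivity already shown, $b_X$ is an isomorphism.

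The crux is the description of the non-degenerate simplices of $Sd\,X$ and the control it gives over the identifications made by the colimit: this is what guarantees, in the singular case, that the two edges above are genuinely distinct in $Sd\,X$, and, in the non-singular case, that $Sd(\bar x)$ is injective, where I would also invoke that $Sd$ preserves degreewise injections \cite[Cor.~4.2.9]{FP90}. Everything else is either formal or a direct manipulation of face and degeneracy operators. I would either establish the carrier description as a preliminary lemma or, for the non-singular direction, cite the identification $Sd\,X\cong BX$ for non-singular $X$ from \cite{WJR13}.
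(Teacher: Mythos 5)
Your argument is correct, but it takes a genuinely different route from the paper: the paper's proof is essentially a pointer, deducing naturality from the Kan-extension description of $Sd$ and then citing \cite[Lem.~2.2.10]{WJR13} for degreewise surjectivity and \cite[Lem.~2.2.11]{WJR13} for the isomorphism criterion, whereas you reprove both substantive claims from first principles. Your surjectivity argument (lifting a chain $y_0\le\cdots\le y_k$ in $X^\sharp$ to a nested family $S_0\subseteq\cdots\subseteq S_k=[p]$ through the top simplex, using Eilenberg--Zilber factorizations and a section of the degeneracy part) and your analysis of the singular/non-singular dichotomy via the two edges $(x,\{i\}\subsetneq[p])$ and $(x,\{j\}\subsetneq[p])$ are sound; the reduction of degreewise injectivity of $\bar{x}$ to injectivity on vertices and the monomorphism criterion via uniqueness of Eilenberg--Zilber factorizations both check out. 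What your version buys is self-containedness and an explicit picture of exactly where $b_X$ fails to be injective on a singular $X$, which is in the spirit of the paper's later use of the comparison map in \cref{cor:two-fold_subdivision_strom}; what it costs is the reliance on the carrier description of the non-degenerate simplices of $Sd\, X$ as pairs $(x,T_0\subsetneq\cdots\subsetneq T_k=[p])$ with $x$ non-degenerate, which you correctly flag as needing either a preliminary lemma (via the skeletal filtration and the fact that $Sd$ preserves colimits and monomorphisms) or a citation --- without it the claim that your two edges are genuinely distinct in the colimit is not yet justified. Given that the paper itself delegates the whole content to \cite{WJR13}, either resolution is acceptable.
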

\begin{proof}
The naturality is automatic when $b_X$ comes from the viewpoint that $Sd$ is the left Kan extension of $B$ along the Yoneda embedding. See \cite[Lem.~2.2.10, p.~38]{WJR13} for the statement and proof that $b_X$ is degreewise surjective. See \cite[Lem.~2.2.11, p.~38]{WJR13} for the statement and proof that $b_X$ is an isomorphism if and only if $X$ is non-singular.
\end{proof}
\noindent We will make use of the comparison map $b_X$ in the proof of the crucial result stated as \cref{cor:two-fold_subdivision_strom}.

As promised, the Kan subdivision creates sieves.
\begin{lemma}\label{lemma_subdivision_creates_left_sieve}
Let $X$ be a simplicial set and $A$ a simplicial subset. Then $\textrm{Sd} \, A$ is an eden in $\textrm{Sd} \, X$.
\end{lemma}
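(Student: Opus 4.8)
The plan is to verify the eden property through the last-vertex characterization of \cref{lem:(co)sieve_characterization_elementary}. Since $Sd$ preserves monomorphisms \cite[Cor.~4.2.9]{FP90}, the map $Sd\, A\to Sd\, X$ is the inclusion of a simplicial subset, so it is meaningful to ask whether $Sd\, A$ is an eden. By \cref{lem:(co)sieve_characterization_elementary} it then suffices to prove that any simplex $\sigma$ of $Sd\, X$ whose last vertex lies in $Sd\, A$ is itself a simplex of $Sd\, A$; note that this single implication also encodes the fullness of $Sd\, A$, so nothing further need be checked. To analyse $\sigma$ I would use the comparison map $b_X:Sd\, X\to BX$ of \cref{lem:properties_of_b_X}, which is natural, degreewise surjective, and an isomorphism on the non-singular standard simplices, together with the colimit presentation of $Sd$.

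Concretely, from the presentation of $X$ as the colimit of its simplices and the fact that $Sd$ preserves colimits \cite[Cor.~4.2.11]{FP90}, I can write an $m$-simplex $\sigma$ as $Sd(\bar{x})(\tau)$ for a representing map $\bar{x}:\Delta[n]\to X$ and some $\tau\in(Sd\,\Delta[n])_m$, and after factoring out degeneracies I may take $x$ non-degenerate. As $\Delta[n]$ is non-singular, $b_{\Delta[n]}$ is an isomorphism, so $\tau$ corresponds to a chain $S_0\subseteq\cdots\subseteq S_m$ of non-empty subsets of $[n]$, that is, to an $m$-simplex of $B\Delta[n]=N((\Delta[n])^\sharp)$. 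Naturality of $b$ and its description on vertices then identify the last vertex $\sigma\varepsilon_m$ with $\bar{x}^\sharp(S_m)$, the non-degenerate part of the restriction of $x$ to the face on $S_m$. The hypothesis that $\sigma\varepsilon_m$ lies in $(Sd\, A)_0=A^\sharp$ therefore says precisely that this non-degenerate simplex lies in $A$. Since every $S_i$ is contained in $S_m$, the chain lies in the subposet $(\Delta[S_m])^\sharp$, so $\tau$ factors as $Sd(\iota)(\tau')$ through the inclusion $\iota:\Delta[S_m]\hookrightarrow\Delta[n]$, giving $\sigma=Sd(\overline{x|_{S_m}})(\tau')$. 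As the non-degenerate part of $x|_{S_m}$ lies in $A$ and $A$ is a simplicial subset, the representing map $\overline{x|_{S_m}}$ factors through $A\hookrightarrow X$; applying $Sd$ places $\sigma$ in the image of $Sd\, A\to Sd\, X$, which is what I want.

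I expect the main obstacle to be the combinatorial heart of the middle step: pinning down that the last vertex $\sigma\varepsilon_m$ is exactly $\bar{x}^\sharp(S_m)$, and that the chain being bounded above by $S_m$ forces $\tau$ to descend to $Sd\,\Delta[S_m]$. Both facts are transported across the isomorphism $b_{\Delta[n]}$ and rely on the compatibility of $b$ with the vertex and face operators. The reduction to non-degenerate $x$ also needs care, so that collapsing degeneracies does not lose the information that the relevant non-degenerate simplex sits in $A$.

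It is worth recording the conceptual content, which meshes with the surrounding machinery. By \cref{lem:sharp_creates_sieves} the poset $A^\sharp$ is a sieve in $X^\sharp$, so by \cref{lem:nerve_of_(co)sieve} the Barratt nerve $BA$ is an eden in $BX$, carrying a characteristic map $\psi:BX\to\Delta[1]$ in the sense of \cref{lem:(co)sieve_characterization_categorical}. The argument above is exactly the statement that the naturality square of $b$ at $A\hookrightarrow X$ is cartesian, i.e.\ that $Sd\, A=b_X^{-1}(BA)$; granting this, pasting of pullbacks shows that $\psi\circ b_X$ is a characteristic map exhibiting $Sd\, A$ as an eden in $Sd\, X$. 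This gives a characteristic-map phrasing of the same proof.
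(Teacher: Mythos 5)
Your proof is correct, but it is packaged differently from the paper's. The paper works with the \emph{categorical} characterization of \cref{lem:(co)sieve_characterization_categorical}: it builds a characteristic map $\chi:Sd\,X\to\Delta[1]$ by assembling, over the simplex category $\Delta\downarrow X$, the nerves of the poset maps $\Delta[n]^\sharp\to[1]$ sending $\mu$ to $0$ exactly when $x\mu$ is a simplex of $A$, checks naturality of this cocone, and then verifies directly that $Sd\,A\to\Delta[0]$ is the base change of $\chi$ along $N\varepsilon_0$. You instead invoke the elementary last-vertex characterization of \cref{lem:(co)sieve_characterization_elementary} and analyse an arbitrary simplex $\sigma=Sd(\bar{x})(\tau)$ through the isomorphism $b_{\Delta[n]}$, reading $\tau$ as a chain $S_0\subseteq\cdots\subseteq S_m$ and restricting along $\Delta[S_m]\hookrightarrow\Delta[n]$. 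The combinatorial core is identical --- in the paper's proof the chain is normalized so that its top element $\varphi(q)$ is the identity of $[n]$ and one concludes $x=x\varphi_q\in A$, which is exactly your ``restrict to the face spanned by $S_m$'' step in different clothing, and both ultimately rest on $A^\sharp$ being a sieve in $X^\sharp$. What your route buys is the avoidance of the naturality verification for $\psi$ and of the pullback bookkeeping; what the paper's route buys is the characteristic map $\chi$ itself as an explicit artifact, in the same categorical form that is used elsewhere (e.g.\ in the triples $(X,A,V)$ of \cref{sec:planes}). Your closing observation that the naturality square of $b$ at $A\hookrightarrow X$ is cartesian, so that $\psi\circ b_X$ serves as a characteristic map, correctly reconciles the two phrasings; just note that, as you present it, the cartesianness of that square is established \emph{by} your direct argument rather than being available independently, so it should remain a corollary and not the engine of the proof. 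The one point worth writing out carefully in a final version is the identification $(Sd\,X)_0\cong X^\sharp$ under which $\sigma\varepsilon_m$ becomes $(x\mu_{S_m})^\sharp$, since the hypothesis of \cref{lem:(co)sieve_characterization_elementary} enters only through it; this is standard and your appeal to the naturality of $b$ on vertices is adequate.
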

\begin{proof}[Proof of Lemma~\ref{lemma_subdivision_creates_left_sieve}]
Let $i:A\to X$ be the inclusion. We will construct a natural transformation
\[B\circ \Upsilon _X\xRightarrow{\psi } \underline{\Delta [1]},\]
which gives rise to a map $\chi :\textrm{Sd} \,X\to \Delta [1]$. Next, we will verify that $Sd\, A\to \Delta [0]$ is a base change of $\chi$ along $N\varepsilon _0$.

Given an object $\bar{x} :\Delta [n]\to X$ of $\Delta \downarrow X$ we define
\[\psi _{\bar{x} }:B(\Delta [n])\to \Delta [1]\]
by letting it be the nerve of $\Delta [n]^\sharp \to [1]$ given by sending an object $\mu$ of $\Delta [n]^\sharp$ to $0$ if $x\mu$ is a simplex of $A$, and to $1$ otherwise.

We verify that the triangle
\begin{displaymath}
\xymatrix@=1em{
 B(\Delta [m]) \ar[dd]_{B(N\alpha )} \ar[dr]^{\psi _{\bar{y} }} \\
 & \Delta [1] \\
 B(\Delta [n]) \ar[ur]_{\psi _{\bar{x} }}
 }
\end{displaymath}
commutes whenever $\alpha$ is such that $y=x\alpha$. To this end, take some face operator $\mu \in \Delta [m]^\sharp$ with target $[m]$. The order-preserving function $(N\alpha )^\sharp$ sends $\mu$ to the face operator $(\alpha \mu )^\sharp$. We can write $y\mu$ as a degeneracy
\[y\mu =x\alpha \mu =x(\alpha \mu )^\sharp (\alpha \mu )^\flat\]
of $x(\alpha \mu )^\sharp$. This means that $y\mu$ is a simplex of $A$ if and only if $x(\alpha \mu )^\sharp$ is a simplex of $A$. In other words, the underlying triangle of posets commutes. Thus $\psi _{\bar{x} }$ is natural, as claimed.

As a result of the previous paragraph we now have the composite natural transformation
\[B\circ \Upsilon _X\Rightarrow \underline{Sd\, X} \Rightarrow \underline{\Delta [1]} \]
between functors $\Delta \downarrow X\to sSet$. This composite induces a composite of natural transformations between functors $\Delta \downarrow A\to sSet$, through precomposition with $\Delta \downarrow i$. By the design of $\psi$, the latter factors through $N\varepsilon _0:\Delta [0]\to \Delta [1]$. This way we obtain a commutative square
\begin{displaymath}
\xymatrix{
B\circ \Upsilon X\circ \Delta \downarrow i \ar@{=>}[d] \ar@{==>}[r] & \underline{\Delta [0]} \ar@{=>}[d]^{\underline{N\varepsilon _0} } \\
\underline{Sd\, X} \ar@{=>}[r] & \underline{\Delta [1]}
}
\end{displaymath}
of natural transformations and thus a candidate $\chi :Sd\, X\to \Delta [1]$ for a characteristic map. It remains to verify that, if given a solid arrow commutative diagram
\begin{displaymath}
 \xymatrix{
 Z \ar@/^2pc/[drr] \ar@{-->}[dr] \ar@/_2pc/[ddr]_f \\
 & Sd\, A \ar[d]_{Sd(i)} \ar[r] & \Delta [0] \ar[d]^{N\varepsilon _0} \\
 & Sd\, X \ar[r]_(.45)\chi & \Delta [1]
 }
\end{displaymath}
then there exists a dashed map $Z\to Sd\, A$ that makes the whole diagram commute. There is at most one such map $Z\to Sd\, A$ as $Sd(i)$ is degreewise injective. Because $\Delta [0]$ is a terminal object it is enough to verify that $f$ factors through $Sd(i)$. As $Sd(i)$ is degreewise injective it suffices to verify that the image of $f$ is contained in the image of $Sd(i)$.

Suppose $z$ a $q$-simplex of $Z$. By the commutativity of the solid arrow diagram, we get that
\[N\varepsilon _0 \circ g(z)=\chi \circ f(z).\]
We argue that $f(z)\in Sd(X)_q$ is in the image of $Sd(i)_q$.

The simplex $f(z)$ is the image of some element $\varphi :[q]\to \Delta [n]^\sharp \in \Upsilon _X(\bar{x} )$ such that $\varphi (q)$ is the identity. Write $\varphi _j=\varphi (j)$ for $0\leq j\leq q$. Because $\chi \circ f(z)$ is in the image of $N\varepsilon _0$, it follows that $x\varphi _j$ is a simplex of $A$ for each $j$ with $0\leq j\leq q$. In particular, the simplex $x\varphi _q$ is a simplex of $A$. The face operator $\varphi _q$ is the identity, so $x=x\varphi _q$ is itself a simplex of $A$. Thus $f(z)$ is in the image of $Sd(i)$.
\end{proof}
\noindent Now we know that a simplicial subset of a simplicial set can always be turned into an eden by applying the Kan subdivision.

\newpage{}
The following term is central.
\begin{definition}\label{def:strom}
A map $k:A\to B$ in $nsSet$ is referred to as a \textbf{Strøm map} if the following conditions hold.
\begin{enumerate}
\item{The map $k$ is a degreewise injective map whose image is an eden in $B$.}
\item{There is an abyss $W$ in $B$ such that $k$ can be factored as $i:A\to W$ followed by the inclusion $j:W\to B$.}
\item{The map $i$ is a section of some map $r:W\to A$.}
\item{The simplicial set $W$ is deformable rel $A$ to $A$ in $W$, namely there exists a simplicial homotopy $\epsilon :W\times \Delta [1]\to W$ such that the diagrams
\begin{displaymath}
\xymatrix{
W \ar[d]_{i_0} \ar[dr]^{ir} && A\times \Delta [1] \ar[dd]_{pr_1} \ar[r]^{i\times1} & W\times \Delta [1] \ar[dd]_{\epsilon } \\
W\times \Delta [1] \ar[r]^\epsilon & W  \\
W \ar[u]^{i_1} \ar[ur]_1 && A \ar[r]^i & W
}
\end{displaymath}
commute.}
\end{enumerate}
\end{definition}
\noindent Notice that the image of $k$ is an eden in $W$.

The class of Str\o m maps is not a category as a composite of Str\o m maps is not necessarily a Str\o m map. This is because the two simplicial homotopies as described in \cref{def:strom} that come with two composable Str\o m maps do not necessarily give rise to a new simplicial homotopy that satisfies the fourth condition of \cref{def:strom}. Compare with class of pseudo-Dwyer maps \cite{Ci99}, which does form a subcategory of $Cat$.

A bit of history may be of interest to the reader. The class of Str\o m maps fills the same role in establishing $nsSet$ as a model category (Quillen equivalent to $sSet$) as the class of \emph{Dwyer maps} in Thomason's paper \cite{Th80}, where $Cat$ is established as a model category (Quillen equivalent to $sSet$). However, a mistake in Thomason's proof intially left the axiom of propriety unproven.

After having established the model structure, Thomason asserted that the Dwyer maps were closed under retracts. As any cofibration was a retract of a Dwyer map, Thomason concluded that any cofibration was a Dwyer map. Therefore, as the nerve functor $N$ took a cocartesian square in $Cat$ with at least one leg Dwyer to a homotopy cocartesian square in $sSet$, it would follow that $Cat$ is left proper. However, the Dwyer maps are not closed under retracts \cite{Ci99}.

This mistake was not a fatal mistake, as it turned out. Cisinski was able to correct the proof of the axiom of propriety by weakening the definition of the term Dwyer map and thus creating a new notion that he gave the ad hoc name \emph{pseudo-Dwyer map}. Perhaps the new notion is better referred to under the name \emph{Cisinski map}. The notion of Cisinski map may have been borrowed from A. Str\o m as it is an analogue to one of his characterizations \cite[Thm.~2~(ii), p.~12]{St66} of the cofibrations for the Str\o m model structure on topological spaces \cite{St72}. It is the model structure whose weak equivalences are the homotopy equivalences and whose fibrations are the Hurewicz fibrations.

Cisinski argues that $N$ takes a cocartesian square in $Cat$ with at least one leg Cisinski to a homotopy cocartesian square in $sSet$ \cite{Ci99}. Thus Thomason's argument that $Cat$ is left proper goes through when Dwyer maps are replaced by Cisinski maps. Cisinski takes the correction one step further and points out that Cisinski maps are closed under cobase change and under taking compositions of $\aleph _0$-sequences \cite{Ci99}. Indeed, Raptis points out that both Dwyer maps and Cisinski maps are closed under (transfinite) compositions \cite[Prop.~2.4.~(a),~p.~216]{Ra10}. Thus, using Thomason's original technique, Thomason's model structure on $Cat$ can be established by means of the term Cisinski map alone, although the notion of Dwyer map plays a role in Thomason's discussion regarding cofibrant objects \cite[Lemma 5.6.~(4),p.~323]{Th80}.

Crucially, the sets $DSd^2(I)$ and $DSd^2(J)$ are contained in the class of Str\o m maps, as we will now argue.
\begin{lemma}\label{lem:Barratt_nerve_of_inclusion_of_non-sing_is_strom}
Let $k:A\to X$ be an inclusion of a simplicial subset $A$ into a non-singular simplicial set $X$. If $A$ is an eden in $X$, then $B(k)$ is a Str\o m map.
\end{lemma}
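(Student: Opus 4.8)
The plan is to push the whole statement into the poset $X^\sharp$ of non-degenerate simplices. By \cref{def:Barratt_nerve}, $B(k)$ is literally the nerve $N(k^\sharp)$ of the inclusion of posets $k^\sharp:A^\sharp\to X^\sharp$, and \cref{lem:nerve_of_(co)sieve} translates the order-theoretic notions of sieve and cosieve into the simplicial notions of eden and abyss. The one structural fact I would distill from the hypothesis is the following. Since $A$ is an eden, \cref{lem:(co)sieve_characterization_elementary} says that a simplex of $X$ belongs to $A$ as soon as its last vertex does; applying this to each edge of a non-degenerate $n$-simplex $x$ shows that the set of vertices of $x$ lying in $A$ is downward closed, hence an initial segment $\{0,\dots,m\}\subseteq[n]$. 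This single observation powers all four clauses of \cref{def:strom}.

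For the first clause, \cref{lem:sharp_creates_sieves} gives that $A^\sharp$ is a sieve in $X^\sharp$, so $BA=N(A^\sharp)$ is an eden in $BX=N(X^\sharp)$ by \cref{lem:nerve_of_(co)sieve}, and $B(k)$ is degreewise injective because $k^\sharp$ is an inclusion of posets. For the second clause I would take $W=N(\mathscr{D})$, where $\mathscr{D}\subseteq X^\sharp$ is the cosieve generated by $A^\sharp$, that is, the set of non-degenerate simplices of $X$ having at least one vertex in $A$. Being the nerve of a cosieve, $W$ is an abyss in $BX$ (\cref{lem:nerve_of_(co)sieve}), and since $A^\sharp\subseteq\mathscr{D}\subseteq X^\sharp$ the map $B(k)$ factors as $BA\xrightarrow{i}W\xrightarrow{j}BX$ with $j$ the inclusion.

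The remaining two clauses are produced by a single order-preserving map. Define $\rho:\mathscr{D}\to A^\sharp$ by sending a non-degenerate simplex $x$ to its front face spanned by the initial $A$-segment $\{0,\dots,m\}$; this face lies in $A$ precisely because its last vertex is the vertex $m$ of $x$, which is in $A$, so edenness applies via \cref{lem:(co)sieve_characterization_elementary}, and it is non-degenerate because $X$ is non-singular (an injective operator applied to an embedded simplex yields an embedded simplex). A short check using the initial-segment description shows $\rho$ is order-preserving and restricts to the identity on $A^\sharp$, so $r=N\rho$ is a retraction with $ri=1$, settling the third clause. For the fourth clause, $\rho(x)$ is always a face of $x$, so the assignments $x\mapsto\rho(x)$ and $x\mapsto x$ are connected by a natural transformation $i\rho\Rightarrow\mathrm{id}_{\mathscr{D}}$ whose component at $x$ is the morphism $\rho(x)\le x$ of $\mathscr{D}$; naturality is automatic because $\mathscr{D}$ is a poset. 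This transformation is a functor $H:\mathscr{D}\times[1]\to\mathscr{D}$, and since $N$ preserves finite products and $\Delta[1]=N[1]$, the map $\epsilon=NH:W\times\Delta[1]\to W$ is the desired deformation: it starts at $ir$, ends at the identity, and is stationary on $BA$ because $\rho$ fixes $A^\sharp$ pointwise.

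The main obstacle, and the only place the eden hypothesis is genuinely used, is verifying that $\rho$ is a well-defined order-preserving map landing in $A^\sharp$. Everything hinges on the initial-segment property: it guarantees that the front $A$-face has its last vertex in $A$ (so \cref{lem:(co)sieve_characterization_elementary} forces it into $A$), and it is exactly what makes $\rho$ compatible with passing to faces. Once $\rho$ is in hand the homotopy costs almost nothing, since in a poset a natural transformation is merely a pointwise comparison with no naturality to check; this is the structural reason the Barratt nerve of an eden inclusion is so well-behaved.
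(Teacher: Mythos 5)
Your proof is correct and follows essentially the same route as the paper: the same abyss $W$ (your ``at least one vertex in $A$'' is equivalent to the paper's ``has a face in $A$''), the same retraction (your front face on the initial $A$-segment is exactly the paper's greatest face lying in $A$), and the same homotopy obtained as the nerve of the pointwise comparison $i r \Rightarrow \mathrm{id}$ in the poset $W$. The initial-segment observation you isolate is just an explicit unpacking of why that greatest face exists, so nothing substantive differs.
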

\begin{proof}
Let $W$ be the subposet of $X^\sharp$ whose objects are precisely the non-degenerate simplices of $X$ that have a face in $A$. As $A$ is an eden it follows that there is a greatest face in $A$ of any given element of $W$. If $w\in W$, we let $r(w)$ denote this unique face. Because $X$ is non-singular it follows that $r(w)$ is non-degenerate, hence an object of $A^\sharp$. Moreover, we get a functor $r:W\to A^\sharp$. It is a retraction of the corestriction $i$ of $k^\sharp :A^\sharp \to X^\sharp$ to $W$.

By \cref{lem:sharp_creates_sieves}, the functor $(-)^\sharp$ creates sieves. Therefore, we get that $A^\sharp$ is a sieve in $X^\sharp$. By the definition of $W$ it follows that it is a cosieve in $X^\sharp$. Furthermore, \cref{lem:nerve_of_(co)sieve} says that $BA=N(A^\sharp )$ is an eden in $BX=N(X^\sharp )$ and that $NW$ is an abyss in $BX$.

If $w\in W$, then there is a morphism $ir(w)\to w$ by the definition of $r$. The rest of the argument is standard. Namely, because $W$ is a poset it is true that $ir(w)\to w$ is automatically natural. This natural morphism from $ir$ to the identity can be viewed as a functor $W\times [1]\to W$, which in turn gives rise to a simplicial homotopy $NW\times \Delta [1]\to NW$ from $Ni\circ Nr$ to the identity as $N$ preserves limits and in particular products. The simplicial homotopy is stationary on $N(A^\sharp )$ because it is identified with the nerve of $W\times [1]\to W$, which is stationary on $A^\sharp$ in an intuitive, analogous sense. This concludes the proof that $B(k)$ is a Str\o m map.
\end{proof}
\begin{corollary}\label{cor:two-fold_subdivision_strom}
Let $Y$ be a simplicial set such that $Sd\, Y$ is non-singular and let $X$ be a simplicial subset of $Y$. If $k:X\to Y$ is the inclusion, then $Sd^2(k)$ is Str\o m.
\end{corollary}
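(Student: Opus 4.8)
The plan is to reduce the two-fold subdivision $Sd^2(k)$ to a Barratt nerve of a single subdivision, and then invoke \cref{lem:Barratt_nerve_of_inclusion_of_non-sing_is_strom}. First I would record that the Kan subdivision preserves degreewise injective maps, so that $Sd(k):Sd\, X\to Sd\, Y$ is again the inclusion of a simplicial subset. Since $Sd\, Y$ is non-singular by hypothesis, and since a simplicial subset of a non-singular simplicial set is itself non-singular, the simplicial set $Sd\, X$ is non-singular as well. Moreover, \cref{lemma_subdivision_creates_left_sieve} applied to the inclusion $X\subseteq Y$ tells us that $Sd\, X$ is an eden in $Sd\, Y$.

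With these observations in place, I would apply \cref{lem:Barratt_nerve_of_inclusion_of_non-sing_is_strom} to the inclusion $Sd(k):Sd\, X\to Sd\, Y$, whose target is non-singular and whose source is an eden. The conclusion is that $B(Sd(k))$ is a Str\o m map. It then remains to identify $B(Sd(k))$ with $Sd^2(k)$.

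The identification proceeds through the comparison map $b$ of \cref{lem:properties_of_b_X}. Because $Sd\, X$ and $Sd\, Y$ are both non-singular, the maps $b_{Sd\, X}:Sd^2 X\to B(Sd\, X)$ and $b_{Sd\, Y}:Sd^2 Y\to B(Sd\, Y)$ are isomorphisms. Naturality of $b$ provides a commutative square whose top edge is $Sd^2(k)$, whose bottom edge is $B(Sd(k))$, and whose vertical edges are these two isomorphisms. Hence $Sd^2(k)$ and $B(Sd(k))$ are isomorphic as objects of the arrow category of $nsSet$.

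Finally I would note that being a Str\o m map is preserved under such an isomorphism: given the eden, the abyss $W$, the retraction $r$ and the deformation $\epsilon$ witnessing that $B(Sd(k))$ is Str\o m, one transports all of this data across the two isomorphisms $b_{Sd\, X}$ and $b_{Sd\, Y}$ to obtain the corresponding data for $Sd^2(k)$. I expect this last transport to be the only place requiring care, though it is entirely routine; the conceptual content of the argument is the recognition that $Sd^2\cong B\circ Sd$ on non-singular simplicial sets, which converts the outer subdivision into a Barratt nerve to which \cref{lem:Barratt_nerve_of_inclusion_of_non-sing_is_strom} applies directly.
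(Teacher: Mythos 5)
Your proposal is correct and follows essentially the same route as the paper's proof: apply \cref{lemma_subdivision_creates_left_sieve} to see that $Sd\,X$ is an eden in $Sd\,Y$, invoke \cref{lem:Barratt_nerve_of_inclusion_of_non-sing_is_strom} to conclude that $B(Sd(k))$ is Str\o m, and then identify $B(Sd(k))$ with $Sd^2(k)$ via the natural isomorphism $b$ on non-singular simplicial sets. The only difference is that you spell out the (routine) transport of the Str\o m data across the isomorphism of arrows, which the paper leaves implicit.
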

\begin{proof}
According to \cref{lemma_subdivision_creates_left_sieve}, we have that $Sd\, X$ is an eden in $Sd\, Y$. By \cref{lem:Barratt_nerve_of_inclusion_of_non-sing_is_strom} we now know that $BSd(k)$ is Str\o m. The naturality of $b_{Sd\, X}$ means that we can identify $BSd(k)$ with $Sd^2(k)$ via the diagram
\begin{displaymath}
\xymatrix{
Sd^2\, X \ar[d]_{Sd(Sd(k))} \ar[r]^{b_{Sd\, X}}_\cong & BSd\, X \ar[d]^{B(Sd(k))} \\
Sd^2\, Y \ar[r]_{b_{Sd\, Y}}^\cong & BSd\, Y
}
\end{displaymath}
as $Sd\, X$ and $Sd\, Y$ are non-singular. This is because the natural map from the Kan subdivision to the Barratt nerve is an isomorphism when the original simplicial set is non-singular \cite[Lem.~2.2.11, p.~38]{WJR13}. Hence, the map $Sd^2(k)$ is a Str\o m map.
\end{proof}
\noindent In particular, if $k$ in \cref{cor:two-fold_subdivision_strom} is one of the inclusions $\partial \Delta [n]\to \Delta [n]$ or one of the inclusions $\Lambda ^j[n]\to \Delta [n]$, then we see that $Sd^2(k)$ is a Str\o m map.

\section{On higher and lower planes of existence}
\label{sec:planes}

\noindent \cref{cor:two-fold_subdivision_strom} has shown us that the sets $DSd^2(I)$ and $DSd^2(J)$ are both contained in the class of Str\o m maps. This class of maps will serve as an auxiliary class of maps that aids us in establishing the model structure.

To form a pushout in $nsSet$ one can first form the pushout in $sSet$ and then desingularize it. The desingularization process destroys the homotopy type in general, but it turns out that the homotopy type is preserved when the pushout in $sSet$ is taken along a Str\o m map. This result is stated as \cref{lem:Pushout_along_strom_homotopically_wellbehaved}. The important formal property of Str\o m maps is that they are preserved under taking cobase change, which is stated as \cref{prop:Strom-maps_closed_under_cobasechange}. To prove both of these results, the most work intensive task is to establish \cref{prop:desingularizing_after_collapsing_elysium}, which we will focus on in this section. It helps us control the homotopical behavior of desingularization in important cases.

As a preliminary step towards proving that Str\o m maps are preserved under cobase change, we have the following basic result.
\begin{lemma}\label{lem:elysiums_abysses_preserved_cobase_change}
If the square
\begin{displaymath}
\xymatrix{
A \ar[d]_i \ar[r]^f & C \ar[d]^j \\
X \ar[r]_(.35)g & X\sqcup _AC
}
\end{displaymath}
is cocartesian in $sSet$ and $i$ embeds $A$ as an eden (resp. abyss) in $X$ then $j$ embeds $C$ as an eden (resp. abyss) in $X\sqcup _AC$.
\end{lemma}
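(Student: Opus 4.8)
The plan is to produce a characteristic map for $C$ directly, using the categorical characterization of edens and abysses in \cref{lem:(co)sieve_characterization_categorical}. I treat the eden case; the abyss case is entirely dual, replacing $N\varepsilon _0$ by $N\varepsilon _1$ and the vertex $0$ by the vertex $1$ throughout. Since $i$ embeds $A$ as an eden in $X$, that lemma furnishes a characteristic map $\chi _X:X\to \Delta [1]$ whose pullback along $N\varepsilon _0:\Delta [0]\to \Delta [1]$ is exactly $A$; the goal is to build an analogous $\chi :X\sqcup _AC\to \Delta [1]$ whose pullback along $N\varepsilon _0$ is exactly $C$.

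First I would construct $\chi$ from the universal property of the pushout. Write $c_0:C\to \Delta [1]$ for the constant map at the vertex $0$, namely the composite $C\to \Delta [0]\xrightarrow{N\varepsilon _0} \Delta [1]$. The cartesian square for $A$ in $X$ forces $\chi _X\circ i$ to factor through $N\varepsilon _0$, so $\chi _X\circ i$ is the constant map $A\to \Delta [1]$ at the vertex $0$; this is also the value of $c_0\circ f$. Hence $\chi _X$ and $c_0$ agree on $A$, and the pushout yields a unique $\chi :X\sqcup _AC\to \Delta [1]$ with $\chi \circ g=\chi _X$ and $\chi \circ j=c_0$. I would also record at this point that $j$ is a monomorphism: colimits in $sSet$ are computed degreewise, $i$ is degreewise injective, and the degreewise pushout of an injection along an arbitrary map of sets is again an injection, so each $j_n$ is injective.

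The main step is to verify that the square with left edge $j$, top edge $C\to \Delta [0]$, and bottom edge $\chi$ is cartesian, i.e. that $C$ is precisely the preimage $\chi ^{-1}(0)$ of the subobject $\{0\}=\im (N\varepsilon _0)$. Because $N\varepsilon _0$ is monic, this pullback is just the simplicial subset of $X\sqcup _AC$ of simplices sent by $\chi$ into $\{0\}$, and it can be checked degreewise on $(X\sqcup _AC)_n=X_n\sqcup _{A_n}C_n$. Every simplex of the pushout is $j_n(c)$ for some $c\in C_n$ or $g_n(x)$ for some $x\in X_n$. The former always satisfies $\chi _n(j_n(c))=(c_0)_n(c)\in \{0\}_n$. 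For the latter, $\chi _n(g_n(x))=(\chi _X)_n(x)$ lies in $\{0\}_n$ if and only if $x\in A_n$, by the cartesian hypothesis on $A$ in $X$; and when $x=i_n(a)$ we have $g_n(x)=j_n(f_n(a))$, so such a simplex already lies in $\im j_n$. Thus $\chi _n^{-1}(\{0\}_n)=\im j_n$, and since $j$ is monic this identifies the pullback with $C$. By \cref{lem:(co)sieve_characterization_categorical}, $j$ embeds $C$ as an eden with characteristic map $\chi$.

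The delicate point, and the only place the eden hypothesis is really used, is the reverse inclusion in the degreewise computation: I must rule out simplices of $X$ that are not in $A$ from landing over the vertex $0$, for otherwise $\chi ^{-1}(0)$ would be strictly larger than $\im j$. This is exactly what the cartesian square for $A\hookrightarrow X$ guarantees. The remaining bookkeeping, that monomorphisms are stable under cobase change and that preimages are computed degreewise, is routine in the presheaf topos $sSet$.
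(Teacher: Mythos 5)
Your proof is correct, but it takes a genuinely different route from the paper's. The paper works entirely with the elementary, simplex-level characterization of \cref{lem:(co)sieve_characterization_elementary}: it factors $f$ as a degreewise surjection followed by a degreewise injection, and in each of the two cases runs an element chase showing that any simplex of $X\sqcup _AC$ whose last vertex lies in $\im j$ already lies in $\im j$. You instead invoke the categorical characterization of \cref{lem:(co)sieve_characterization_categorical}, glue the characteristic map $\chi _X$ of $A$ with the constant map on $C$ via the universal property of the pushout, and verify in a single degreewise computation that the resulting $\chi$ pulls $N\varepsilon _0$ back to exactly $\im j\cong C$. Your approach buys a uniform treatment of an arbitrary attaching map $f$ (no epi--mono factorization and no case split), makes the fullness of $\im j$ automatic, and packages the conclusion in the form that is actually reused later in the paper (e.g.\ in the proof of \cref{prop:Strom-maps_closed_under_cobasechange}, where the characteristic map of the pushed-out eden is needed explicitly); the cost is that it leans on \cref{lem:(co)sieve_characterization_categorical}, whose proof the paper declares straightforward and omits. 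The paper's approach stays at the level of simplices and vertices and needs no auxiliary map to $\Delta [1]$, at the price of the two-case bookkeeping. Both arguments correctly isolate the same crux, namely that a simplex $g(x)$ landing over the vertex $0$ (equivalently, with last vertex in $\im j$) forces $x$ to lie in $A$ because $A$ is an eden in $X$, whence $g(x)=j(f(a))$.
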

\begin{proof}
We do the case when $A$ is an eden. Notice that no part of the proof prefers the case when $A$ is an eden over the case when $A$ is an abyss. Alternatively, use the notion of the opposite \cite[Def.~2.2.19, p.~ 42]{WJR13} of a simplial set to conclude that the result also holds in the case when $A$ is an abyss.

Note that we can factor $f:A\to C$ as a degreewise surjective map followed by a degreewise injective map, so we can prove the lemma by proving that it holds in the two cases when $f$ is degreewise surjective or degreewise injective.

First, we do the case when $f$ is degreewise surjective. Suppose $y$ some simplex of $X\sqcup _AC$ whose last vertex is in the image of $j$. We will prove that $y$ is in the image of $g$. Here, we use the elementary characterization from \cref{lem:(co)sieve_characterization_elementary}.

There is at most one simplex $x$ such that $y=g(x)$. Suppose there is one. As $f$ is surjective in degree $0$, there is a $0$-simplex $v$ of $A$ such that
\[y\varepsilon _n=j\circ f(v)=g\circ i(v)\]
by the assumption that $y\varepsilon _n$ is in the image of $j$. As $i$ embeds $A$ as an eden in $X$, there is a simplex $a$ of $A$ such that $x=i(a)$. Then we can define $c=f(a)$. The given simplex $y$ is the image under $j$ of $c$. It follows that $j$ embeds $C$ as an eden in $X\sqcup _AC$.

Finally, we do the case when $f$ is degreewise injective. Suppose $y$ some simplex of $X\sqcup _AC$ whose last vertex is in the image of $j$. We will prove that $y$ is in the image of $g$.

There is at most one simplex $x$ such that $y=g(x)$. Suppose there is one. The vertex $y\varepsilon _n$ is then uniquely the image under $g$ of $x\varepsilon _n$, in addition to being uniquely the image under $j$ of some $0$-simplex $w$ of $C$. Hence, there is some unique $0$-simplex $v$ of $A$ whose images under $f$ and $i$ are $w$ and $x\varepsilon _n$, respectively. Hence, there is some simplex $a$ of $A$ with $x=i(a)$ by the assumption that $i$ embeds $A$ as an eden in $X$. Thus $y$ is the image under $j$ of $c=f(a)$. It follows that $j$ embeds $C$ as an eden in $X\sqcup _AC$.
\end{proof}
\noindent In addition to \cref{lem:elysiums_abysses_preserved_cobase_change}, we will state some basic properties of cartesian squares.

The properties stated in \cref{Lemma_Pullbacks_close_to_twooutofthree_property} below are here collectively referred to as the two-out-of-three property for cartesian squares. See for example III.4 Exercise 8 (b) in \cite{ML98} for a reference to the first two statements of \cref{Lemma_Pullbacks_close_to_twooutofthree_property} below. All three statements of \cref{Lemma_Pullbacks_close_to_twooutofthree_property} appear in Lemma 2.4 of \cite[p.~57]{CPS06} for the case $\mathscr{C} =sSet$ as Chachólski, Pitsch and Scherer work in that category.
\begin{lemma}[Two-out-of-three property for cartesian squares]
\label{Lemma_Pullbacks_close_to_twooutofthree_property}
Suppose
\begin{displaymath}
\xymatrix{
A \ar[d] \ar[r] & C \ar[d] \ar[r] & E \ar[d] \\
B \ar[r] & D \ar[r] & F
}
\end{displaymath}
a diagram in some category $\mathscr{C}$.
\begin{enumerate}
\item{The outer square is cartesian if both the left hand and the right hand square are cartesian squares.}
\item{Likewise, the left hand square is cartesian if the right hand and outer squares are cartesian.}
\item{If the outer and left hand squares are cartesian, then the right hand square is cartesian if the morphism $B\to D$ has a section.}
\end{enumerate}
\end{lemma}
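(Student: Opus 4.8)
The plan is to prove all three statements by elementary diagram chasing with the universal property of pullbacks, introducing names for the maps: write the top row as $A\xrightarrow{f}C\xrightarrow{g}E$, the bottom row as $B\xrightarrow{f'}D\xrightarrow{g'}F$, and the three vertical maps as $\alpha:A\to B$, $\beta:C\to D$ and $\gamma:E\to F$, so that $\beta f=f'\alpha$ and $\gamma g=g'\beta$. Throughout, a square is cartesian precisely when, for every test object $Z$, each compatible pair of maps out of $Z$ into the two outer corners factors uniquely through the pullback corner.

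Statements (1) and (2) are the classical pasting lemma, and I would dispatch them first. For (1), given a pair $(\phi:Z\to E,\ \chi:Z\to B)$ with $\gamma\phi=g'f'\chi$, I first feed $(\phi,\ f'\chi)$ into the right-hand (cartesian) square, obtaining a unique $\psi:Z\to C$ with $g\psi=\phi$ and $\beta\psi=f'\chi$; then I feed $(\psi,\chi)$ into the left-hand (cartesian) square, obtaining a unique map $Z\to A$ with the required composites. Uniqueness for the outer square follows by running these two factorizations backwards. For (2), I reverse the roles: starting from a pair $(\psi:Z\to C,\ \chi:Z\to B)$ compatible with the left cospan, I push $\psi$ across $g$ to get $(g\psi,\chi)$, which is compatible with the outer cospan since $\gamma g\psi=g'\beta\psi=g'f'\chi$, apply the outer pullback to obtain $t:Z\to A$, and then recover the identity $ft=\psi$ by invoking uniqueness in the right-hand square, whose two projections $g$ and $\beta$ jointly detect equality of maps into $C$.

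The substantive case is (3), where I assume the outer and left-hand squares are cartesian and that $f':B\to D$ admits a section $\sigma:D\to B$ with $f'\sigma=\mathrm{id}_D$. Given a pair $(\phi:Z\to E,\ \rho:Z\to D)$ with $\gamma\phi=g'\rho$, the section is exactly what converts $\rho$ into a map into $B$: set $\chi=\sigma\rho:Z\to B$, and observe $g'f'\chi=g'(f'\sigma)\rho=g'\rho=\gamma\phi$, so $(\phi,\chi)$ is compatible with the outer cospan. The outer square then yields a unique $t:Z\to A$ with $gf\,t=\phi$ and $\alpha t=\chi$, and I take $\psi:=f t:Z\to C$ as the candidate factorization; then $g\psi=\phi$ automatically, while $\beta\psi=\beta f t=f'\alpha t=f'\sigma\rho=\rho$, using the left square's commutativity and the section identity.

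The main obstacle I expect is the uniqueness half of (3), which is precisely where the left-hand hypothesis is needed. Given any competitor $\psi':Z\to C$ with $g\psi'=\phi$ and $\beta\psi'=\rho$, I would manufacture a map into $A$ from it: the pair $(\psi',\ \sigma\rho)$ is compatible with the left cospan because $f'\sigma\rho=\rho=\beta\psi'$, so the left-hand (cartesian) square produces a unique $t':Z\to A$ with $f t'=\psi'$ and $\alpha t'=\sigma\rho$. This $t'$ satisfies the very conditions that characterized $t$ in the outer square, namely $gf\,t'=g\psi'=\phi$ and $\alpha t'=\sigma\rho=\chi$, so uniqueness in the outer square forces $t'=t$ and hence $\psi'=f t'=f t=\psi$. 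Thus the section guarantees existence (by producing the auxiliary leg into $B$) while the left-square hypothesis together with the section guarantees uniqueness; this interplay is the only delicate point, the rest being formal.
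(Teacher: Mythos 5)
Your proof is correct and, for statement (3) -- the only part the paper proves in detail rather than citing -- it follows essentially the same route: use the section to convert the map into $D$ into a map into $B$, apply the outer pullback for existence, and combine the left pullback's universal property with the outer pullback's uniqueness to handle uniqueness. Your write-up of (1) and (2) is the standard pasting-lemma chase, which the paper simply outsources to references, so nothing further is needed.
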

\begin{proof}
Consider the third statement, meaning the case when the left hand and outer squares are cartesian and $k$ has a section, consider the diagram
\begin{displaymath}
\xymatrix{
& X \ar@{-}@/_0.2pc/[d] \ar@{-->}[dr]^\gamma \ar@/^1pc/[drrr]^\epsilon \\
A \ar[d]_f \ar[rr]^(.3)i & \ar@/_/[dr]^(.4)\delta & C \ar[d]^g \ar[rr]^j && E \ar[d]^h \\
B \ar[rr]^k && D \ar@/^1pc/[ll]^s \ar[rr]_l && F
}
\end{displaymath}
in $\mathscr{C}$, where we assume that $h\circ \epsilon =l\circ \delta$.

We will prove the existence and uniqueness of a map $\gamma :X\to C$ such that $\epsilon =j\circ \gamma$ and $\delta =g\circ \gamma$.

First we prove existence. Because the outer square is cartesian and because $s$ is a section of $k$, the two maps $\epsilon$ and $s\circ \delta$ give rise to a map $\alpha :X\to A$ such that
\begin{equation}\label{eq:one_proof_of_Lemma_Pullbacks_close_to_twooutofthree_property}
\epsilon =(j\circ i)\circ \alpha
\end{equation}
and
\begin{equation}\label{eq:two_proof_of_Lemma_Pullbacks_close_to_twooutofthree_property}
s\circ \delta =f\circ \alpha .
\end{equation}
Define $\gamma =i\circ \alpha$. Then (\ref{eq:one_proof_of_Lemma_Pullbacks_close_to_twooutofthree_property}) is the first half of what we need to verify. For the second half, observe that $k$ composed with each side of (\ref{eq:two_proof_of_Lemma_Pullbacks_close_to_twooutofthree_property}) yields
\begin{displaymath}
\begin{array}{rcl}
\delta & = & (k\circ s)\circ \delta \\
& = & k\circ (s\circ \delta ) \\
& = & k\circ (f\circ \alpha ) \\
& = & (k\circ f)\circ \alpha  \\
& = & (g\circ i)\circ \alpha  \\
& = & g\circ (i\circ \alpha ) \\
& = & g\circ \gamma ,
\end{array}
\end{displaymath}
which is the second half of the verification of the existence of $\gamma$.

Finally, we prove uniqueness of $\gamma$. Take two maps $X\to C$, denoted $\gamma$ and $\gamma '$, such that the equations
\begin{displaymath}
\begin{array}{rcl}
\delta & = & g\circ \gamma \\
\delta & = & g\circ \gamma ' \\
\epsilon & = & j\circ \gamma \\
\epsilon & = & j\circ \gamma '
\end{array}
\end{displaymath}
hold. Then the two maps $s\circ \delta$ and $\gamma$ give rise to a canonical map $\alpha :X\to A$ as the left hand square is cartesian. Similarly, the two maps $s\circ \delta$ and $\gamma '$ give rise to a canonical map $\alpha ':X\to A$. Next, we can take advantage of the assumption that the outer square is cartesian. This shows that $\alpha =\alpha '$. Then the equations
\[\gamma =i\circ \alpha =i\circ \alpha '=\gamma '\]
yield the desired uniqueness.
\end{proof}
\noindent Note that the assumption that $B\to D$ is an epimorphism is enough for the third statement of \cref{Lemma_Pullbacks_close_to_twooutofthree_property} to hold for for some categories $\mathscr{C}$. This is trivially true when $\mathscr{C} =Set$ is the category of sets and functions, for the epimorphisms are in that case the surjective functions, which are in turn the functions that have a section.
\begin{corollary}\label{cor:sSet_Pullbacks_close_to_twooutofthree_property}
Suppose
\begin{displaymath}
\xymatrix{
A \ar[d] \ar[r] & C \ar[d] \ar[r] & E \ar[d] \\
B \ar[r] & D \ar[r] & F
}
\end{displaymath}
a diagram in the category $sSet$. If the outer and left hand squares are cartesian, then the right hand square is cartesian if $B\to D$ is degreewise surjective.
\end{corollary}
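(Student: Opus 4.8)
The plan is to deduce this from the third statement of \cref{Lemma_Pullbacks_close_to_twooutofthree_property}, but applied in the category $Set$ rather than in $sSet$, by arguing one simplicial degree at a time. First I would recall that $sSet$ is a functor category, namely the category of functors $\Delta ^{op}\to Set$, so that all limits, and pullbacks in particular, are computed objectwise, that is, degreewise. Consequently a commutative square in $sSet$ is cartesian if and only if, for each $n\geq 0$, the square of $n$-simplices obtained by evaluating at $[n]$ is cartesian in $Set$. Applying this observation to the hypotheses, we find that for every $n\geq 0$ both the outer square and the left hand square of
\begin{displaymath}
\xymatrix{
A_n \ar[d] \ar[r] & C_n \ar[d] \ar[r] & E_n \ar[d] \\
B_n \ar[r] & D_n \ar[r] & F_n
}
\end{displaymath}
are cartesian in $Set$.

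Next I would invoke the hypothesis that $B\to D$ is degreewise surjective, which by definition means that $B_n\to D_n$ is a surjection of sets for each $n$. A surjection of sets splits, so there is a section $s_n:D_n\to B_n$. With such a section at hand, the third statement of \cref{Lemma_Pullbacks_close_to_twooutofthree_property}, instantiated with $\mathscr{C} =Set$, shows that the right hand square of $n$-simplices is cartesian in $Set$. As this holds for every $n$, the degreewise criterion gives that the right hand square is cartesian in $sSet$, which is exactly what we want.

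The subtlety worth flagging, and the reason the corollary is not merely a restatement of the lemma inside $sSet$, is that the sections $s_n$ chosen in different degrees need not commute with the face and degeneracy operators, so they need not assemble into a simplicial map $s:D\to B$. This causes no difficulty, however, because cartesianness in $sSet$ is a purely degreewise condition, and in each fixed degree the mere existence of a set-theoretic section is all that \cref{Lemma_Pullbacks_close_to_twooutofthree_property} requires. Thus no natural section of $B\to D$ is needed, and the degreewise argument suffices.
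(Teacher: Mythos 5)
Your proof is correct and follows essentially the same route as the paper: reduce to $Set$ by computing pullbacks degreewise in the functor category, use that a surjection of sets admits a section, and apply the third statement of \cref{Lemma_Pullbacks_close_to_twooutofthree_property} in each degree. Your remark that the degreewise sections need not assemble into a simplicial map, and that this is harmless because cartesianness is checked degreewise, is a worthwhile clarification that the paper leaves implicit.
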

\begin{proof}
The corollary follows from the third statement of \cref{Lemma_Pullbacks_close_to_twooutofthree_property} in the following way. The category $sSet$ is the category of functors $\Delta ^{op} \to Set$ and natural transformations between them. As a $Set$-valued functor category, the category $sSet$ is bicomplete. In a functor category, limits and colimits are formed pointwise. In other words, we can apply \cref{Lemma_Pullbacks_close_to_twooutofthree_property} in the case when $\mathscr{C} =Set$, in a given degree $n$ as $B_n\to D_n$ is surjective by assumption. The right hand square in degree $n$ is thus cartesian. We can conclude that the right hand square of the given diagram is cartesian in $sSet$
\end{proof}
\noindent Note that \cref{cor:sSet_Pullbacks_close_to_twooutofthree_property} shows that the assumption that $B\to D$ is an epimorphism is sufficient in the case when $\mathscr{C} =sSet$ in \cref{Lemma_Pullbacks_close_to_twooutofthree_property} above.

We are interested in triples $(X,A,V)$ where $X$ is a simplicial set, where $A$ is a non-singular eden in $X$ and where $V$ is a non-singular abyss in $X$. We are particularly interested in two cases. The first is when $A$ is contained in $V$ as this is part of the definition of the term Str\o m map. Secondly, we are interested in the case when $A_0\cup V_0=X_0$ and $A_0\cap V_0=\emptyset$. In this section, we will only consider the second case, however the first case plays a role in the next section.

Notice that if $\chi :X\to \Delta [1]$ is the characteristic map of $A$ as an eden in $X$, then $\chi$ is actually also the characteristic map of $V$ as an abyss in $X$. This is because we are concerned with the special case when $A_0\cup V_0=X_0$ and $A_0\cap V_0=\emptyset$. Therefore, given an $n$-simplex $x$ of $X$ we can consider the diagram
\begin{equation}\label{eq:diagram_proof_of_prop_desingularizing_after_collapsing_elysium}
\begin{gathered}
\xymatrix{
\Delta [k] \ar[d] \ar@{-->}[r] \ar@/^1pc/[rr] & A \ar[d] \ar[r] & \Delta [0] \ar[d]^{N\varepsilon _0} \\
\Delta [n] \ar[r]^{\bar{x} } & X \ar[r]^\chi & \Delta [1] \\
\Delta [n-k-1] \ar[u] \ar@/_1pc/[rr] \ar@{-->}[r] & V \ar[u] \ar[r] & \Delta [0] \ar[u]_{N\varepsilon _1}
}
\end{gathered}
\end{equation}
where we have taken the base changes of $\chi \circ \bar{x}$ along $N\varepsilon _0$ and $N\varepsilon _1$, respectively. Here, we allow $-1\leq k\leq n$ and use the convention $\Delta [-1]=\emptyset$. The vertex $x\varepsilon _j$ is a simplex of $A$ if $j\leq k$ and a simplex of $V$ if $j>k$. The diagram above also illustrates the intuition from \cref{sec:behavior}, which says that a simplex can leave an eden or enter an abyss, but that a simplex can neither enter an eden nor leave an abyss.

Now, consider the case when $x$ is non-degenerate. If $k=-1$, then $x$ is a simplex of $V$, which means that it is embedded in $V$ as $V$ is non-singular. Then $x$ is also embedded in $X$, of course. If $k=n$, then $x$ is a simplex of $A$, which means that it is embedded as $A$ is non-singular. Taking the contrapositive, we get that $k\neq -1$ and that $k\neq n$ if $x$ is not embedded. In particular, it follows that $n>0$ if $x$ is not embedded. But if $n=1$, then $x$ is embedded in the case when $k=0$. This is because $A_0$ and $V_0$ are disjoint and because the vertex $x\varepsilon _0$ is a $0$-simplex of $A$ and because $x\varepsilon _1$ is a $0$-simplex of $V$. So in fact,
\begin{equation}\label{eq:conditions_proof_of_prop_desingularizing_after_collapsing_elysium}
-1\neq k\neq n>1
\end{equation}
when $x$ is non-degenerate and non-embedded.

For the statement of \cref{prop:desingularizing_after_collapsing_elysium}, note that we intend to replace the triple $(X,A,V)$ with the triple $(X/A,\Delta [0],V)$ where $X$ is non-singular. In other words, we specialize quite a lot.
\begin{proposition}\label{prop:desingularizing_after_collapsing_elysium}
Let $X$ be non-singular and $A$ an eden in $X$. Furthermore, consider the cocartesian square
\begin{displaymath}
\xymatrix{
  A \ar[d]_i \ar[r]^f & \Delta [0] \ar[d]^{\bar{\imath} } \\
  X \ar[r]_(.4){\bar{f} } & X/A
}
\end{displaymath}
in $sSet$. If $V$ is the full simplicial subset of $X$ whose $0$-simplices are the ones that are not in $A$, then the composite
\[V\xrightarrow{j} X\xrightarrow{\bar{f} } X/A\xrightarrow{\eta } D(X/A),\]
denoted $\tilde{\jmath }$, is an embedding of $V$ as an abyss in $D(X/A)$.
\end{proposition}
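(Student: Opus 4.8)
The plan is to produce the characteristic map of the alleged abyss directly, then separate the two requirements --- that $\tilde{\jmath}$ be degreewise injective and that its image be an abyss --- handling the abyss part formally and the injectivity by an explicit collapse onto a cone.

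First I would fix the characteristic map $\chi:X\to\Delta[1]$ of $A$ as an eden, supplied by \cref{lem:(co)sieve_characterization_categorical}. As recorded just before the proposition, under the standing hypotheses $A_0\cup V_0=X_0$ and $A_0\cap V_0=\varnothing$ this same $\chi$ is also the characteristic map of $V$ as an abyss, so that $V=\chi^{-1}(1)$ in the pullback sense and $\chi$ sends every vertex of $A$ to $0$. Hence $\chi$ is constant on $A$ and factors as $\chi=\bar{\chi}\circ\bar{f}$ for a unique $\bar{\chi}:X/A\to\Delta[1]$; since $\Delta[1]$ is non-singular, the universal property of desingularization (\cref{def:desing}) factors $\bar{\chi}$ further as $\bar{\chi}=\hat{\chi}\circ\eta$, yielding $\hat{\chi}:D(X/A)\to\Delta[1]$ with $\hat{\chi}\circ\eta\circ\bar{f}=\chi$. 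By \cref{lem:(co)sieve_characterization_categorical} the fibre $\hat{\chi}^{-1}(1)$ is an abyss in $D(X/A)$, and this is my candidate for the image of $\tilde{\jmath}$.

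Two observations then reduce the statement to a single injectivity claim. Since $V_n\cap A_n=\varnothing$ (a simplex of $A$ has its last vertex in $A$, whereas a simplex of $V$ has all its vertices off $A$), the composite $\bar{f}\circ j$ is degreewise injective and identifies $V$ with the full simplicial subset $\bar{f}(V)\subseteq X/A$ on the vertices off the collapsed point; moreover $\bar{f}(V)=\bar{\chi}^{-1}(1)$. Using that $\eta$ is degreewise surjective together with the fullness of $\bar{f}(V)$, I would check that $\im\tilde{\jmath}=\eta(\bar{f}(V))$ coincides with $\hat{\chi}^{-1}(1)$: any simplex $t=\eta(s)$ with $\hat{\chi}(t)$ constant at $1$ has $\bar{\chi}(s)$ constant at $1$, so all vertices of $s$ lie off the basepoint, whence $s\in\bar{f}(V)$ by fullness and $t\in\eta(\bar{f}(V))$. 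Thus the only remaining point is that $\tilde{\jmath}$ be degreewise injective, equivalently that $\eta$ be injective on $\bar{f}(V)$; granting this, $\tilde{\jmath}$ is an isomorphism of $V$ onto the abyss $\hat{\chi}^{-1}(1)$, which is exactly the assertion.

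This injectivity is the crux, and purely formal pullback manipulation (for instance via \cref{cor:sSet_Pullbacks_close_to_twooutofthree_property}) will not deliver it, since it is a genuine property of $D$. I would instead exhibit a non-singular quotient of $X/A$ on which the $V$-simplices survive. Consider the cone $CV=\Delta[0]\star V$: as $V$ is non-singular and the cone vertex is new, every non-degenerate simplex of $CV$ has pairwise distinct vertices and embedded faces, so $CV$ is non-singular. Define $\tilde{w}:X\to CV$ by sending a simplex $x$, whose vertices split along $\chi$ into an initial block in $A$ and a final block in $V$, to the cone point on the $A$-block together with the $V$-face of $x$ on the remaining block. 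The position of this cut is dictated by $\chi$ and is therefore natural with respect to the operators, so $\tilde{w}$ is simplicial; by construction it is constant on $A$ and restricts to the inclusion $V\hookrightarrow CV$. Being constant on $A$, the map $\tilde{w}$ factors through $\bar{f}$, and since $CV$ is non-singular it factors further through $\eta$, giving $\bar{w}:D(X/A)\to CV$ with $\bar{w}\circ\tilde{\jmath}$ equal to the inclusion $V\hookrightarrow CV$. As the latter is degreewise injective, so is $\tilde{\jmath}$. Assembling this with the previous paragraph, $\tilde{\jmath}$ embeds $V$ onto the abyss $\hat{\chi}^{-1}(1)$, which completes the proof.
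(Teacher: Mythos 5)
Your proof is correct, but it takes a genuinely different route from the paper's. The paper proves \cref{prop:desingularizing_after_collapsing_elysium} by a transfinite induction: it builds $D(X/A)$ as the colimit of a $\lambda$-sequence of intermediate quotients, collapsing one non-embedded non-degenerate simplex at a time by pushing out along a degeneracy operator, checking at every stage that $\Delta[0]$ stays an eden and $V$ stays an abyss, and finally invoking a cardinality bound to show the process halts. You instead (a) obtain the candidate characteristic map $\hat{\chi}:D(X/A)\to\Delta[1]$ purely formally from the universal properties of the quotient and of desingularization, which reduces the whole statement to degreewise injectivity of $\tilde{\jmath}$, and (b) prove that injectivity by mapping everything into an explicitly constructed non-singular target, the cone $\Delta[0]\star V$, through which $\eta\circ\bar{f}$ factors and whose restriction to $V$ is the inclusion. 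This is substantially shorter and avoids the transfinite machinery altogether; what it gives up is the stage-by-stage description of exactly which identifications $\eta_{X/A}$ performs, but the proposition as stated (and its later uses in \cref{lem_tool_for_proving_lem_Strom-maps_closed_under_cobasechange}) only need the abyss conclusion, so your argument suffices. Two small points you leave implicit and should spell out: the naturality of the cut position (which holds because $\chi\circ\bar{x}$ is an operator $[n]\to[1]$, so the vertices of $x$ lying in $A$ form an initial segment and the splitting of an operator $\alpha$ at the cut is compatible with the join structure), and the fact that a non-degenerate simplex whose vertices are pairwise distinct is automatically embedded (needed to conclude that $\Delta[0]\star V$ is non-singular); both are standard and are also used implicitly in the paper's own proof.
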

\noindent Notice that $V$ is an abyss in $X$ as $A$ is an eden. It is even true that $V$ is an abyss in $X/A$. If the latter statement is not clear at this time, it will be early in the proof. Thus the triple $(X/A,\Delta [0],V)$ is indeed a specialization from the previous paragraphs.

Recall from the fact that $nsSet$ is a reflective subcategory of $sSet$ that one can make the square from \cref{prop:desingularizing_after_collapsing_elysium} cocartesian in $nsSet$ by desingularizing the pushout $X/A$. Let $\tilde{\imath }$ denote the composite of the canonical map $X/A\xrightarrow{\eta } D(X/A)$ with $\bar{\imath }$. Let $\bar{\jmath } =\bar{f} \circ j$.

The triple $(X,\Delta [0],V)$ is a form of world order, where the eden $\Delta [0]$ can be thought of as a higher plane of existence and the abyss $V$ as a lower plane. A simplex of $X/A$ is thought of as living in this world in the manner explained by the diagram (\ref{eq:diagram_proof_of_prop_desingularizing_after_collapsing_elysium}) and the conditions of (\ref{eq:conditions_proof_of_prop_desingularizing_after_collapsing_elysium}).

We will make use of the following terminology.
\begin{definition}\label{def:hty_sequence}
If $\lambda$ is an ordinal, then a \textbf{$\lambda$-sequence} in a cocomplete category $\mathscr{C}$ is a cocontinous functor $X:\lambda \to \mathscr{C}$, written as
\begin{displaymath}
\xymatrix{
X^{[0]} \ar[r] & X^{[1]} \ar[r] & \cdots \ar[r] & X^{[\beta ]} \ar[r] & \cdots \; ,
}
\end{displaymath}
$\beta <\lambda$. The canonical map
\[X^{[0]}\to colim_{\beta <\lambda }X^{[\beta ]}\]
is the \textbf{composition} of the $\lambda$-sequence. A \textbf{sequence} is a $\lambda$-sequence for some ordinal $\lambda$.
\end{definition}
\noindent For sequences, we sometimes use the same letters that at other times denote simplicial sets. However, we use the brackets in the notation to avoid confusion with skeleton filtrations. This is because $X^n$, $n\geq 0$, denotes the $n$-skeleton of a simplicial set $X$. Also recall that we have taken $X_n$, $n\geq 0$, to mean the set of $n$-simplices of a simplicial set $X$. Both of the two latter notations are standard.

Next, we prove the proposition.
\begin{proof}[Proof of \cref{prop:desingularizing_after_collapsing_elysium}]
We will desingularize the simplicial set $X/A$ in an iterative manner. Each non-embedded non-degenerate simplex of $X/A$ will be made degenerate.

The method we use is similar to how G. Lewis Jr. makes a $k$-space compactly generated by identifying two points whenever they cannot be separated by open sets \cite[p.~158]{Le78}.

Our method is also a modification of \cite[Thm.~2.1.3.]{Fj20-DN}. Moreover, the simplicial set $X/A$ is quite special as it is formed by collapsing an eden within a non-singular simplicial set. This makes it viable to deal with one non-embedded non-degenerate simplex at a time.

Recall that $V$ is defined as the full simplicial subset of $X$ whose $0$-simplices are the ones that are not in $A$. The canonical map $\bar{\imath }$ is by \cref{lem:elysiums_abysses_preserved_cobase_change} an embedding of $\Delta [0]$ as a eden, which says precisely that the first quadrant of the diagram
\begin{displaymath}
 \xymatrix{
  A \ar[d]_i \ar[r] & \Delta [0] \ar[d]^{\bar{\imath } } \ar[r] & \Delta [0] \ar[d]^{N\varepsilon _0} \\
  X \ar@{->>}[r]^(.4){\bar{f} } & X/A \ar@{-->}[r]^{\bar{\chi } } & \Delta [1] \\
  V \ar[u]^j \ar@{-->}[r]_\cong \ar[ur]_{\bar{\jmath } } & V' \ar[u] \ar[r] & \Delta [0] \ar[u]_{N\varepsilon _1}
 }
\end{displaymath}
is cartesian. This yields the canonical map $\bar{\chi }$. In addition, we have formed the cartesian square in the fourth quadrant, which yields the map $V\to V'$. Next, we will argue that the latter map is an isomorphism.

We start by proving that $V\to V'$ is degreewise surjective. The outer part of the lower half is cartesian and so is the fourth quadrant. By \cref{Lemma_Pullbacks_close_to_twooutofthree_property} it then follows that the third quadrant is also cartesian. Hence, the map $V\to V'$ is a base change of the degreewise surjective map $\bar{f}$. Limits in $sSet$ are computed in each degree, and in the category of sets, a base change of a surjective map is again surjective. We can conclude that $V_q\to V_q'$ is surjective
for each $q\geq 0$.

Next, we argue that $V\to V'$ is degreewise injective. Consider the diagram
\begin{displaymath}
\xymatrix{
 V \ar[d]_j & \emptyset \ar[l] \ar[d] \ar[r] & \Delta [0] \ar[d] \\
 X & A \ar[l]^i \ar[r]_(.45)f & \Delta [0]
 }
\end{displaymath}
which gives rise to a canonical map $V\sqcup \Delta [0]\to X/A$ between pushouts in $SSet$. As $A$ is an eden in $X$ and by the definition of $V$, the images of $i$ and $j$ are disjoint. Hence, the map between pushouts is degreewise injective. In particular, the composite $\bar{\jmath }$ is degreewise injective, implying that $V\to V'$ is. In other words, the canonical map $V\xrightarrow{\cong } V'$ is an isomorphism.

We are ready to begin the iterative desingularization of $X/A$. Let $p^0$ be the canonical degreewise surjective map $X/A\xrightarrow{\eta _{X/A} } D(X/A)$ and write
\[D^{[0]}(X/A)=X/A.\]
Here, we use brackets, because we intend to describe a sequence. This is to make the notation reflect that of  \cref{def:hty_sequence}

Furthermore, write
\begin{displaymath}
\begin{array}{rcl}
i^0 & = & \bar{\imath } \\
j^0 & = & \bar{\imath } \\
\chi ^0 & = & \bar{\chi } .
\end{array}
\end{displaymath}
Assume that we for some ordinal $\gamma >0$ have a $\gamma$-sequence of commutative diagrams
\begin{displaymath}
\xymatrix{
& \Delta [0] \ar[ld]_{\tilde{i} } \ar[d]_{i^\beta } \ar[r] & \Delta [0] \ar[d]^{N\varepsilon _0} \\
D(X/A) & D^{[\beta ]}(X/A) \ar[l]_(.45){p^\beta } \ar[r]^(.6){\chi ^\beta } & \Delta [1] \\
& V \ar[lu]^{\tilde{j} } \ar[u]^{j^\beta } \ar[r] & \Delta [0] \ar[u]_{N\varepsilon _1}
}
\end{displaymath}
for $\beta <\gamma$ where\dots
\begin{enumerate}
\item{\dots the two squares are cartesian, where\dots}
\item{\dots $p^\beta$ is degreewise surjective for each $\beta <\gamma$ and where\dots}
\item{\dots each map $D^{[\alpha ]}(X/A)\xrightarrow{f^{\alpha ,\beta }} D^{[\beta ]}(X/A)$, $0\leq \alpha \leq \beta <\gamma$, is also degreewise surjective.}
\end{enumerate}
By the phrase \emph{$\gamma$-sequence of commutative diagrams} used above we mean a functor from the ordinal $\gamma$ to the category of functors whose source is the category
\begin{displaymath}
\xymatrix{
& 2 \ar[ld] \ar[d] \ar[r] & 1 \ar[d] \\
3 & 6 \ar[l] \ar[r] & 0 \\
& 4 \ar[lu] \ar[u] \ar[r] & 5 \ar[u]
}
\end{displaymath}
and whose target is $sSet$. Thus compatibility of all the maps above is implicit in the hypothesis. We will refer to the commutative diagram with index $\beta$ as the \textbf{$\beta$-th stage} of the (iterative) desingularization process, and even to $D^{[\beta ]}(X/A)$ under the same name.

If a simplicial set is not non-singular, then we say that it is \textbf{singular}. Together with the $\gamma$-sequence, assume that for each ordinal $\beta <\gamma$ such that $D^{[\beta ]}(X/A)$ is singular, we have a simplex $x^\beta$ of $X$ such that $f^{0,\beta }(x^\beta )$ is a non-embedded non-degenerate simplex of $D^{[\beta ]}(X/A)$. Suppose $x^\alpha \neq x^\beta$ whenever $\alpha \neq \beta$. Assume that for each ordinal $\beta$ such that $\beta +1<\gamma$, we have that the simplex $f^{0,\beta +1}(x^\beta )$ of $D^{[\beta +1]}(X/A)$ is degenerate. This data will later be used in proving that the iterative desingularizing process does indeed come to a halt.

If $D^{[\gamma ]}(X/A)$ is singular, then let $x^\gamma$ be a simplex of $X/A$ whose image under $f^{0,\gamma }$ is a non-embedded non-degenerate simplex. Suppose $\beta <\gamma$. Notice that $x^\beta \neq x^\gamma$ as the commutative diagram
\begin{displaymath}
 \xymatrix{
 X/A \ar[dr]_{f^{0,\beta }} \ar[rr]^{f^{0,\gamma }} && D^{[\gamma ]}(X/A) \\
 & D^{[\beta ]}(X/A) \ar[ur]_{f^{\beta ,\gamma }} \ar[rr]_{f^{\beta ,\beta +1}} && D^{[\beta +1]}(X/A) \ar[lu]_{f^{\beta +1,\gamma}}
 }
\end{displaymath}
shows. Namely, we have that
\[f^{\beta ,\gamma }\circ f^{0,\beta }(x^\beta )\]
is degenerate whereas $f^{0,\gamma }(x^\gamma )$ is not. Note that this argument concerns both the case when $\gamma$ is a limit ordinal and the case when $\gamma$ is a successor ordinal. In the latter case, the map $f^{\beta +1,\gamma }$ in the diagram above is potentially the identity, which is ok.

If $\gamma$ is a limit ordinal, then we form the colimit of the $\gamma$-sequence of commutative diagrams. Because colimits in a functor category are computed pointwise \cite[Section~V.3]{ML98}, the colimit is a diagram
\begin{displaymath}
\xymatrix{
& \Delta [0] \ar[ld]_{\tilde{i} } \ar[d]_{i^\gamma } \ar[r] & \Delta [0] \ar[d]^{N\varepsilon _0} \\
D(X/A) & D^{[\gamma ]}(X/A) \ar[l]_(.45){p^\gamma } \ar[r]^(.6){\chi ^\gamma } & \Delta [1] \\
& V \ar[lu]^{\tilde{j} } \ar[u]^{j^\gamma } \ar[r] & \Delta [0] \ar[u]_{N\varepsilon _1}
}
\end{displaymath}
where $D^{[\gamma ]}(X/A)$ is the colimit of the $\gamma$-sequence
\[D^{[0]}(X/A)\xrightarrow{f^{0,1}} \cdots \to D^{[\beta ]}(X/A)\xrightarrow{f^{\beta ,\beta +1}} \cdots\]
where $0\leq \beta$ and $\beta +1<\gamma$. Because the colimit of commutative diagrams is filtered, both of the squares are cartesian as filtered colimits commute with finite limits \cite[Section~IX.2]{ML98}. The canonical map $p^\gamma$ is automatically degreewise surjective as each map $p^\beta$, $\beta <\gamma$, is degreewise surjective. Also it follows that $f^{\alpha ,\gamma}$ is degreewise surjective for $\alpha <\gamma$.

Now comes the real work. That is, we look at the case when $\gamma =\beta +1$ is a successor ordinal. If $D^{[\beta ]}(X/A)$ is non-singular, then we simply copy the $\beta$-th stage and give the copy the index $\beta +1$. The map to the latter from the $\beta$-th diagram then consists of identities. Otherwise, if $D^{[\beta ]}(X/A)$ is singular, then write $y=f^{0,\beta }(x^\beta )$. Assume that $y$ is of degree $n$. Note that we are about to make $y$ degenerate and that $\beta$ may be a limit ordinal. So the following text both finishes the limit ordinal case and takes care of the successor ordinal case of our iteration.

We can take the base change of $\chi ^\beta \circ \bar{y}$ along $N\varepsilon _0$ and $N\varepsilon _1$, respectively, and get the diagram
\begin{equation}
\label{eq:diagram_proof_of_prop_desingularizing_after_collapsing_elysium_special}
\begin{gathered}
\xymatrix{
\Delta [k] \ar[d] \ar@{-->}[r] \ar@/^1pc/[rr] & \Delta [0] \ar[d]_{i^\beta } \ar[r] & \Delta [0] \ar[d]^{N\varepsilon _0} \\
\Delta [n] \ar[r]^(.4){\bar{y} } & D^{[\beta ]}(X/A) \ar[r]^(.6){\chi ^\beta } & \Delta [1] \\
\Delta [n-k-1] \ar[u] \ar@/_1pc/[rr] \ar@{-->}[r] & V \ar[u]^{j^\beta } \ar[r] & \Delta [0] \ar[u]_{N\varepsilon _1}
}
\end{gathered}
\end{equation}
similar to (\ref{eq:diagram_proof_of_prop_desingularizing_after_collapsing_elysium}) with the conditions of (\ref{eq:conditions_proof_of_prop_desingularizing_after_collapsing_elysium}). Thus the vertices $y\varepsilon _0$, $\dots$, $y\varepsilon _k$ are in the image of $i^\beta$ and the vertices $y\varepsilon _{k+1}$, $\dots$, $y\varepsilon _n$ are in the image of $j^\beta$.

Because the source of $i^\beta$ is $\Delta [0]$, we have
\[y\varepsilon _0=\cdots =y\varepsilon _k.\]
This means that the simplex $p^\beta (y)$ of $D(X/A)$ can be written $p^\beta (y)=w\rho$, where $\rho :[n]\to [n-k]$ is the degeneracy operator given by $0,\dots ,k\mapsto 0$. Therefore, to make $y$ degenerate by pushing out along $\rho$ is be a step towards desingularizing $D^{[\beta ]}(X/A)$. We will shortly argue that this step is non-trivial, meaning that $k>0$. In fact, the step is optimal.

Note that the composite
\[\Delta [n]\xrightarrow{\bar{y} } D^{[\beta ]}(X/A)\xrightarrow{\chi ^\beta } \Delta [1]\]
is induced by the operator $[n]\to [1]$ given by
\[0,\dots ,k\mapsto 0\]
and
\[k+1,\dots ,n\mapsto 1.\]
This operator can be factored as $\sigma \circ \rho$ where $\sigma :[n-k]\to [1]$ is given by $0\mapsto 0$ and sending all elements greater than $0$ to $1$.

The remarks of the two previous paragraphs give rise to the $(\beta +1)$-th stage. Consider the diagram
\begin{displaymath}
\xymatrix{
\Delta [n] \ar[dd]_{\bar{y} } \ar[rr]^\rho && \Delta [n-k] \ar[ld] \ar[dd]^{\bar{z} } \ar@/^2pc/[ddddrr]^\sigma \\
& D(X/A) \\
D^{[\beta ]}(X/A) \ar[ur]^{p^\beta } \ar@/_2pc/[ddrrrr]_{\chi ^\beta } \ar[rr]_{f^{\beta ,\beta +1}} && D^{[\beta +1]}(X/A) \ar@{-->}[lu]_{p^{\beta +1}} \ar@{-->}[ddrr]^{\chi ^{\beta +1}} \\
\\
&&&& \Delta [1]
}
\end{displaymath}
where we have formed a cobase change
\[f^{\beta ,\beta +1}:D^{[\beta ]}(X/A)\to D^{[\beta +1]}(X/A)\]
along $\rho$. Here, we have let $\Delta [n-k]\to D(X/A)$ be the map that sends the identity $[n-k]\to [n-k]$ to $p^\beta (y)\mu$, where $\mu :[n-k]\to [n]$ is the section of $\rho$ given by $0\mapsto 0$. The map $\Delta [n-k]\to D(X/A)$ sends $\rho :[n]\to [n-k]$ to
\[(p^\beta (y)\mu )\rho =((w\rho )\mu )\rho =(w(\rho \mu ))\rho =w\rho =p^\beta (y).\]
Thus the solid diagram above commutes and we obtain canonical dashed maps $\chi ^{\beta +1}$ and $p^{\beta +1}$ as indicated. The observation that $p^\beta \circ \bar{y}$ factors through $N\rho$ is essentially a special case of \cite[Prop.~2.3.4.]{Fj20-DN}.

The map $f^{\beta ,\beta +1}$ is degreewise surjective as it is a cobase change of the degreewise surjective map $N\rho$. By the choice of $\rho$, the map $f^{\beta ,\beta +1}$ is a bijection in degree $0$ as the effect of taking the pushout along $\rho$ is trivial in degree $0$. Furthermore, the map $p^{\beta +1}$ is degreewise surjective as $p^\beta$ is. This shows that the second and third of the three conditions associated with the $(\beta +1)$-th stage are satisfied. However, the first remains to be verified.

Pushing out along $N\rho$ is not even useful unless $k>0$, for in that case the map $f^{\beta ,\beta +1}$ is an isomorphism. Moreover, we will, beginning with the next paragraph, argue that the vertices $y\varepsilon _{k+1}$, $\dots$, $y\varepsilon _n$ are pairwise distinct. As $y$ is non-embedded it will then follow that $k>0$. Notice that by the choice of $\rho$, the vertices of $z$ are pairwise distinct if the vertices $y\varepsilon _{k+1}$, $\dots$, $y\varepsilon _n$ are pairwise distinct. Thus it will follow that the simplex $z$ of $D^{[\beta +1]}(X/A)$ is embedded. In other words, to push out along $\rho$ is an optimal step in the desingularization process.

We prove that the vertices $y\varepsilon _{k+1} ,\dots ,y\varepsilon _n$ are pairwise distinct. First, note that the left hand square in the diagram
\begin{displaymath}
\xymatrix{
X \ar[rr]^(.4){f^{0,\beta } \circ \bar{f} } && D^{[\beta ]}(X/A) \ar[rr]^(.55){\chi ^\beta } && \Delta [1] \\
V \ar[u]^j \ar[rr]_{id_V} && V \ar[u]^{j^\beta } \ar[rr] && \Delta [0] \ar[u]_{N\varepsilon _1}
}
\end{displaymath}
is cartesian as both the outer and right hand squares are cartesian. As the map $f^{0,\beta } \circ \bar{f}$ is degreewise surjective, we can take the representing map $\Delta [n]\to X$ of some simplex $\tilde{y}$ of $X$ that $f^{0,\beta } \circ \bar{f}$ sends to $y$ and draw the diagram
\begin{displaymath}
\xymatrix{
\Delta [n] \ar[rr] \ar@/^3pc/[rrrr]^{\bar{y} } && X \ar[rr]^(.4){f^{0,\beta } \circ \bar{f} } && D^{[\beta ]}(X/A) \\
\Delta [n-k-1] \ar[u] \ar@{-->}[rr] \ar@/_2pc/[rrrr] && V \ar[u]^j \ar[rr]^{id_V} && V \ar[u]_{j^\beta }
}
\end{displaymath}

\bigskip
where we have pulled the representing map of $\tilde{y}$ back along $j$.

Note that the simplex $\tilde{y}$ is non-degenerate as $y$ is. Because $X$ is non-singular, it follows that the representing map of $\tilde{y}$ is degreewise injective. Therefore, its base change $\Delta [n-k-1]\to V$ along $j$ is degreewise injective. The outer square is cartesian as the left hand and right hand squares are cartesian. Hence, the composite of the two degreewise injective maps $j^\beta$ and $\Delta [n-k-1]\to V$ represents the $k$-th back face of $y$. Recall that $j^\beta$ is degreewise injective as it by assumption embeds $V$ as an abyss in $D^{[\beta ]}(X/A)$. This concludes our argument that the vertices $y\varepsilon _{k+1} ,\dots ,y\varepsilon _n$ are pairwise distinct. Recall that this implies that the simplex $z$ is embedded.

To form the diagram at the $(\beta +1)$-th stage of the sequence we define $i^{\beta +1}=f^{\beta ,\beta +1}\circ i^\beta$ and $j^{\beta +1}=f^{\beta ,\beta +1}\circ j^\beta$. This means that
\[\tilde{i} =p^\beta \circ i^\beta =(p^{\beta +1}\circ f^{\beta ,\beta +1})\circ i^\beta =p^{\beta +1}\circ (f^{\beta ,\beta +1}\circ i^\beta )=p^{\beta +1}\circ i^{\beta +1}\]
and that
\[\tilde{j} =p^\beta \circ j^\beta =(p^{\beta +1}\circ f^{\beta ,\beta +1})\circ j^\beta =p^{\beta +1}\circ (f^{\beta ,\beta +1}\circ j^\beta )=p^{\beta +1}\circ j^{\beta +1},\]
which shows that we get a diagram
\begin{displaymath}
\xymatrix{
&& \Delta [0] \ar@/_/[lld]_{\tilde{i} } \ar[d]_{i^{\beta +1}} \ar[rr] && \Delta [0] \ar[d]^{N\varepsilon _0} \\
D(X/A) && D^{[\beta +1]}(X/A) \ar[ll]_(.45){p^{\beta +1}} \ar[rr]^(.6){\chi ^{\beta +1}} && \Delta [1] \\
&& V \ar@/^/[llu]^{\tilde{j} } \ar[u]^{j^{\beta +1}} \ar[rr] && \Delta [0] \ar[u]_{N\varepsilon _1}
}
\end{displaymath}
together with a morphism from the $\beta$-th stage. It remains to argue that the two squares on the right are cartesian.

We can form pullbacks $C$ and $V'$ to obtain the diagram
\begin{displaymath}
\xymatrix{
\Delta [0] \ar[d]_{i^\beta } \ar@{-->}[rr] && C \ar[d] \ar[rr] && \Delta [0] \ar[d]^{N\varepsilon _0} \\
D^{[\beta ]}(X/A) \ar[rr]^(.45){f^{\beta ,\beta +1}} && D^{[\beta +1]}(X/A) \ar[rr]^(.55){\chi ^{\beta +1}} && \Delta [1] \\
V \ar[u]^{j^\beta } \ar@{-->}[rr] && V' \ar[u] \ar[rr] && \Delta [0] \ar[u]_{N\varepsilon _1}
}
\end{displaymath}
in which we by \cref{Lemma_Pullbacks_close_to_twooutofthree_property} get that the second and third quadrant are cartesian. The category $sSet$ has the property that a base change of a degreewise surjective map is again degreewise surjective. Consequently, the base changes $\Delta [0]\to C$ and $V\to V'$ of $f^{\beta ,\beta +1}$ must be degreewise surjective. Then $\Delta [0]\to C$ is trivially an isomorphism. In other words, the map $i^{\beta +1}$ is the base change of $N\varepsilon _0$ along $\chi ^{\beta +1}$.

It remains to argue that $V\to V'$ is degreewise injective. For this it suffices to argue that the composite
\[V\xrightarrow{j^\beta } D^{[\beta ]}(X/A)\xrightarrow{f^{\beta ,\beta +1} } D^{[\beta +1]}(X/A)\]
is degreewise injective. Take $m$-simplices $v$ and $w$ in $V$ and suppose
\[f^{\beta ,\beta +1}\circ j^\beta (v) =f^{\beta ,\beta +1}\circ j^\beta (w) .\]
We will prove that $v=w$. As $j^\beta$ is degreewise injective it is enough to prove that $j^\beta (v) =j^\beta (w)$. We can at least say that both of the simplices $j^\beta (v)$ and $j^\beta (w)$ are in the image of the representing map $\bar{y}$ or that $j^\beta (v)=j^\beta (w)$.

If the simplices $j^\beta (v)$ and $j^\beta (w)$ are in the image of $\bar{y}$, then there are operators
\[\alpha _v,\alpha _w:[m]\to [n]\]
such that $y\alpha _v=j^\beta (v)$ and $y\alpha _w=j^\beta (w)$. By our hypothesis we then know that
\begin{displaymath}
\begin{array}{rcl}
(\overline{z} \circ N\rho )\circ N\alpha _v & = & (f^{\beta ,\beta +1} \circ \overline{y} )\circ N\alpha _v \\
& = & f^{\beta ,\beta +1} \circ (\overline{y} \circ N\alpha _v) \\
& = & f^{\beta ,\beta +1} \circ (j^\beta \circ \bar{v} ) \\
& = & f^{\beta ,\beta +1} \circ (j^\beta \circ \bar{w} ) \\
& = & f^{\beta ,\beta +1} \circ (\overline{y} \circ N\alpha _w) \\
& = & (f^{\beta ,\beta +1} \circ \overline{y} )\circ N\alpha _w \\
& = & (\overline{z} \circ N\rho )\circ N\alpha _w.
\end{array}
\end{displaymath}
Given the fact that $z$ is embedded, the equation above implies
\[N\rho \circ N\alpha _v=N\rho \circ N\alpha _w\Rightarrow \rho \alpha _v=\rho \alpha _w.\]
Recall that, by definition, the degeneracy operator $\rho$ is injective on the subset $\{ k+1,\dots ,n\}$ of its source.

Because $y\alpha _v=j^\beta (v)$ is in the image of $j^\beta$, it follows that the image of $\alpha _v$ is contained in $\{ k+1,\dots ,n\}$. Recall the definition of $k$ from the diagram (\ref{eq:diagram_proof_of_prop_desingularizing_after_collapsing_elysium_special}). Similarly, because $y\alpha _w=j^\beta (w)$ is in the image of $j^\beta$, it follows that the image of $\alpha _w$ is contained in $\{ k+1,\dots ,n\}$. The fact that $\rho$ is injective on this subset combined with the equation $\rho \alpha _v=\rho \alpha _w$ yields $\alpha _v=\alpha _w$. This concludes the verification that $j^{\beta +1}$ is base change of $N\varepsilon _1$ along $\chi ^{\beta +1}$ and thus the construction of the $(\beta +1)$-th stage.

It remains to argue that the iterative desingularization process eventually halts. We will use the indices $x^\beta$, $\beta \geq 0$, defined above.

Let $\lambda$ be a cardinal that is strictly greater than the cardinality of $(X/A)^\sharp$. Define $S$ as the set consisting of those $x^\beta$ with $\beta \leq \lambda$. This is a subset of $(X/A)^\sharp$. Then we can consider the injective function $S\to \lambda +1$ defined by $x^\beta \mapsto \beta$. If $\alpha <\beta$, then $x^\alpha$ is defined if $x^\beta$ is. In other words, $\alpha$ is in the image of $S\to \lambda +1$ if $\beta$ is. By the choice of $\lambda$, there is no surjective extension
\begin{displaymath}
\xymatrix@=1em{
S \ar[dd] \ar[dr] \\
& \lambda +1 \\
(X/A)^\sharp \ar@{-->}[ur]_\nexists
}
\end{displaymath}
of $S\to \lambda +1$ to $(X/A)^\sharp$. In other words, $S\to \lambda +1$ cannot possibly be surjective. Hence, the element $\lambda$ is not in the image of the latter function. By the definition of $S$ it follows that $x^\lambda$ is not defined, so the set $S$ contains all simplices of $X/A$ with a designation $x^\beta$. This shows that $D^{[\lambda ]}(X/A)$ is non-singular, so the method we use in order to desingularize $X/A$ does indeed come to a halt.

As a result we get that $p^\lambda :D^{[\lambda ]}(X/A)\xrightarrow{\cong } D(X/A)$ is an isomorphism. Now, the simplicial set $D^{[\lambda ]}(X/A)$ belongs to a diagram that displays $V$ embedded as an abyss in $D^{[\lambda ]}(X/A)$. By design, the composite
\[V\xrightarrow{j^\lambda } D^{[\lambda ]}(X/A)\xrightarrow{p^\lambda } D(X/A)\]
is a factorization of the canonical map $\tilde{\jmath } :V\to D(X/A)$, so this finishes our proof of \cref{prop:desingularizing_after_collapsing_elysium}.
\end{proof}

\section{Properties of Str\o m maps}
\label{sec:properties}

In this section, we will prove that the class of Str\o m maps is closed under cobase change (in $nsSet$), stated as \cref{prop:Strom-maps_closed_under_cobasechange}. Based on this result, we establish \cref{lem:Pushout_along_strom_homotopically_wellbehaved}, which says that to take a pushout along a Str\o m map is a homotopically well behaved operation. The latter will be the key to establishing the model structure on $nsSet$ and to the relationship with the model category of simplicial sets.

First, consider the following lemma.
\begin{lemma}
\label{lem_tool_for_proving_lem_Strom-maps_closed_under_cobasechange}
Suppose $k:A\to B$ the inclusion of an eden $A$ in a non-singular simplicial set $B$ and that $f:A\to C$ is some map in $nsSet$. Assume that there is an abyss $W$ in $B$ that contains $A$. Let $i$ denote the inclusion $A\to W$ and let $j$ denote the inclusion $W\to B$. Then the canonical map
\[B\sqcup _WD(W\sqcup _AC)\xrightarrow{\cong } D(B\sqcup _AC)\]
is an isomorphism.
\end{lemma}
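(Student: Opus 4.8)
The plan is to combine a formal colimit manipulation with the control over desingularization supplied by \cref{prop:desingularizing_after_collapsing_elysium}. Recall that $nsSet$ is reflective in $sSet$, so the desingularization $D$ is a left adjoint and hence preserves pushouts, while it fixes the non-singular objects $B$ and $W$. First I would paste pushout squares in $sSet$: factoring $k$ as $A\xrightarrow{i}W\xrightarrow{j}B$ and using $B\cong B\sqcup_W W$ yields $B\sqcup_A C\cong B\sqcup_W(W\sqcup_A C)$, the outer and lower squares being cocartesian. Applying $D$ and using that it carries this pushout to the corresponding pushout in $nsSet$ gives
\[D(B\sqcup_A C)\cong B\sqcup_W^{nsSet}D(W\sqcup_A C)=D\bigl(B\sqcup_W D(W\sqcup_A C)\bigr),\]
where the inner pushout $Q:=B\sqcup_W D(W\sqcup_A C)$ is formed in $sSet$. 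The canonical map in the statement is then the unit $\eta_Q\colon Q\to DQ$ followed by this isomorphism, so it suffices to prove that $Q$ is non-singular; then $\eta_Q$ is invertible and the claim follows, the identification of the two canonical maps being routine.

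To analyse $Q=B\sqcup_W P'$ with $P':=D(W\sqcup_A C)$, I would use that $sSet$ is a topos: since $j\colon W\to B$ is a monomorphism, the pushout leg $P'\to Q$ is again a monomorphism, so every simplex of $Q$ coming from $P'$ is embedded (as $P'$ is non-singular). As the pushout is computed degreewise, every simplex of $Q$ is the image of a simplex of $B$ or of $P'$, and a non-degenerate simplex represented by some $b$ lying in $W\subseteq B$ lands in $P'$ and is therefore embedded. Hence the only remaining case is a non-degenerate $b\in B$ \emph{not} contained in $W$, and I must show that its image in $Q$ is embedded, i.e.\ has pairwise distinct vertices.

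Here the eden/abyss hypotheses do the work. Because $W$ is an abyss in $B$, the characteristic map of \cref{lem:(co)sieve_characterization_categorical} shows $b\varepsilon_0\notin W$; because $A$ is an eden in $B$, its characteristic map shows the $A$-vertices of $b$ form an initial segment, which must be empty since $b\varepsilon_0\notin W\supseteq A$. Thus $b$ meets $A$ in no vertex, and its vertices split into an initial block lying in $B\setminus W$ and a final block lying in the full simplicial subset $V$ of $W$ on the vertices not in $A$ (again by the abyss characteristic map). The vertices in $B\setminus W$ are not in $W$, hence are untouched by the gluing identifications, while for the final block the key input is that $V$ embeds into $P'$. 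I would obtain this from \cref{prop:desingularizing_after_collapsing_elysium} applied to $(W,A)$: composing $V\to P'$ with $D$ of the projection $W\sqcup_A C\to W/A$ induced by $C\to\Delta[0]$ gives, by naturality of $\eta$, exactly the embedding $V\hookrightarrow W/A\xrightarrow{\eta}D(W/A)$ of $V$ as an abyss, so $V\to P'$ is degreewise injective. Consequently no two vertices of $\iota_B(b)$ are identified, $\iota_B(b)$ is embedded, and $Q$ is non-singular.

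The main obstacle is precisely this last step: controlling which simplices the desingularization $W\sqcup_A C\to D(W\sqcup_A C)$ collapses. The point to get right is that all the collapsing is confined to the eden side (the image of $C$, which contains the image of $A$), whereas the abyss $V=W\setminus A$ survives intact — which is exactly the phenomenon isolated in \cref{prop:desingularizing_after_collapsing_elysium}, here transported from a collapse $W/A$ to the slightly more general pushout $W\sqcup_A C$ by factoring through $W/A$. Everything else (the pasting of pushouts, $D$ preserving colimits, and pushouts of monomorphisms being monomorphisms) is formal.
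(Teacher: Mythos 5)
Your proof is correct and follows essentially the same route as the paper's: both reduce the claim to showing that the $sSet$-pushout $B\sqcup _WD(W\sqcup _AC)$ is non-singular, with the decisive input in each case being that \cref{prop:desingularizing_after_collapsing_elysium}, applied to the pair $(W,A)$ and transported from $W/A$ to $W\sqcup _AC$ by naturality of $\eta$, makes the full simplicial subset of $W$ spanned by the vertices outside $A$ map degreewise injectively into $D(W\sqcup _AC)$. The only difference is presentational: the paper rewrites the pushout as $V\sqcup _{V\cap W}D(W\sqcup _AC)$ (with $V$ the full subcomplex of $B$ on the vertices outside $A$) and invokes that a pushout of non-singular simplicial sets along two degreewise injective legs is non-singular, whereas you verify directly, via the eden/abyss vertex combinatorics, that every non-degenerate simplex of $B\sqcup _WD(W\sqcup _AC)$ is embedded.
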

\noindent The proof of \cref{lem_tool_for_proving_lem_Strom-maps_closed_under_cobasechange} is an adaptation of Thomason's argument on page 315 in his article \cite{Th80} whose purpose is analogous.
\begin{proof}[Proof of \cref{lem_tool_for_proving_lem_Strom-maps_closed_under_cobasechange}.]
Let $V$ denote the full simplicial subset of $B$ whose $0$-simplices are those that are not simplices of $A$. Then $V$ is an abyss in $B$. Consider the square
\begin{displaymath}
\xymatrix{
V\cap W \ar[d] \ar[r] & W \ar[d] \\
V \ar[r] & B
}
\end{displaymath}
in $sSet$. The simplicial set $V\cap W$ is an abyss in both $V$ and $W$. Due to these facts and the fact that $B=V\cup W$, it follows that the square is cocartesian. We put it next to the diagram (\ref{eq:diagram_proof_of_lem_Strom-maps_closed_under_cobasechange_big}). Then we get a canonical isomorphism
\[B\sqcup _WD(W\sqcup _AC)\cong V\sqcup _{V\cap W}D(W\sqcup _AC)\]
between pushouts in $sSet$.

We know from \cref{prop:desingularizing_after_collapsing_elysium} that the canonical map
\[V\cap W\to D(W/A)\]
is an abyss, hence
\[V\cap W\to D(W\sqcup _AC)\]
is degreewise injective.
Therefore, the simplicial set $V\sqcup _{V\cap W}D(W\sqcup _AC)$ is the pushout in $sSet$ of a diagram in which all objects are non-singular and where both legs are degreewise injective, which means that the pushout is itself non-singular. By the universal property of desingularization, it follows that the canonical map
\[B\sqcup _WD(W\sqcup _AC)\xrightarrow{\cong } D(B\sqcup _AC)\]
is an isomorphism.
\end{proof}
\noindent Next, we combine \cref{lem_tool_for_proving_lem_Strom-maps_closed_under_cobasechange} with \cref{prop:desingularizing_after_collapsing_elysium} to establish \cref{prop:Strom-maps_closed_under_cobasechange}.

In the proof of \cref{lem:Pushout_along_strom_homotopically_wellbehaved} below, we will refer to the full strength of \cref{prop:Strom-maps_closed_under_cobasechange} and not just that Str\o m maps are closed under taking cobase change. Hence the slightly awkward formulation of \cref{prop:Strom-maps_closed_under_cobasechange}.
\begin{proposition}\label{prop:Strom-maps_closed_under_cobasechange}
The class of Str\o m maps is closed under taking cobase change (in $nsSet$). Moreover, if $k:A\to B$ is a Str\o m map with factorization
\[A\xrightarrow{i} W\xrightarrow{j} B\]
and if the diagram
\begin{displaymath}
\xymatrix@C=1em{
  A \ar@/_1.5pc/[dd]_k \ar[r]^f \ar[d]^i & C \ar[d]_{\hat{\imath }} \ar@/^3.5pc/[dd]^{\hat{k} } \\
  W \ar[r] \ar[d]^j & D(W\sqcup _AC) \ar[d]_{\hat{\jmath }} \\
  B \ar[r] & D(B\sqcup _AC) \\
}
\end{displaymath}
in $nsSet$ displays $\hat{k}$ as the cobase change of $k$ along some map $f:A\to C$ and $\hat{\imath }$ as the cobase change of $i$ along $f$, then
\[A\xrightarrow{\hat{\imath }} W\xrightarrow{\hat{\jmath } } B\]
is a factorization of $\hat{k}$ as a Str\o m map.
\end{proposition}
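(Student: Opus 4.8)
The plan is to verify, for $\hat{k}:C\to D(B\sqcup_AC)$ together with the candidate abyss $\hat{W}:=D(W\sqcup_AC)$, the four clauses of \cref{def:strom}, in the order abyss, retraction, eden, deformation. Throughout I identify $D(B\sqcup_AC)\cong B\sqcup_WD(W\sqcup_AC)$ via \cref{lem_tool_for_proving_lem_Strom-maps_closed_under_cobasechange}, so that $\hat{\jmath}$ is the canonical leg of this pushout in $sSet$ and $\hat{\imath}=\eta\circ h$, where $\eta:W\sqcup_AC\to\hat{W}$ is the desingularization map and $h:C\to W\sqcup_AC$ is the pushout leg.

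The abyss clause is immediate: the square with legs $W\to\hat{W}$ and $j:W\to B$ is cocartesian in $sSet$ with apex $D(B\sqcup_AC)$, and since $j$ embeds $W$ as an abyss in $B$, \cref{lem:elysiums_abysses_preserved_cobase_change} shows $\hat{\jmath}$ embeds $\hat{W}$ as an abyss. For the retraction clause, the maps $f\circ r:W\to C$ and $\mathrm{id}_C$ agree on $A$ (because $r\circ i=\mathrm{id}_A$), so they induce $\rho:W\sqcup_AC\to C$; as $C$ is non-singular, $\rho$ factors through $\eta$ as $\hat{r}:\hat{W}\to C$, and $\hat{r}\circ\hat{\imath}=\rho\circ h=\mathrm{id}_C$. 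In particular $\hat{\imath}$ is a split monomorphism, hence degreewise injective, and since $\hat{\jmath}$ is an abyss inclusion, $\hat{k}=\hat{\jmath}\circ\hat{\imath}$ is degreewise injective.

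For the eden clause I build a global characteristic map. Let $\chi_B:B\to\Delta[1]$ be the characteristic map of $A$ as an eden in $B$ (\cref{lem:(co)sieve_characterization_categorical}); since $\chi_B$ is constant $0$ on $A$, it glues with the constant map $C\to\Delta[0]\xrightarrow{N\varepsilon_0}\Delta[1]$ to a map $\chi':B\sqcup_AC\to\Delta[1]$ satisfying $\chi'^{-1}(0)=C$, which by \cref{lem:elysiums_abysses_preserved_cobase_change} and \cref{lem:(co)sieve_characterization_categorical} is a characteristic map of $C$ as an eden in $B\sqcup_AC$. As $\Delta[1]$ is non-singular, $\chi'$ descends to $\hat{\chi}:D(B\sqcup_AC)\to\Delta[1]$. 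Because the desingularization map is degreewise surjective and $\chi'^{-1}(0)=C$, every simplex of $\hat{\chi}^{-1}(0)$ lifts to a simplex of $C$, so $\hat{k}(C)=\hat{\chi}^{-1}(0)$; combined with the injectivity of $\hat{k}$ this exhibits the square defining $\hat{\chi}^{-1}(0)$ as cartesian, so by \cref{lem:(co)sieve_characterization_categorical} the image of $\hat{k}$ is an eden in $D(B\sqcup_AC)$.

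The remaining deformation clause is the crux. Since $-\times\Delta[1]$ preserves colimits, $(W\sqcup_AC)\times\Delta[1]\cong(W\times\Delta[1])\sqcup_{A\times\Delta[1]}(C\times\Delta[1])$, and the deformation $\epsilon$ of $W$ together with the constant homotopy on $C$ glue---using that $\epsilon$ is stationary on $A$---to a homotopy $\tilde{\epsilon}:(W\sqcup_AC)\times\Delta[1]\to W\sqcup_AC$ that deforms $W\sqcup_AC$ rel $C$ onto $C$. The difficulty is to descend $\tilde{\epsilon}$ along $\eta$ to a homotopy $\hat{\epsilon}$ on $\hat{W}$: concretely one must show that $\eta\circ\tilde{\epsilon}$ coequalizes the kernel pair of the degreewise surjection $\eta\times\mathrm{id}$, equivalently that the natural comparison $D((W\sqcup_AC)\times\Delta[1])\to\hat{W}\times\Delta[1]$ is invertible on the relevant simplices, after which $\hat{\epsilon}$ is obtained from $D\tilde{\epsilon}$ and the naturality of $\eta$. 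This is where the real work lies: the identifications made by $\eta$ are governed by \cref{prop:desingularizing_after_collapsing_elysium}, and I expect to verify the compatibility by tracking $\tilde{\epsilon}$ through the iterative desingularization stage by stage, exploiting that $\tilde{\epsilon}$ is stationary on the eden $C$ and that each successor stage merely pushes out along a degeneracy merging vertices that the stationary homotopy treats identically. Once $\hat{\epsilon}$ is produced, its endpoint and stationarity conditions in \cref{def:strom} follow from those of $\tilde{\epsilon}$ by the surjectivity of $\eta\times\mathrm{id}$.
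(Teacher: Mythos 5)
Your verification of the abyss, retraction and eden clauses is essentially the paper's argument: the abyss and retraction clauses are handled identically, and for the eden clause you descend the characteristic map of $C$ in $B\sqcup _AC$ along $\eta _{B\sqcup _AC}$ and use its degreewise surjectivity, which is a slightly more hands-on version of the paper's appeal to the two-out-of-three property for cartesian squares (\cref{cor:sSet_Pullbacks_close_to_twooutofthree_property}). The problem is the deformation clause, which you correctly single out as the crux and then do not prove. Saying that you ``expect to verify the compatibility by tracking $\tilde{\epsilon }$ through the iterative desingularization stage by stage'' is a plan, not an argument, and it is not a small omission: the entire difficulty of the fourth condition of \cref{def:strom} is concentrated in exactly the point you defer, namely that the comparison map $D((W\sqcup _AC)\times \Delta [1])\to D(W\sqcup _AC)\times \Delta [1]$ is an isomorphism, equivalently that $\eta \circ \tilde{\epsilon }$ factors through $\eta \times \mathrm{id}$. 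Tracking the homotopy through the transfinite construction of \cref{prop:desingularizing_after_collapsing_elysium} is also not obviously the right tool, since that proposition analyses $D(X/A)$ for $X$ non-singular and $A$ an eden, whereas here one must desingularize the product $(W\sqcup _AC)\times \Delta [1]$, whose non-embedded simplices are not governed by that analysis.

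The paper closes this gap by quoting an external result: the functor $-\times \Delta [1]:nsSet\to nsSet$ preserves pushouts \cite[Cor.~3.1.2.]{FR20}. Granting this, $D(W\sqcup _AC)\times \Delta [1]$ \emph{is} the pushout in $nsSet$ of $W\times \Delta [1]\leftarrow A\times \Delta [1]\to C\times \Delta [1]$, so $\hat{\epsilon }$ is produced directly from the universal property applied to the compatible pair $\eta \circ g\circ \epsilon$ and $\hat{\imath }\circ pr_1$ (which agree on $A\times \Delta [1]$ because $\epsilon$ is a deformation rel $A$); no descent along $\eta \times \mathrm{id}$ and no stage-by-stage bookkeeping is needed, and the endpoint and rel-$C$ conditions follow formally as you indicate. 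So your proposal is structurally sound but incomplete: to finish it you must either prove the comparison isomorphism you identified or cite the preservation of pushouts by $-\times \Delta [1]$ on $nsSet$.
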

\begin{proof}
Consider the commutative diagram
\begin{equation}
\label{eq:diagram_proof_of_lem_Strom-maps_closed_under_cobasechange_big}
\begin{gathered}
\xymatrix@C=1em{
  A \ar@/_1pc/[dd]_k \ar[r]^f \ar[d]^i & C \ar[d]^{\bar{\imath }} \ar[dr]^{\hat{\imath }} \ar[rrr] &&& \Delta [0] \ar[d] \\
  W \ar[r]^(.35)g \ar[d]^j & W\sqcup _A C \ar[d]^{\bar{\jmath }} \ar[r] & D(W\sqcup _AC) \ar[dr]^{\hat{\jmath }} \ar[rr] \ar[d] && D(W/A) \ar[d]\\
  B \ar[r]^(.35)h & B\sqcup _AC \ar@/_2pc/[rr]_{\eta _{B\sqcup _AC}} \ar[r] & B\sqcup _WD(W\sqcup _AC) \ar[r]_(.6)\cong & D(B\sqcup _AC) \ar[r] & D(B/A) \\
}
\end{gathered}
\end{equation}
\\
in $sSet$, where we have used the naturality of $W\sqcup _AC\to D(W\sqcup _AC)$. Because we simplify notation many places, for instance by removing redundant $U$'s, the terms natural and naturality may seem out of place. Nevertheless, it is the category-theoretical notion that is understood. Notice that the cobase change $\hat{k} =\hat{\jmath } \circ \hat{\imath }$ of $k$ in $nsSet$ is present in the diagram, diagonally.

\cref{def:strom} has four conditions that the map $\hat{k}$ must satisfy. We will start by confirming the third, which is that there is a retraction
\[\hat{r} :D(W\sqcup _AC)\to C\]
of $\hat{\imath }$. This is immediate from the existence of the retraction $r:W\to A$ of $i$ as we see in the diagram
\begin{displaymath}
\xymatrix{ 
  A \ar[rr]^f \ar[d]_i && C \ar[d]_{\hat{\imath }} \ar@/^1pc/[ddr]^1 \\
  W \ar[dr]^r \ar[rr]^(.45){\eta \circ g} && D(W\sqcup _AC) \ar@{-->}[dr]_{\hat{r} } \\
  & A \ar[rr]^f && C
}
\end{displaymath}
in $nsSet$ where we make use of the universal property of $D(W\sqcup _AC)$ as a pushout. This concludes our verification of the third condition of \cref{def:strom}.

For the fourth condition of \cref{def:strom} one should be convinced that the functor
\[-\times \Delta [1]:nsSet\to nsSet\]
preserves pushouts, which it does according to \cite[Cor.~3.1.2.]{FR20}. Hence, the simplicial homotopy rel $A$ denoted $\epsilon$ that comes with the Str\o m map $k$ gives rise to a corresponding simplicial homotopy $\hat{\epsilon }$ via the diagram
\begin{equation}
\label{eq:diagram_proof_of_lem_Strom-maps_closed_under_cobasechange_existence_homotopy}
\begin{gathered}
\xymatrix@C-1pc@R-1pc{ 
  A\times \Delta [1] \ar[rr]^{f\times 1} \ar[dd]_{i\times 1} && C\times \Delta [1] \ar[dd]_{\hat{i} \times 1} \ar[dr]^{pr_1} \\
  &&& C \ar[dd]_{\hat{i}} \\
  W\times \Delta [1] \ar[rr]^(.4){(\eta \circ g)\times 1} \ar[dr]^\epsilon && D(W\sqcup _AC)\times \Delta [1] \ar@{-->}[dr]^(.6){\hat{\epsilon } } \\
  & W \ar[rr]_{\eta \circ g} && D(W\sqcup _AC)
}
\end{gathered}
\end{equation}
in $nsSet$. We can expand the diagram by considering the diagram
\begin{displaymath}
\xymatrix{
W \ar[d]^{i_0} & A \ar[l]_i \ar[d]^{i_0} \ar[r]^f & C \ar[d]^{i_0} \\
W\times \Delta [1] & A\times \Delta [1] \ar[l]_{i\times 1} \ar[r]^{f\times 1} & C\times \Delta [1] \\
W \ar[u]_{i_1} & A \ar[l]^i \ar[u]_{i_1} \ar[r]_f & C \ar[u]_{i_1}
}
\end{displaymath}
in $nsSet$. It gives rise to a diagram
\begin{displaymath}
\xymatrix{
D(W\sqcup _AC) \ar[d]_{i_0} \ar[dr] \\
D(W\sqcup _AC)\times \Delta [1] \ar[r]^(.6){\hat{\epsilon } } & D(W\sqcup _AC) \\
D(W\sqcup _AC) \ar[u]^{i_1} \ar[ur]_{id}
}
\end{displaymath}
in which the composite $\hat{\epsilon } \circ i_1$ is the identity. Using the universal property of $D(W\sqcup _AC)$, one can check that the upper diagonal map
\[D(W\sqcup _AC)\to D(W\sqcup _AC)\]
is $\hat{i} \circ \hat{r}$. Thus $\hat{\epsilon }$ is a deformation of $D(W\sqcup _AC)$ to $C$. That the deformation is rel $C$ is immediate from the diagram that defines $\hat{\epsilon }$, namely (\ref{eq:diagram_proof_of_lem_Strom-maps_closed_under_cobasechange_existence_homotopy}). This concludes our verification of the fourth condition of \cref{def:strom}.

We are about to take care of the first and the second condition of \cref{def:strom}. To this end, note that \cref{lem_tool_for_proving_lem_Strom-maps_closed_under_cobasechange} below says that the canonical map
\[B\sqcup _WD(W\sqcup _AC)\xrightarrow{\cong } D(B\sqcup _AC)\]
is an isomorphism. This implies that the map $\hat{\jmath }$ is identified with a map that is a cobase change in $sSet$ of the abyss $j$. Thus $\hat{\jmath }$ is an abyss. In other words, the second condition of \cref{def:strom} holds.

In particular, the map $\hat{\jmath }$ is degreewise injective. Hence, the map $\hat{k}$ is degreewise injective, for it is the composite $\hat{\jmath } \circ \hat{\imath }$. Recall that the map $\hat{\imath }$ is degreewise injective as it is a section of $\hat{r}$.

Finally, we prove that the first condition of \cref{def:strom} holds. By \cref{lem:elysiums_abysses_preserved_cobase_change}, the cobase change $\bar{k} =\bar{\jmath } \circ \bar{\imath }$ in $sSet$ of $k$ is an eden. Furthermore, the characteristic map $\chi :B\sqcup _AC\to \Delta [1]$ of $C$ as an eden in $B\sqcup _AC$ gives rise to a unique map
\[\Psi :D(B\sqcup _AC)\to \Delta [1]\]
such that $\chi =\Psi \circ \eta _{B\sqcup _AC}$ via the universal property of desingularization. We will argue that $\Psi$ is the characteristic map of $C$ as an eden in $D(B\sqcup _AC)$, meaning that $\hat{k}$ is the base change of $N\varepsilon _0$ along $\Psi$.

Suppose we are given a simplicial set $X$ and maps $\beta :X\to B$ and $\gamma :X\to C$ such that
\begin{equation}
\label{eq:proof_first_condition_Strom-maps_closed_under_cobasechange}
\hat{k} \circ \gamma =\eta _{B\sqcup _AC}\circ \beta .
\end{equation}
Consider the solid arrow diagram
\begin{displaymath}
\xymatrix{
X \ar@/_1pc/[ddr]_\beta \ar@{-->}[dr]_\alpha \ar@/^1pc/[drr]^\gamma \\
& C \ar[d]^{\bar{k} } \ar[r]_{id_C} & C \ar[d]^(.45){\hat{k} } \ar[r]_h & \Delta [0] \ar[d]^{N\varepsilon _0} \\
& B\sqcup _AC \ar[r]_(.45){\eta _{B\sqcup _AC}} & D(B\sqcup _AC) \ar[r]^(.6)\Psi & \Delta [1]
}
\end{displaymath}
in $sSet$. Notice from the equations
\begin{displaymath}
\begin{array}{rcl}
N\varepsilon _0\circ h & = & N\varepsilon _0\circ h\circ id_C \\
& = & \chi \circ \bar{k} \\
& = & (\Psi \circ \eta _{B\sqcup _AC})\circ \bar{k} \\
& = & \Psi \circ (\eta _{B\sqcup _AC}\circ \bar{k} ) \\
& = & \Psi \circ (\hat{k} \circ id_C) \\
& = & \Psi \circ \hat{k}
\end{array}
\end{displaymath}
that the right hand square commutes.

We use that the outer square is cartesian to obtain a dashed map $\alpha :X\to C$ such that
\begin{displaymath}
\begin{array}{rcl}
\beta & = & \bar{k} \circ \alpha \\
h\circ \gamma & = & (h\circ id_C)\circ \alpha .
\end{array}
\end{displaymath}
The second equation is uninteresting, but the first combined with (\ref{eq:proof_first_condition_Strom-maps_closed_under_cobasechange}) yields
\[\hat{k} \circ \gamma =\eta _{B\sqcup _AC}\circ \beta =\eta _{B\sqcup _AC}\circ (\bar{k} \circ \alpha )=(\eta _{B\sqcup _AC}\circ \bar{k} )\circ \alpha =\hat{k} \circ \alpha .\]
Thus $\alpha =\gamma$ as $\hat{k}$ is degreewise injective. The degreewise injective maps are the monomorphisms of $sSet$. This shows that the left hand square is cartesian.

Because $\eta _{B\sqcup _AC}$ is degreewise surjective it follows by \cref{cor:sSet_Pullbacks_close_to_twooutofthree_property} that the right hand square is cartesian. In other words, the map $\hat{k}$ is the base change of $N\varepsilon _0$ along $\Psi$. This concludes our verification of the first condition of \cref{def:strom}.
\end{proof}
\noindent The proof of \cref{prop:Strom-maps_closed_under_cobasechange} finishes the technical bulk of this article.

We conclude the section by establishing the following crucial homotopical link between simplicial sets and non-singular simplicial sets. It is an adaptation of the analogous result for Dwyer maps \cite[Prop.~4.3]{Th80}.
\begin{lemma}\label{lem:Pushout_along_strom_homotopically_wellbehaved}
Let $k:A\rightarrow B$ be a Str\o m map and $f:A\rightarrow C$ some map in $nsSet$. If the square
\begin{displaymath}
\xymatrix{
A \ar[d]_k \ar[r]^f & C \ar[d] \\
B \ar[r] & D(UB\sqcup _{UA}UC)
}
\end{displaymath}
is cocartesian in $nsSet$, then the square
\begin{displaymath}
\xymatrix{
UA \ar[d]_{Uk} \ar[r]^{Uf} & UC \ar[d] \\
UB \ar[r] & UD(UB\sqcup _{UA}UC)
}
\end{displaymath}
is homotopy cocartesian in $sSet$.
\end{lemma}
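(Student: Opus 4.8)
The plan is to factor the square through the intermediate object $W$ supplied by the Str\o m structure and to recognize the result as a vertical pasting of two homotopy cocartesian squares. Write $k=j\circ i$ as in \cref{def:strom}, with $i\colon A\to W$ and $j\colon W\to B$ the inclusion of the abyss. By \cref{prop:Strom-maps_closed_under_cobasechange} the cobase change $\hat{k}\colon C\to D(UB\sqcup_{UA}UC)$ is again a Str\o m map, with factorization $C\xrightarrow{\hat{\imath}}D(UW\sqcup_{UA}UC)\xrightarrow{\hat{\jmath}}D(UB\sqcup_{UA}UC)$, and \cref{lem_tool_for_proving_lem_Strom-maps_closed_under_cobasechange} identifies the target with the pushout $UB\sqcup_{UW}UD(UW\sqcup_{UA}UC)$ formed in $sSet$. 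I would record at the outset that $Ui$ and $U\hat{\imath}$ are weak equivalences: each is the inclusion part of a Str\o m map, hence admits a deformation retraction rel its source by the fourth condition of \cref{def:strom}, so its geometric realization is a homotopy equivalence. I would also note that $Ui$ and $Uj$ are degreewise injective, hence cofibrations in $sSet$, since both $k=ji$ and $j$ are degreewise injective.

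Next I would stack two squares vertically, with left column $UA\xrightarrow{Ui}UW\xrightarrow{Uj}UB$ and right column $UC\xrightarrow{U\hat{\imath}}UD(UW\sqcup_{UA}UC)\xrightarrow{U\hat{\jmath}}UD(UB\sqcup_{UA}UC)$, so that the composite square is exactly the one in the statement. For the top square I would argue that, since $Ui$ is a trivial cofibration, the pushout $UW\sqcup_{UA}UC$ computed in $sSet$ is a model for the homotopy pushout of $UW\xleftarrow{Ui}UA\xrightarrow{Uf}UC$, and the cobase change $UC\to UW\sqcup_{UA}UC$ of $Ui$ is again a trivial cofibration. Because the composite $UC\to UW\sqcup_{UA}UC\to UD(UW\sqcup_{UA}UC)$ equals $U\hat{\imath}$, which is a weak equivalence, two-out-of-three forces the desingularization map $UW\sqcup_{UA}UC\to UD(UW\sqcup_{UA}UC)$ to be a weak equivalence; hence the top square is homotopy cocartesian.

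For the bottom square I would invoke \cref{lem_tool_for_proving_lem_Strom-maps_closed_under_cobasechange} to see that it is literally a pushout in $sSet$ along the cofibration $Uj$. As every object of $sSet$ is cofibrant and $sSet$ is left proper, a pushout along a cofibration is a homotopy pushout, so the bottom square is homotopy cocartesian as well. Finally I would apply the pasting law for homotopy cocartesian squares, namely that a vertical composite of two homotopy cocartesian squares is again homotopy cocartesian. This yields the desired conclusion.

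I expect the main obstacle to be bookkeeping rather than conceptual: one must check that the maps of the two stacked squares genuinely compose to the maps of the original square, in particular that the right-hand column composes to $U\hat{k}$ and the left-hand column to $Uk$, and that the identification of the bottom row as a pushout along $Uj$ coming from \cref{lem_tool_for_proving_lem_Strom-maps_closed_under_cobasechange} is compatible with the desingularization map appearing as the bottom edge of the top square. The only genuinely homotopical inputs are that $U\hat{\imath}$ is a weak equivalence, which rides on the fourth Str\o m condition verified inside \cref{prop:Strom-maps_closed_under_cobasechange}, together with the two standard facts that a pushout along a cofibration between cofibrant objects is a homotopy pushout and that homotopy cocartesian squares paste.
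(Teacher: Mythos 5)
Your proof is correct and rests on exactly the same ingredients as the paper's: the Str\o m factorization $k=j\circ i$, \cref{prop:Strom-maps_closed_under_cobasechange} (giving that $U\hat{\imath}$ is a deformation retract inclusion, hence a weak equivalence), and \cref{lem_tool_for_proving_lem_Strom-maps_closed_under_cobasechange} (identifying $D(UB\sqcup_{UA}UC)$ with the pushout of $UD(UW\sqcup_{UA}UC)$ along the cofibration $Uj$). The only difference is organizational: the paper applies the gluing lemma once to the map of pushout diagrams $(B\xleftarrow{k}A\xrightarrow{f}C)\to(B\xleftarrow{j}W\to D(W\sqcup_A C))$ and then invokes the isomorphism from the tool lemma to conclude that $\eta_{B\sqcup_A C}$ is a weak equivalence, whereas you paste two homotopy cocartesian squares --- interchangeable packagings of the same argument.
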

\begin{proof}
We are pedantic in the formulation of the proposition in the hope that the notation will make it clear which pushout belongs in which category. What we will prove is that the canonical map
\[UB\sqcup _{UA}UC\to UD(UB\sqcup _{UA}UC)\]
from the pushout in $sSet$ of the diagram
\[UB\xleftarrow{Uk} UA\xrightarrow{Uf} UC\]
to the pushout in $nsSet$ of the underlying diagram is a weak equivalence in $sSet$. Now, we remove the redundant $U$'s from the notation and proceed.

Suppose $k=j\circ i$ a factorization of $k$ as a Str\o m map. Assume that $\hat{k} =\hat{\jmath } \circ \hat{\imath }$ is the cobase change in $nsSet$ of $k$ along $f$ and that $\hat{\imath }$ is the cobase change in $nsSet$ of $i$ along $f$. By \cref{prop:Strom-maps_closed_under_cobasechange}, it follows that the right hand vertical map in the diagram
\begin{displaymath}
\xymatrix{
	B \ar[d]_1 & A \ar[l]_k \ar[d]_i^\sim \ar[r]^f & C \ar[d]_{\hat{i} }^\sim \\
	B & W \ar[l]_j \ar[r] & D(W\sqcup _AC)
}
\end{displaymath}
in $sSet$ is a weak equivalence. The diagram yields a factorization of
\[\eta _{B\sqcup _AC}:B\sqcup _AC\to D(B\sqcup _AC)\]
as
\[B\sqcup _AC\xrightarrow{\sim } B\sqcup _WD(W\sqcup _AC)\xrightarrow{\cong } D(B\sqcup _AC).\]
Here, the first map is a weak equivalence by the glueing lemma\\ \cite[Prop.~13.3.9, p.~246]{Hi03}. Note that $k$ and $j$ are cofibrations in the standard model structure on $sSet$ as the cofibrations are the degreewise injective maps. The second map is an isomorphism by \cref{lem_tool_for_proving_lem_Strom-maps_closed_under_cobasechange}.
\end{proof}

\section{Lifting conditions}
\label{sec:lifting}

In this section, we finally verify the lifting conditions stated in \cref{thm:lifting_across_adjunction}, in the case when
\[(F,G)=(DSd^2,Ex^2U)\]
and when $sSet$ has the standard model structure. For this and the remaining part of this paper we need some more notation and terminology.

First, the following standard notation is convenient.
\begin{notation}\label{not:injectives_projectives_cofibrations_relations}
If $K$ is a class of maps in some category, then $K-inj$ denotes the class of maps $p$ such that $(i,p)$ is a lifting-extension pair for all members $i$ of $K$. Similarly, we let $K-proj$ denote the class of maps $i$ such that $(i,p)$ is a lifting extension pair for all members $p$ of $K$. Let
\[K-cof=(K-inj)-proj.\]
Expressed another way, the $K$-cofibrations are the maps that have the LLP with respect to the maps that have the RLP with respect to the members of $K$.
\end{notation}
\noindent Whenever one uses Hirschhorn's or Hovey's notion of cofibrantly generated model category, $K$-cof is the class of cofibrations if $K$ is a set of generating cofibrations. Similarly, $K$-cof is the class of trivial cofibrations if $K$ is a set of generating trivial cofibrations.

Suppose $X$ a $\lambda$-sequence for some $\lambda$. If $\mathscr{D}$ is a class of maps in $\mathscr{C}$ and if $X^{[\beta ]}\to X^{[\beta +1]}$ is a member of $\mathscr{D}$ whenever $\beta +1<\lambda$, then we say that $X$ is a \textbf{$\lambda$-sequence of maps in $\mathscr{D}$}. In such a case, consider a choice $f$ of a composition of $X$. We say that $X$ is a \textbf{presentation of $f$ (as a composition of maps in $\mathscr{D}$)} or that $X$ \textbf{presents $f$ (as a composition of maps in $\mathscr{D}$)}.
\begin{definition}\label{def:relative_cell_complex}
Let $K$ be a set of maps in a cocomplete category $\mathscr{C}$. A \textbf{relative $K$-cell complex} is a map that can be presented as a composition of maps in the class of cobase changes of maps taken from the set $K$. The class of relative $K$-cell complexes is denoted $K$-cell.
\end{definition}
\noindent The class of relative $K$-cell complexes, denoted $K$-cell, is a subcategory of $\mathscr{C}$, but it is in fact far more flexible than that, as we now explain.

Any given composition of cobase changes of coproducts of maps from $K$ is a relative $K$-cell complex \cite[Prop.~10.2.14]{Hi03}. Furthermore, any given composition of relative $K$-cell complexes is again a relative $K$-cell complex \cite[Prop.~10.2.15]{Hi03}.

The members of $K$-cof are called \textbf{$K$-cofibrations}. Note that
\[K-cell\subseteq K-cof\]
according to the general theory \cite[Prop.~10.5.10]{Hi03}. The relative $K$-cell complexes, typically, have more in common with the members of $K$ than the $K$-cofibrations have in common with memebers of $K$. This is because the flexibility of $K$-cell tends to make properties of members of $K$ carry over to relative $K$-cell complexes, whereas the same properties can fail to carry over from relative $K$-cell complexes to $K$-cofibrations. If, however, $K$ is a set of generating (resp. trivial) cofibrations for a model category, then the class $K$-cof of (resp. trivial) cofibrations equals the class of retracts of relative $K$-cell complexes \cite[Prop.~11.2.1, p.~211]{Hi03}. The set $K$ is generally thought of as prototypes of the (resp. trivial) cofibrations.

The following terminology will be convenient in the verification of the first condition of \cref{thm:lifting_across_adjunction}.
\begin{definition}\label{def:def_comp_of_strom_maps}
A composition in $nsSet$ of maps in the class of Str\o m maps is referred to as a \textbf{composition of Str\o m maps}.
\end{definition}
\noindent Note that if the members of a certain class have a common name, then we might use that name along the lines of \cref{def:def_comp_of_strom_maps}.

Recall \cref{not:prototypes_cofibr_trivial_cofibr_standard_pre_exist}. The symbol $J-inj$ refers to the class of fibrations in $sSet$ equipped with the standard model structure. Similarly, $I-inj$ is the class of trivial fibrations in $sSet$. Furthermore, $I-cof$ is the class of cofibrations and $J-cof$ is the class of trivial cofibrations in $sSet$. The examples above are immediate from Proposition 11.2.1 in Hirschhorn's book \cite[p.~211]{Hi03}.
\begin{lemma}\label{lem:relative_cell_complexes_degreewise_injective}
Each relative $DSd^2(I)$-cell complex or relative $DSd^2(J)$-cell complex is a composition of Str\o m maps. In particular, every member of each of these classes of relative cell complexes is degreewise injective when viewed as a map in $sSet$.
\end{lemma}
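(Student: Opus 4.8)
The plan is to show that each generator already lies among the Str\o m maps and then to propagate this fact through the two operations—cobase change and composition—out of which relative cell complexes are assembled. First I would fix $k$ to be one of the inclusions $\partial \Delta [n]\to \Delta [n]$ (for $I$) or $\Lambda ^j[n]\to \Delta [n]$ (for $J$). Both the source and target of $k$ are non-singular, and since $Sd$ carries a non-singular simplicial set to a non-singular one (the comparison map $b_X$ of \cref{lem:properties_of_b_X} is then an isomorphism onto a nerve of a poset), the map $Sd^2(k)$ has non-singular source and target. For such a map the unit $\eta _Z\colon Z\to UDZ$ of the reflection is an isomorphism, so $DSd^2(k)$ is canonically identified with $Sd^2(k)$. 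The concluding remark of \cref{cor:two-fold_subdivision_strom} says precisely that $Sd^2(k)$ is a Str\o m map in these two cases, whence every member of $DSd^2(I)$ and of $DSd^2(J)$ is a Str\o m map.

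Next I would invoke \cref{prop:Strom-maps_closed_under_cobasechange}, which gives that any cobase change in $nsSet$ of a Str\o m map is again a Str\o m map. By \cref{def:relative_cell_complex}, a relative $DSd^2(I)$-cell complex (resp. $DSd^2(J)$-cell complex) is presented as a composition in $nsSet$ of maps, each a cobase change of a single member of $DSd^2(I)$ (resp. $DSd^2(J)$). Combining the previous paragraph with \cref{prop:Strom-maps_closed_under_cobasechange}, every map in such a presentation is a Str\o m map, so the relative cell complex is a composition of Str\o m maps in the sense of \cref{def:def_comp_of_strom_maps}. This settles the first assertion.

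For the second assertion I would argue that such a composition is degreewise injective when viewed in $sSet$. Let $X\colon \lambda \to nsSet$ be a $\lambda$-sequence of Str\o m maps presenting the relative cell complex; by the first condition of \cref{def:strom} each transition map $X^{[\beta ]}\to X^{[\beta +1]}$ is degreewise injective between non-singular simplicial sets. The composition in the sense of \cref{def:hty_sequence} is the canonical map $X^{[0]}\to colim_{\beta <\lambda }X^{[\beta ]}$, where the colimit is taken in $nsSet$, that is, it is $D$ applied to the colimit formed in $sSet$. The crucial point I would isolate is that the $sSet$-colimit of a $\lambda$-sequence of monomorphisms between non-singular simplicial sets is itself non-singular: a non-degenerate simplex of the colimit is represented at some stage $X^{[\beta ]}$ by a simplex that must be non-degenerate (monomorphisms reflect degeneracy) and hence embedded, and its representing map composed with the monomorphism $X^{[\beta ]}\to colim$ is degreewise injective, so the simplex is embedded in the colimit. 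Since the $sSet$-colimit is already non-singular, the desingularization step is vacuous, the $nsSet$-composition agrees with the $sSet$-composition, and the latter is the transfinite composition of the monomorphisms $X^{[\beta ]}\to X^{[\beta +1]}$. Checking degreewise in $Set$, where $X^{[0]}_q\to (colim X^{[\beta ]})_q$ is a filtered colimit of injections and therefore injective, shows this composition is degreewise injective.

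The main obstacle is the interaction between colimits in $nsSet$ and colimits in $sSet$: because $nsSet$-colimits are computed by desingularizing the $sSet$-colimit, and the inclusion $U$ does not preserve colimits, degreewise injectivity of a composition of Str\o m maps in $sSet$ is not automatic. The resolution is exactly the stability of non-singularity under transfinite colimits along monomorphisms, isolated above, which makes the desingularization trivial and reduces the claim to the elementary fact that a transfinite composition of monomorphisms is a monomorphism.
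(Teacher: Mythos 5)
Your proposal is correct and its skeleton coincides with the paper's: the generators are Str\o m maps by \cref{cor:two-fold_subdivision_strom}, cobase changes of Str\o m maps are Str\o m by \cref{prop:Strom-maps_closed_under_cobasechange}, and hence every relative cell complex is a composition of Str\o m maps; degreewise injectivity then reduces to comparing the $nsSet$-composition with the $sSet$-composition. The one place you genuinely diverge is that last reduction. The paper disposes of it in one line by citing that $U:nsSet\to sSet$ preserves filtered colimits (an external reference, \cite[Lem.~5.1.2.]{Fj18}), so that $U(j)$ is a transfinite composition of monomorphisms in $sSet$ and therefore a monomorphism. You instead prove the needed instance of that fact from scratch: a $\lambda$-sequence of monomorphisms between non-singular simplicial sets has a non-singular colimit in $sSet$ (a non-degenerate simplex of the colimit lifts to a non-degenerate, hence embedded, simplex at some stage, and the stage maps into the colimit are injective), so desingularization is vacuous at every limit stage and the two compositions agree. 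That argument is sound --- the transfinite induction needed to know each limit stage is again non-singular is implicit but unproblematic --- and it buys self-containedness at the cost of a little length; the paper's citation buys brevity but leans on a result proved elsewhere. Either way the lemma follows.
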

\begin{proof}
The members of $DSd^2(I)$ and $DSd^2(J)$ are Str\o m maps by \cref{cor:two-fold_subdivision_strom}. The class of Str\o m maps is closed under taking cobase change by \cref{prop:Strom-maps_closed_under_cobasechange}. Therefore, any relative $DSd^2(I)$-cell complex or relative $DSd^2(J)$-cell complex is a composition of Str\o m maps.

Let $j$ be a composition of Str\o m maps. Then $U(j)$ is a composition in $sSet$ of degreewise injective maps, as $U:nsSet\to sSet$ preserves filtered colimits. Hence $U(j)$ is itself degreewise injective.
\end{proof}
\noindent With \cref{lem:relative_cell_complexes_degreewise_injective} and the terminology we have so far, we are ready to verify the second condition stated in \cref{thm:lifting_across_adjunction}.

The proof of \cref{prop:second_condition_lifting_criterion} is built on a technique taken from Thomason \cite{Th80}, although more people deserve credit for the ideas that are involved, such as A. Str\o m who worked with characterizations of cofibrations in model structures on topological spaces, and also people developing the theory of neighborhood deformation retracts.
\begin{proposition}\label{prop:second_condition_lifting_criterion}
Let $f$ be a relative $DSd^2(J)$-cell complex. Then $U(f)$ is a weak equivalence.
\end{proposition}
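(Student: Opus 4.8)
The plan is to prove the stronger statement that $U(f)$ is a \emph{trivial cofibration} in $sSet$; the weak-equivalence claim then follows immediately. By \cref{lem:relative_cell_complexes_degreewise_injective} we already know that $U(f)$ is degreewise injective, hence a cofibration, so the real content is the weak-equivalence part, which I would obtain by analyzing a presentation of $f$ one stage at a time and then passing to the transfinite composition. Throughout I would silently identify a pushout formed in $nsSet$ with the desingularization of the corresponding pushout in $sSet$, as recalled at the start of \cref{sec:behavior}.

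First I would settle the generators. Let $g\in DSd^2(J)$, say $g=DSd^2(\iota)$ for a horn inclusion $\iota\colon\Lambda^k[n]\to\Delta[n]$. Since $Sd\,\Delta[n]\cong N(\Delta[n]^\sharp)$ is the nerve of a poset, it is non-singular, and hence so are $Sd^2\Delta[n]$ and its subcomplex $Sd^2\Lambda^k[n]$. Because these are non-singular, the unit $\eta$ of the reflection $(D,U)$ is an isomorphism on them (\cref{lem:properties_of_b_X}), so naturality of $\eta$ exhibits $U(g)$ as isomorphic to $Sd^2(\iota)$. The last vertex map supplies a natural weak equivalence $Sd\Rightarrow\mathrm{id}$, so two-out-of-three shows that $Sd$ preserves weak equivalences; as $\iota$ is a trivial cofibration, $Sd^2(\iota)$ is a weak equivalence, and it is degreewise injective since $Sd$ preserves monomorphisms. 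Thus $U(g)$ is a trivial cofibration in $sSet$.

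Next I would handle the stage maps. Fix a presentation of $f$ as a $\lambda$-sequence $X^{[0]}\to X^{[1]}\to\cdots$ whose successor maps are cobase changes in $nsSet$ of generators $g_\beta\in DSd^2(J)$ along attaching maps $a_\beta\colon A_\beta\to X^{[\beta]}$, so that $X^{[\beta+1]}=D\bigl(UB_\beta\sqcup_{UA_\beta}UX^{[\beta]}\bigr)$. By \cref{cor:two-fold_subdivision_strom} each $g_\beta$ is a Str\o m map, so \cref{lem:Pushout_along_strom_homotopically_wellbehaved}, applied with $(k,f)=(g_\beta,a_\beta)$, says that the comparison map
\[
UB_\beta\sqcup_{UA_\beta}UX^{[\beta]}\xrightarrow{\ \sim\ }UX^{[\beta+1]}
\]
from the pushout formed in $sSet$ to the one formed in $nsSet$ is a weak equivalence. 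On the other hand, the $sSet$-pushout map $UX^{[\beta]}\to UB_\beta\sqcup_{UA_\beta}UX^{[\beta]}$ is the cobase change in $sSet$ of the trivial cofibration $U(g_\beta)$ from the previous paragraph, hence itself a trivial cofibration. Two-out-of-three then forces the stage map $UX^{[\beta]}\to UX^{[\beta+1]}$ to be a weak equivalence, and it is degreewise injective by \cref{lem:relative_cell_complexes_degreewise_injective}, so it is a trivial cofibration.

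Finally I would pass to the colimit. Since $U$ preserves filtered colimits (as used in \cref{lem:relative_cell_complexes_degreewise_injective}), applying $U$ to the given $\lambda$-sequence yields a $\lambda$-sequence in $sSet$ whose composition is $U(f)$ and whose successor maps are all trivial cofibrations. Trivial cofibrations in $sSet$ are closed under transfinite composition (they form $J$-cof in the sense of \cref{not:injectives_projectives_cofibrations_relations}), so $U(f)$ is a trivial cofibration, and in particular a weak equivalence. The main obstacle is the stage-map step: one must correctly interleave the pushout taken in $sSet$ with the pushout taken in $nsSet$, and the whole point of \cref{lem:Pushout_along_strom_homotopically_wellbehaved} is precisely that desingularizing the $sSet$-pushout does not disturb the homotopy type when the attaching is along a Str\o m map; without that input the stage map would in general fail to be a weak equivalence, as the examples in \cref{sec:behavior} illustrate. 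A secondary subtlety is the base-case identification $U(g)\cong Sd^2(\iota)$, which relies on $Sd^2$ of horns and simplices being non-singular so that $D$ acts as the identity on them.
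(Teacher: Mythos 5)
Your proof is correct and follows essentially the same route as the paper's: analyze the $\lambda$-sequence stage by stage, factor each stage map through the pushout in $sSet$, use \cref{lem:Pushout_along_strom_homotopically_wellbehaved} for the comparison map to the desingularized pushout, observe the stage maps are degreewise injective weak equivalences hence in $J$-cof, and conclude by closure of $J$-cof under composition. Your extra paragraph identifying $U(g)$ with $Sd^2(\iota)$ via non-singularity of $Sd^2$ of horns and simplices is a correct elaboration of a step the paper leaves implicit.
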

\begin{proof}
Suppose
\begin{equation}
\label{eq:first_diagram_proof_prop_second_condition_lifting_criterion}
\begin{gathered}
\xymatrix@=0.9em{
A=A^{[0]} \ar@/_/[dr]_(.3)f \ar[r] & A^{[1]} \ar[d] \ar[r] & \dots \ar[r] & A^{[\beta ]}\ar@/^/[lld] \ar[r] & \dots \\
& B=colim_{\beta <\lambda }A^{[\beta ]}
}
\end{gathered}
\end{equation}
a presentation of $f$. By \cref{lem:relative_cell_complexes_degreewise_injective}, the map $f$ is a composition of Str\o m maps. The functor $U$ preserves filtered colimits (say by \cite[Lem.~5.1.2.]{Fj18}), so the $\lambda$-sequence $U\circ A$ is a presentation of $Uf$ as a composition of inclusions of Str\o m maps.

Suppose the diagram
\begin{displaymath}
\xymatrix{
U\Lambda \ar[d]^\sim \ar[r] & UA^{[\beta ]}\ar[d] \ar@/^1pc/[ddr] \\
U\Lambda ' \ar[r] \ar@/_1pc/[drr] & \Lambda '' \ar[dr]^\sim \\
&& UA^{[\beta +1]}
}
\end{displaymath}
in $sSet$ displays $A^{[\beta ]}\to A^{[\beta +1]}$ the way it arises as a cobase change in $nsSet$ of some element $\Lambda \to \Lambda '$ of the set $DSd^2(J)$. Here, the simplicial set $\Lambda ''$ denotes the pushout in $sSet$, $A^{[\beta +1]}$ denotes the pushout in $nsSet$ and the map $\Lambda ''\xrightarrow{\sim } UA^{[\beta +1]}$ is the canonical map, which is a weak equivalence due to \cref{lem:Pushout_along_strom_homotopically_wellbehaved}.

The cobase change $UA^{[\beta ]}\to \Lambda ''$ in $sSet$ is a trivial cofibration as $U\Lambda \to U\Lambda '$ is a trivial cofibration. Consequently, the inclusion $UA^{[\beta ]}\xrightarrow{\sim } UA^{[\beta +1]}$ of the cobase change in $nsSet$ of $\Lambda \to \Lambda '$ is a composite of two weak equivalences and therefore itself a weak equivalence. Moreover, the map $UA^{[\beta ]}\xrightarrow{\sim } UA^{[\beta +1]}$ is degreewise injective as it is the result of applying $U$ to a Str\o m map. Thus we see that it is a trivial cofibration in the model category $sSet$, or in other words that it belongs to $J$-cof. The class $J$-cof is closed under taking compositions \cite[Lem.~10.3.1]{Hi03}. Therefore $U(f)$ is in $J$-cof and is in particular a weak equivalence.
\end{proof}
\noindent \cref{prop:second_condition_lifting_criterion} essentially takes care of the second condition stated in \cref{thm:lifting_across_adjunction}, which leaves the first condition.

Before we verify the first lifting condition, we introduce a bit more terminology.
\begin{definition}\label{def:regular_cardinal}
A cardinal $\kappa$ is said to be \textbf{regular} if, whenever $A$ is a set whose cardinal is less than $\kappa$ and for every $a\in A$ there is a set $S_a$ whose cardinal is less than $\kappa$, then the cardinal of $\bigcup _{a\in A}S_a$ is less than $\kappa$.
\end{definition}
\noindent For example, the countable cardinal $\aleph _0$ is regular \cite[Ex.~10.1.12]{Hi03}. Infinite successor cardinals are regular \cite[Prop.~10.1.14]{Hi03}.
\begin{definition}\label{def:smallness}
Assume that $\mathscr{C}$ is a cocomplete category, $\mathscr{D}$ a subcategory, $A$ an object and $\kappa$ a cardinal. We say that $A$ is \textbf{$\kappa$-small relative to $\mathscr{D}$} if we, for any given regular cardinal $\lambda \geq \kappa$, have that the covariant hom functor $\mathscr{C} (A,-):\mathscr{C} \to Set$ preserves the colimit of any given $\lambda$-sequence
\[X^{[0]}\to \dots \to X^{[\beta ]}\to \dots\]
in $\mathscr{C}$ such that $X^{[\beta ]}\to X^{[\beta +1]}$ is a map of $\mathscr{D}$ whenever $\beta +1<\lambda$. We say that $A$ is \textbf{small relative to $\mathscr{D}$} if it is $\kappa$-small relative to $\mathscr{D}$ for some $\kappa$.
\end{definition}
\noindent We state the following example concerning the category $sSet$.
\begin{example}\label{ex:ssets_small}
If $X$ is a simplicial set and $\kappa$ is the first infinite cardinal that is greater than the cardinal of the set $X^\sharp$ of non-degenerate simplices, then $X$ is $\kappa$-small relative to the subcategory of degreewise injective maps.
\end{example}
\noindent A reference for the fact presented in \cref{ex:ssets_small} is Ex.~10.4.4 from \cite[pp.~194]{Hi03}.

The following remark may be in order.
\begin{remark}
No argument for Hirschhorn's smallness result \cite[Ex.~10.4.4]{Hi03} is presented in his book. A similar statement can be formulated by combining Lemmas $3.1.1$ and $3.1.2$ in Hovey's book \cite[pp.~74]{Ho99}, or rather be extracted from the (sketches of) proofs of those lemmas. However, note that there is a slight difference in how Hirschhorn and Hovey defines smallness.

For comparison of Hovey's and Hirschhorn's notions of smallness, see Def.~2.1.3 in Hovey's book \cite[p.~29]{Ho99} and Def.~10.4.1 in Hirschhorn's book \cite[p.~194]{Hi03}.

The smallness result as stated by Hirschhorn appears weaker than Hovey's. Hirschhorn only claims that simplicial sets are small relative to the subcategory of degreewise injective maps. Hovey sketches a proof of the stronger statement that simplicial sets are small (relative to the category $sSet$ itself). It seems likely that Hovey's sketch can be adapted to Hirschhorn's notion of smallness.
\end{remark}
\noindent As explained, we follow Hirschhorn's treatment of the subject of model categories, including his notion of smallness.

As a consequence of \cref{ex:ssets_small}, we get the following result in our setting.
\begin{lemma}\label{lem:nssets_small}
If $A$ is a non-singular simplicial set and $\kappa$ is the first infinite cardinal that is greater than the cardinal of the set $A^\sharp$ of non-degenerate simplices, then $A$ is $\kappa$-small relative to the subcategory of maps $f$ such that $U(f)$ is degreewise injective.
\end{lemma}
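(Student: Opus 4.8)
The plan is to deduce this from the corresponding fact for simplicial sets, \cref{ex:ssets_small}, by transporting the smallness of $UA$ in $sSet$ back along the full inclusion $U$. The two ingredients that make this work are that $U:nsSet\to sSet$ is full and faithful, so that there is a natural bijection $nsSet(A,-)\cong sSet(UA,U(-))$, and that $U$ preserves filtered colimits (say by \cite[Lem.~5.1.2.]{Fj18}).

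First I would fix a regular cardinal $\lambda \geq \kappa$ and a $\lambda$-sequence
\[
X^{[0]}\to \cdots \to X^{[\beta ]}\to \cdots
\]
in $nsSet$ whose maps lie in the chosen subcategory, that is, such that each $U(X^{[\beta ]}\to X^{[\beta +1]})$ is degreewise injective. Applying $U$ yields a $\lambda$-sequence $UX^{[0]}\to \cdots \to UX^{[\beta ]}\to \cdots$ of degreewise injective maps in $sSet$. Since $\lambda$ is an infinite regular cardinal it is a limit ordinal, so the colimit of this sequence is filtered; as $U$ preserves filtered colimits, the canonical map $U\bigl(colim_{\beta <\lambda }X^{[\beta ]}\bigr)\to colim_{\beta <\lambda }UX^{[\beta ]}$ is an isomorphism in $sSet$.

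Next I would assemble the chain of natural bijections
\begin{align*}
nsSet\bigl(A,colim_{\beta <\lambda }X^{[\beta ]}\bigr)
&\cong sSet\bigl(UA,colim_{\beta <\lambda }UX^{[\beta ]}\bigr)\\
&\cong colim_{\beta <\lambda }sSet\bigl(UA,UX^{[\beta ]}\bigr)\\
&\cong colim_{\beta <\lambda }nsSet\bigl(A,X^{[\beta ]}\bigr),
\end{align*}
where the first and third isomorphisms come from the fullness and faithfulness of $U$ together with the previous paragraph, and the middle isomorphism is precisely the smallness of $UA$ relative to the degreewise injective maps furnished by \cref{ex:ssets_small}. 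The cardinal threshold matches because $U$ is a full inclusion, so $UA$ is literally the same simplicial set as $A$ and the set $(UA)^\sharp$ of non-degenerate simplices coincides with $A^\sharp$; hence the first infinite cardinal above $\abs{A^\sharp}$ is exactly the $\kappa$ that governs the smallness of $UA$ in $sSet$. A routine check that the displayed composite is the canonical comparison map then shows that $nsSet(A,-)$ preserves the colimit, which is what $\kappa$-smallness of $A$ relative to this subcategory demands.

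The only point requiring genuine care is the passage of the colimit through $U$: colimits in $nsSet$ are in general computed by forming the colimit in $sSet$ and then desingularizing, so a priori $U$ need not preserve them. The argument goes through only because the colimits in question are filtered and $U$ does preserve filtered colimits; this is the crux of the proof and the step I would be most careful to cite correctly.
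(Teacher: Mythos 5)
Your proposal is correct and follows essentially the same route as the paper: both transport the smallness of $UA$ furnished by \cref{ex:ssets_small} back along the full inclusion $U$, using that $U$ preserves filtered colimits and that fullness identifies the relevant hom sets. The paper's proof is just a diagrammatic rendering of your chain of bijections, so there is nothing to add.
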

\begin{proof}
Suppose $\lambda \geq \kappa$ regular. Let $X:\lambda \to nsSet$ be a $\lambda$-sequence of maps whose inclusions are degreewise injective. Consider the universal cocone
\begin{equation}
\label{eq:diagram_cocone_on_X_proof_of_lem_nssets_small}
\begin{gathered}
\xymatrix{
X^{[0]} \ar@/_/[dr] \ar[r] & X^{[1]} \ar[d] \ar[r] & \dots \ar[r] & X^{[\beta ]} \ar@/^/[lld] \ar[r] & \dots \\
& colim_{\beta <\lambda }X^{[\beta ]}
}
\end{gathered}
\end{equation}
on $X$. The cocone
\begin{displaymath}
\xymatrix{
UX^{[0]} \ar@/_/[dr] \ar[r] & UX^{[1]} \ar[d] \ar[r] & \dots \ar[r] & UX^{[\beta ]} \ar@/^/[lld] \ar[r] & \dots \\
& U(colim_{\beta <\lambda }X^{[\beta ]} )
}
\end{displaymath}
on $U\circ X$ is universal as the inclusion $U:nsSet\to sSet$ preserves filtered colimits (say by \cite[Lem.~5.1.2.]{Fj18}). We get the diagram
\begin{equation}
\label{eq:diagram_proof_of_lem_nssets_small}
\begin{gathered}
\xymatrix@=0.8em{
sSet(UA,UX^{[0]}) \ar@/_1pc/[dddr] \ar@/_/[dr] \ar[r] & \dots \ar[r] & sSet(UA,UX^{[\beta ]}) \ar@/^/[ld] \ar@/^1pc/[lddd] \ar[r] & \dots \\
& colim _{\beta <\lambda }sSet(UA,UX^{[\beta ]}) \ar@{-->}[dd]^\cong \\
\\
& sSet(UA,U(colim_{\beta <\lambda }X^{[\beta ]}))
}
\end{gathered}
\end{equation}
in the category of sets, where the canonical function is a bijection because $UA$ is $\kappa$-small relative to the subcategory of degreewise injective maps.

We have the equalities
\[sSet(UA,UX^{[\beta ]})=nsSet(A,X^{[\beta ]}),\]
for each $\beta$ with $0\leq \beta <\lambda$, and
\[sSet(UA,U(colim_{\beta <\lambda }X^{[\beta ]}))=nsSet(A,colim_{\beta <\lambda }X^{[\beta ]}),\]
as $U$ is a full inclusion. The diagram (\ref{eq:diagram_proof_of_lem_nssets_small}) is with these replacements a diagram in the category of sets that arises from the diagram (\ref{eq:diagram_cocone_on_X_proof_of_lem_nssets_small}) in $nsSet$, so the non-singular simplicial set $A$ must be $\kappa$-small relative to the subcategory of maps whose inclusions are degreewise injective.
\end{proof}
\noindent \cref{lem:nssets_small} is more or less what we will use to verify the second condition stated in \cref{thm:lifting_across_adjunction} whose language is as follows.
\begin{definition}\label{def:permits_small_object_argument}
If $K$ is a set of maps in some cocomplete category, then $K$ \textbf{permits the small object argument} if the sources of the elements of $K$ are small relative to $K$-cell.
\end{definition}
\noindent The terminology presented in \cref{def:permits_small_object_argument} is part of Hirschhorn's notion of \emph{cofibrantly generated} \cite[Def.~11.1.2]{Hi03}, which is a property of model categories.

Note that Hirschhorn's notion may differ from Hovey's \cite[Def.~2.1.17]{Ho99} as the two authors' notions of \emph{smallness} differ slightly. Compare Hovey's definition \cite[Def.~2.1.3]{Ho99} with Hirschhorn's \cite[Def.~10.4.1]{Hi03}.

We say that a simplicial set is \textbf{finite} if it is generated by finitely many simplices. A simplicial set is finite if and only if it has finitely many non-degenerate simplices.
\begin{lemma}\label{lem:first_condition_lifting_criterion}
Each finite non-singular simplicial set is $\aleph _0$-small relative to the subcategory of maps $f$ such that $U(f)$ is degreewise injective.
\end{lemma}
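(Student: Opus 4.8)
The plan is to obtain this as an immediate specialization of \cref{lem:nssets_small}. First I would recall, from the characterization of finiteness stated just above, that a finite non-singular simplicial set $A$ has only finitely many non-degenerate simplices; that is, the cardinal of the set $A^\sharp$ is finite. The crux is then a trivial piece of cardinal arithmetic: the first infinite cardinal strictly greater than a finite cardinal is $\aleph _0$, because $\aleph _0$ already exceeds every finite cardinal while no cardinal below $\aleph _0$ is infinite. Consequently the cardinal $\kappa$ that \cref{lem:nssets_small} associates to $A$ is precisely $\aleph _0$.

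With this identification of $\kappa$ in hand, I would simply invoke \cref{lem:nssets_small} for the given $A$. That lemma asserts that $A$ is $\kappa$-small relative to the subcategory of maps $f$ such that $U(f)$ is degreewise injective, and since here $\kappa =\aleph _0$, this is exactly the assertion that $A$ is $\aleph _0$-small relative to that subcategory. This completes the argument.

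I expect no genuine obstacle: all the substance resides in \cref{lem:nssets_small} (which in turn rests on \cref{ex:ssets_small} and on $U$ preserving filtered colimits), and the present statement merely records the finite case, where smallness holds at the smallest possible cardinal $\aleph _0$. The only point worth stating explicitly is the cardinal-arithmetic remark above, so that the hypothesis ``first infinite cardinal greater than the cardinal of $A^\sharp$'' is correctly read as $\aleph _0$ rather than something larger. The payoff, to be used in the sequel, is that the sources of the maps in $DSd^2(I)$ and $DSd^2(J)$ are finite non-singular simplicial sets and are therefore $\aleph _0$-small relative to the maps $f$ with $U(f)$ degreewise injective; combined with \cref{lem:relative_cell_complexes_degreewise_injective}, which shows that relative $DSd^2(I)$- and $DSd^2(J)$-cell complexes have degreewise injective underlying maps, this verifies that $DSd^2(I)$ and $DSd^2(J)$ permit the small object argument in the sense of \cref{def:permits_small_object_argument}, that is, the first lifting condition of \cref{thm:lifting_across_adjunction}.
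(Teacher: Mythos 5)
Your proposal is correct and follows exactly the paper's own proof: observe that a finite non-singular simplicial set has finitely many non-degenerate simplices, so the first infinite cardinal exceeding the cardinality of $A^\sharp$ is $\aleph_0$, and then apply \cref{lem:nssets_small}. The additional remarks about how the lemma feeds into \cref{lem:doubly_subdivided_inclusions_of_boundaries_and_horns_permit_small_object} match the paper's subsequent use of the result.
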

\begin{proof}
Let $A$ be a finite non-singular simplicial set. Then $\aleph _0$ is the first infinite cardinal greater than the cardinality of the set $A^\sharp$ of non-degenerate simplices. Due to \cref{lem:nssets_small}, the simplicial set $A$ is thus $\aleph _0$-small relative to the subcategory of maps $f$ such that $U(f)$ is degreewise injective.
\end{proof}
\begin{lemma}\label{lem:doubly_subdivided_inclusions_of_boundaries_and_horns_permit_small_object}
Each of the sets $DSd^2(I)$ and $DSd^2(J)$ permits the small object argument.
\end{lemma}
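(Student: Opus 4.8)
The plan is to unwind \cref{def:permits_small_object_argument} and reduce the statement to the smallness result already available in \cref{lem:first_condition_lifting_criterion}. By definition, $DSd^2(I)$ (resp.\ $DSd^2(J)$) permits the small object argument precisely when the sources of its elements are small relative to $DSd^2(I)$-cell (resp.\ $DSd^2(J)$-cell). These sources are the non-singular simplicial sets $DSd^2(\partial \Delta [n])$ for $n\geq 0$ (resp.\ $DSd^2(\Lambda ^k[n])$ for $0\leq k\leq n>0$), so it suffices to prove that each such object is $\aleph _0$-small relative to the corresponding class of relative cell complexes.

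First I would check that each of these sources is a \emph{finite} non-singular simplicial set. Non-singularity is automatic, since $D$ corestricts to $nsSet$. For finiteness, recall that $\partial \Delta [n]$ and $\Lambda ^k[n]$ are finite, that $Sd$ carries finite simplicial sets to finite ones (as $Sd$ preserves colimits \cite[Cor.~4.2.11]{FP90} and the subdivision of a single standard simplex is finite, while every finite simplicial set is a finite colimit of standard simplices), and that $D$ preserves finiteness: since the unit $\eta _X:X\to UDX$ is degreewise surjective, every non-degenerate simplex of $DX$ is the image of a non-degenerate simplex of $X$, whence $\lvert (DX)^\sharp \rvert \leq \lvert X^\sharp \rvert$. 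Applying $Sd$ twice and then $D$ to a finite simplicial set therefore yields a finite non-singular simplicial set.

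It then remains to combine the preparatory lemmas. By \cref{lem:first_condition_lifting_criterion}, each such finite non-singular source is $\aleph _0$-small relative to the subcategory $\mathscr{D}$ of maps $f$ for which $U(f)$ is degreewise injective. By \cref{lem:relative_cell_complexes_degreewise_injective}, every relative $DSd^2(I)$-cell complex and every relative $DSd^2(J)$-cell complex is a composition of Str\o m maps and hence has degreewise injective underlying map; that is, both $DSd^2(I)$-cell and $DSd^2(J)$-cell are contained in $\mathscr{D}$. Since the colimit-preservation property in \cref{def:smallness} for a class $\mathscr{D}$ is inherited by every subcategory of $\mathscr{D}$ (a $\lambda$-sequence of maps in the smaller class is in particular a $\lambda$-sequence of maps in $\mathscr{D}$), the sources are $\aleph _0$-small relative to $DSd^2(I)$-cell and to $DSd^2(J)$-cell, respectively. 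This is exactly what \cref{def:permits_small_object_argument} demands. The only step requiring genuine care is the finiteness claim, and within it the observation that desingularization does not increase the number of non-degenerate simplices; everything else is a formal consequence of the cited lemmas.
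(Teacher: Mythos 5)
Your proof is correct and follows the same reduction as the paper's: establish that the sources are finite non-singular simplicial sets, then combine \cref{lem:first_condition_lifting_criterion} with \cref{lem:relative_cell_complexes_degreewise_injective} and the observation that smallness relative to a class is inherited by smallness relative to any subclass. The only divergence is in the finiteness step, where the paper identifies $DSd^2(\partial \Delta [n])$ and $DSd^2(\Lambda ^k[n])$ with the Barratt nerves $BSd(\partial \Delta [n])$ and $BSd(\Lambda ^k[n])$, i.e.\ nerves of finite posets (using that $Sd$ of these objects is already non-singular so $D$ acts trivially), whereas you argue more generally that $Sd$ preserves finiteness and that desingularization cannot increase the number of non-degenerate simplices because the unit $\eta _X$ is degreewise surjective; both arguments are valid.
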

\begin{proof}
Recall the natural map $b_X:Sd\, X\to BX$ from \cref{lem:properties_of_b_X}. For each $n\geq 0$, the simplicial set
\[BSd(\partial \Delta [n])\cong Sd^2(\partial \Delta [n])\cong DSd^2(\partial \Delta [n])\]
is the nerve of the poset $Sd(\partial \Delta [n])^\sharp$ of non-degenerate simplices of $Sd(\partial \Delta [n])$. This poset is finite, so its nerve has finitely many non-degenerate simplices. Similarly, for each expression $0\leq k\leq n>0$, the simplicial set
\[BSd(\Lambda ^k[n])\cong Sd^2(\Lambda ^k[n])\cong DSd^2(\Lambda ^k[n])\]
is the nerve of the poset $Sd(\Lambda ^k[n])^\sharp$ of non-degenerate simplices of $Sd(\Lambda ^k[n])$. This poset is finite, so its nerve has finitely many non-degenerate simplices.

By \cref{lem:first_condition_lifting_criterion}, the non-singular simplicial set $DSd^2(\partial \Delta [n])$ is $\aleph _0$-small relative to the subcategory of maps $f$ such that $U(f)$ is degreewise injective. For every relative $DSd^2(I)$-cell complex $f$, the map $U(f)$ is degreewise injective, by \cref{lem:relative_cell_complexes_degreewise_injective}. Similarly, the non-singular simplicial set $DSd^2(\Lambda ^k[n])$ is $\aleph _0$-small relative to $DSd^2(J)$-cell.
\end{proof}
\noindent Finally, \cref{lem:doubly_subdivided_inclusions_of_boundaries_and_horns_permit_small_object} confirms the first condition stated in the lifting theorem.

The work done so far yields the announced right-induced model structure on $nsSet$.
\begin{proposition}\label{prop:main_homotopy_theory}
Equip $sSet$ with the standard model structure. There is a cofibrantly generated model structure on $nsSet$ with $DSd^2(I)$ (resp. $DSd^2(J)$) serving as a set of generating (resp. trivial) cofibrations. When $nsSet$ is equipped with this model structure, the adjunction $(DSd^2,Ex^2U)$ is a Quillen pair.
\end{proposition}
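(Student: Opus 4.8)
The plan is to apply Kan's lifting theorem, \cref{thm:lifting_across_adjunction}, in the case $(F,G)=(DSd^2,Ex^2U)$ with $\mathscr{M}=sSet$ carrying the standard model structure and $\mathscr{N}=nsSet$. All of the genuine work has been isolated in the preceding sections, so the task here is to assemble the hypotheses of that theorem and read off its conclusion.

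First I would record the structural prerequisites. The pair $(DSd^2,Ex^2U)$ is genuinely an adjunction: iterating the adjunction $(Sd,Ex)$ yields $(Sd^2,Ex^2)$, and composing with $(D,U)$ makes the composite of left adjoints $D\circ Sd^2$ left adjoint to the composite of right adjoints $Ex^2\circ U$. The category $sSet$ with the standard model structure is cofibrantly generated, with $I$ and $J$ from \cref{not:prototypes_cofibr_trivial_cofibr_standard_pre_exist} as generating cofibrations and generating trivial cofibrations, respectively. Finally, $nsSet$ is bicomplete, being a reflective subcategory of $sSet$. Thus the ambient hypotheses of \cref{thm:lifting_across_adjunction} are met.

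Next I would verify the two lifting conditions. The first condition, that each of $DSd^2(I)$ and $DSd^2(J)$ permits the small object argument, is precisely \cref{lem:doubly_subdivided_inclusions_of_boundaries_and_horns_permit_small_object}. For the second condition I must show that $Ex^2U$ carries relative $DSd^2(J)$-cell complexes to weak equivalences. Let $f$ be such a cell complex. \cref{prop:second_condition_lifting_criterion} already gives that $U(f)$ is a weak equivalence, and since $Ex$ preserves and reflects weak equivalences \cite[Cor.~4.6.21]{FP90}, so does $Ex^2$; hence $Ex^2U(f)$ is a weak equivalence, as required.

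With both conditions in hand, \cref{thm:lifting_across_adjunction} immediately yields a cofibrantly generated model structure on $nsSet$ whose generating cofibrations and generating trivial cofibrations are $DSd^2(I)$ and $DSd^2(J)$, whose weak equivalences are the maps $f$ with $Ex^2U(f)$ a weak equivalence, and for which $(DSd^2,Ex^2U)$ is a Quillen pair. The only point demanding any real care is the bridge in the second condition from $U(f)$ to $Ex^2U(f)$, which rests on $Ex$ preserving weak equivalences; everything else is bookkeeping, the substantive content having been discharged in \cref{prop:second_condition_lifting_criterion} and \cref{lem:doubly_subdivided_inclusions_of_boundaries_and_horns_permit_small_object}.
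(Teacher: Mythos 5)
Your proposal is correct and follows essentially the same route as the paper: invoke \cref{thm:lifting_across_adjunction} for $(DSd^2,Ex^2U)$, cite bicompleteness of $nsSet$, discharge the first lifting condition via \cref{lem:doubly_subdivided_inclusions_of_boundaries_and_horns_permit_small_object}, and the second via \cref{prop:second_condition_lifting_criterion} together with the fact that $Ex$ preserves weak equivalences. The extra remarks on the adjunction being a composite and on $sSet$ being cofibrantly generated are harmless bookkeeping the paper leaves implicit.
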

\begin{proof}
We will apply \cref{thm:lifting_across_adjunction} to $(F,G)=(DSd^2,Ex^2U)$. First, note that $nsSet$ is bicomplete, by \cite[Cor.~2.2.3.]{Fj20-DN}. Now, consider the two conditions stated in the theorem.

The first condition holds by \cref{lem:doubly_subdivided_inclusions_of_boundaries_and_horns_permit_small_object}. As $Ex$ preserves and reflects weak equivalences, it follows from \cref{prop:second_condition_lifting_criterion} that the second condition also holds.
\end{proof}

\section{On cofibrations}
\label{sec:cofib}

The cofibrations in the cofibrantly generated model category $nsSet$ form the class $DSd^2(I)$-cof \cite[Prop.~11.2.1~(1)]{Hi03}. In this section, we will briefly discuss the $DSd^2(I)$-cofibrations and establish the important axiom of propriety, which in this case amounts to arguing that weak equivalences are preserved under cobase change along $DSd^2(I)$-cofibrations.

Notice that there is no change in the initial and terminal objects, compared with $sSet$.
\begin{lemma}
The empty simplicial set $\emptyset$ is the only initial object in the category $nsSet$. Similarly, the standard $0$-simplex $\Delta [0]$ a terminal object in $nsSet$.
\end{lemma}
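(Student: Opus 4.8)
The plan is to exploit two facts established in \cref{sec:pre_exist}: that $U\colon nsSet\to sSet$ is a \emph{full} inclusion, and that $nsSet$ is a reflective subcategory of $sSet$ with reflection $D$. The two candidate objects are manifestly non-singular, so they genuinely live in $nsSet$: the empty simplicial set $\emptyset$ has no simplices at all, and $\Delta[0]$ has a single non-degenerate simplex, its unique vertex, whose representing map is trivially degreewise injective. Once we know the objects belong to $nsSet$, the whole question reduces to computing hom-sets.

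First I would treat the terminal object. For an arbitrary non-singular simplicial set $X$, fullness of $U$ gives the equality $nsSet(X,\Delta[0])=sSet(X,\Delta[0])$, and the right-hand side is a singleton because $\Delta[0]$ is terminal in $sSet$. Hence there is exactly one morphism $X\to\Delta[0]$ in $nsSet$, so $\Delta[0]$ is terminal there. (Equivalently, one may invoke that a reflective subcategory is closed under all limits that exist in the ambient category, so the terminal object $\Delta[0]$ of $sSet$ stays terminal in $nsSet$.) Dually, for the initial object I would use fullness to write $nsSet(\emptyset,X)=sSet(\emptyset,X)$, a singleton since $\emptyset$ is initial in $sSet$; one could also note that the left adjoint $D$ preserves colimits and that $D\emptyset\cong\emptyset$, since $\emptyset$ is already non-singular, so that $\emptyset$ is the initial object of $nsSet$.

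For the uniqueness assertions I would appeal to the standard categorical fact that initial and terminal objects, when they exist, are unique up to unique isomorphism. To upgrade this to the literal word ``only'' in the statement, observe that any object of $nsSet$ isomorphic to $\emptyset$ must have empty simplex sets in every degree, hence equals $\emptyset$, and that any object isomorphic to $\Delta[0]$ has exactly one simplex in each degree, which forces it to coincide with $\Delta[0]$.

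There is no real obstacle here: the argument rests entirely on the fullness of $U$ together with the observation that $\emptyset$ and $\Delta[0]$ already belong to $nsSet$. The only point that deserves a word of care is the passage from uniqueness up to isomorphism to the literal uniqueness claimed in the statement, and that is handled by the degreewise descriptions in the previous paragraph.
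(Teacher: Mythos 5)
Your proof is correct and follows essentially the same route as the paper: both arguments reduce to the observations that $\emptyset$ and $\Delta[0]$ are non-singular and that the full, reflective inclusion $U:nsSet\to sSet$ lets $nsSet$ inherit the empty colimit and the empty limit from $sSet$. The only blemish is your closing remark that an object isomorphic to $\Delta[0]$ must literally coincide with it (a simplicial set with one simplex in each degree need not equal $\Delta[0]$ on the nose), but this is harmless since the lemma claims only that $\Delta[0]$ is \emph{a} terminal object, with the literal uniqueness asserted only for $\emptyset$, where your degreewise argument does work.
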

\begin{proof}
The empty simplicial set $\emptyset$ is the colimit of the empty diagram in $sSet$. It is a non-singular simplicial set, so it is also the colimit of the underlying diagram in $nsSet$. Thus $\emptyset$ is initial in $nsSet$.

Similarly, the standard $0$-simplex $\Delta [0]$ is a limit of the empty diagram in $sSet$. Then $\Delta [0]$ is also the limit of the underlying diagram in $nsSet$ as this reflective subcategory inherits limits from $sSet$. Thus we can take $\Delta [0]$ to be a terminal object of $nsSet$.
\end{proof}
\noindent Furthermore, the following property of cofibrations is worth pointing out at this stage, although it is immediate from \cref{lem:relative_cell_complexes_degreewise_injective}.
\begin{lemma}\label{lem:cofib_degreewise_injective}
Any cofibration of $nsSet$ is a retract of a composition of Str\o m maps.
\end{lemma}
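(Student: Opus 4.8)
The plan is to combine the identification of the cofibrations of $nsSet$ with \cref{lem:relative_cell_complexes_degreewise_injective}, so the argument is essentially a two-line deduction from results already in hand. First I would recall that, by \cref{prop:main_homotopy_theory}, the category $nsSet$ is cofibrantly generated with $DSd^2(I)$ as a set of generating cofibrations, so that the cofibrations of $nsSet$ are exactly the members of the class $DSd^2(I)$-cof.

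Next I would invoke the general theory of cofibrantly generated model categories, namely Proposition 11.2.1 in Hirschhorn's book \cite[p.~211]{Hi03}, which was already quoted in the excerpt: whenever $K$ is a set of generating cofibrations for a model category, the class $K$-cof of cofibrations coincides with the class of retracts of relative $K$-cell complexes. Specializing to $K=DSd^2(I)$, this says that every cofibration of $nsSet$ is a retract of some relative $DSd^2(I)$-cell complex.

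Finally I would apply \cref{lem:relative_cell_complexes_degreewise_injective}, which asserts that each relative $DSd^2(I)$-cell complex is a composition of Str\o m maps in the sense of \cref{def:def_comp_of_strom_maps}. Chaining the two facts, any cofibration of $nsSet$ is a retract of a composition of Str\o m maps, which is the claim. There is no genuine obstacle in this argument, since every step is a direct citation of an already-established result; the only point deserving a moment's care is the bookkeeping of the two descriptions, namely matching the ``retract of a relative $DSd^2(I)$-cell complex'' characterization furnished by \cite[Prop.~11.2.1]{Hi03} against the ``relative $DSd^2(I)$-cell complex is a composition of Str\o m maps'' identification of \cref{lem:relative_cell_complexes_degreewise_injective}, so that the retract and the composition are layered in the correct order.
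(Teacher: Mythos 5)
Your argument is correct and coincides with the paper's own proof: both identify the cofibrations as the retracts of relative $DSd^2(I)$-cell complexes via \cite[Prop.~11.2.1]{Hi03} and then apply \cref{lem:relative_cell_complexes_degreewise_injective} to recognize those cell complexes as compositions of Str\o m maps. Nothing further is needed.
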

\noindent In particular, any cofibration is degreewise injective.
\begin{proof}[Proof of \cref{lem:cofib_degreewise_injective}.]
The cofibrations are precisely the retracts of the relative $DSd^2(I)$-cell complexes \\ \cite[Prop.~11.2.1.~(1),~p.~211]{Hi03}. From \cref{lem:relative_cell_complexes_degreewise_injective} we know that the relative $DSd^2(I)$-cell complexes are compositions of Str\o m maps, which are degreewise injective.
\end{proof}
\noindent Regrettably, \cref{lem:cofib_degreewise_injective} does not provide a characterization of the cofibrations of $nsSet$.

The following result concerns the classes $DSd^2(I)$-cell and $DSd^2(J)$-cell and is a strengthening of \cref{lem:Pushout_along_strom_homotopically_wellbehaved}.
\begin{lemma}\label{lem:pushout_along_transfinite_comp_of_strom}
Let $i:A\to B$ be a composition of Str\o m maps. Suppose $f:A\to C$ a map in $nsSet$. Then the canonical map
\[B\sqcup _AC\to D(B\sqcup _AC)\]
is a weak equivalence.
\end{lemma}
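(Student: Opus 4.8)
The plan is to reduce the statement to the single–Str\o m–map case, \cref{lem:Pushout_along_strom_homotopically_wellbehaved}, by transfinite induction along a presentation of $i$ as a composition of Str\o m maps. Fix a $\lambda$-sequence $A=A^{[0]}\to A^{[1]}\to\cdots$ of Str\o m maps presenting $i$, so that $B=\mathrm{colim}_{\beta<\lambda}A^{[\beta]}$ in the sense of \cref{def:hty_sequence}. For each $\beta$ I would form the pushout $C^{[\beta]}=D(A^{[\beta]}\sqcup_A C)$ in $nsSet$, the $sSet$-pushout $P^{[\beta]}=A^{[\beta]}\sqcup_A C$, and the canonical comparison $\phi^{[\beta]}:P^{[\beta]}\to C^{[\beta]}$. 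By the pasting law for pushouts, $C^{[\beta]}\to C^{[\beta+1]}$ is the cobase change in $nsSet$ of the Str\o m map $A^{[\beta]}\to A^{[\beta+1]}$ along $A^{[\beta]}\to C^{[\beta]}$, hence itself a Str\o m map by \cref{prop:Strom-maps_closed_under_cobasechange}; thus $C^{[\bullet]}$ is a composition of Str\o m maps. Since colimits commute with colimits and $U$ preserves filtered colimits, $\mathrm{colim}_\beta C^{[\beta]}=D(B\sqcup_A C)$ while $\mathrm{colim}_\beta P^{[\beta]}=B\sqcup_A C$, and the map in the statement is the colimit $\phi^{[\lambda]}$ of the $\phi^{[\beta]}$.

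I would then prove that every $\phi^{[\beta]}$ is a weak equivalence by transfinite induction on $\beta$. The base case $\phi^{[0]}=\mathrm{id}_C$ is trivial. For a successor $\beta+1$, I would apply \cref{lem:Pushout_along_strom_homotopically_wellbehaved} to the Str\o m map $A^{[\beta]}\to A^{[\beta+1]}$ and the map $A^{[\beta]}\to C^{[\beta]}$, which shows that the comparison from the $sSet$-pushout $A^{[\beta+1]}\sqcup_{A^{[\beta]}}C^{[\beta]}$ to its desingularization $C^{[\beta+1]}$ is a weak equivalence. Separately, the inductive hypothesis that $\phi^{[\beta]}:P^{[\beta]}\to C^{[\beta]}$ is a weak equivalence, combined with the glueing lemma \cite[Prop.~13.3.9]{Hi03} applied to the rows $A^{[\beta+1]}\leftarrow A^{[\beta]}\to P^{[\beta]}$ and $A^{[\beta+1]}\leftarrow A^{[\beta]}\to C^{[\beta]}$ (whose common left leg $A^{[\beta]}\to A^{[\beta+1]}$ is a cofibration), shows that $P^{[\beta+1]}=A^{[\beta+1]}\sqcup_{A^{[\beta]}}P^{[\beta]}\to A^{[\beta+1]}\sqcup_{A^{[\beta]}}C^{[\beta]}$ is a weak equivalence. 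Composing the two and checking via universal properties that the composite is $\phi^{[\beta+1]}$ completes the successor step.

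For a limit ordinal $\beta$, and finally for $\phi^{[\lambda]}$ itself, I would use that $P^{[\bullet]}$ and $C^{[\bullet]}$ are cocontinuous $\lambda$-sequences of cofibrations in $sSet$: the bonding maps of $P^{[\bullet]}$ are cobase changes of the degreewise injective maps $A^{[\gamma]}\to A^{[\gamma+1]}$, and those of $C^{[\bullet]}$ are Str\o m maps, hence degreewise injective. Since $sSet$ is proper and every object is cofibrant, such sequences are Reedy cofibrant and their colimits are homotopy colimits, so the levelwise weak equivalence $(\phi^{[\gamma]})_{\gamma<\beta}$ induces a weak equivalence on colimits, namely $\phi^{[\beta]}$. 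Applying this with $\beta=\lambda$ yields that $\phi^{[\lambda]}:B\sqcup_A C\to D(B\sqcup_A C)$ is a weak equivalence, which is the assertion.

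I expect the main obstacle to be the limit-ordinal step: unlike the successor step, it is not a direct application of the binary glueing lemma but requires the transfinite fact that a levelwise weak equivalence between $\lambda$-sequences of cofibrations induces a weak equivalence of colimits. Establishing this cleanly amounts to recording that both comparison sequences are cocontinuous with cofibrant entries and cofibration bonding maps, so that their colimits compute homotopy colimits in the proper model category $sSet$, and then invoking homotopy-invariance of the homotopy colimit. The remaining bookkeeping—identifying the various $sSet$- and $nsSet$-pushouts via the pasting law, and verifying that the composites of comparison maps are the canonical ones—is routine.
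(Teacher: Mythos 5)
Your proposal is correct and follows essentially the same route as the paper's proof: the same transfinite induction along the presentation, the same factorization of the successor-step comparison map through $A^{[\beta+1]}\sqcup_{A^{[\beta]}}D(A^{[\beta]}\sqcup_A C)$ (handled by \cref{lem:Pushout_along_strom_homotopically_wellbehaved} together with left properness of $sSet$, which is what your gluing-lemma application amounts to), and the same observation that both comparison sequences have degreewise injective bonding maps so that the limit-ordinal and final steps follow from homotopy invariance of sequential colimits along cofibrations (the paper phrases this via $Ex^\infty$ and simplicial homotopy groups rather than Reedy cofibrancy, but the content is identical).
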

\noindent In previous sections, there were only one notion of weak equivalence, namely the weak equivalences in $sSet$. However, now that $nsSet$ is established as a model category there are really two notions of weak equivalence --- one in each model category.

To avoid confusion, one might want to write the canonical map of \cref{lem:pushout_along_transfinite_comp_of_strom} as
\[UB\sqcup _{UA}UC\to UD(UB\sqcup _{UA}UC).\]
On the other hand, because a map in $nsSet$ is a weak equivalence if and only if the result of applying $U$ to it is a weak equivalence, it is not necessary to be so pedantic. We simply remind the reader that we have a convention that the notation $B\sqcup _AC$ always refers to a pushout in $sSet$, and not in $nsSet$. This is because the symbol $D(B\sqcup _AC)$ is readily available to denote the pushout in $nsSet$ of the underlying diagram.
\begin{proof}[Proof of \cref{lem:pushout_along_transfinite_comp_of_strom}.]
Suppose $i$ has the presentation
\begin{displaymath}
\xymatrix{
A=A^{[0]} \ar@/_/[dr]_(.4)i \ar[r] & A^{[1]} \ar[d] \ar[r] & \dots \ar[r] & A^{[\beta ]}\ar@/^/[lld] \ar[r] & \dots \\
& B=colim_{\beta <\lambda }A^{[\beta ]}
}
\end{displaymath}
which by definition includes the assumption that each map $A^{[\beta ]}\to A^{[\beta +1]}$, $\beta +1<\lambda$, is a Str\o m map. 

Again, because the inclusion $U:nsSet\to sSet$ preserves filtered colimits, the $\lambda$-sequence $U\circ A$ is a presentation of $U(i)$ as a composition of inclusions of Str\o m maps.

Next, consider the diagram
\begin{displaymath}
\xymatrix{
A \ar[d]_i \ar[r]^f & C \ar[d] \ar@/^1pc/[ddr] \\
B \ar[r] \ar@/_1pc/[drr] & B\sqcup _AC \ar@{-->}[dr] \\
&& D(B\sqcup _AC)
}
\end{displaymath}
in $sSet$ from which the canonical map arises. Notice that it is the colimit of the $\lambda$-sequence of diagrams
\begin{displaymath}
\xymatrix{
A^{[0]} \ar[d] \ar[r]^f & C \ar[d] \ar@/^1pc/[ddr] \\
A^{[\beta ]}\ar[r] \ar@/_1pc/[drr] & A^{[\beta ]}\sqcup _{A^{[0]}}C \ar[dr] \\
&& D(A^{[\beta ]}\sqcup _{A^{[0]}}C)
}
\end{displaymath}
in $sSet$.

For the purposes of an argument by induction, consider the diagram
\begin{equation}
\label{eq:diagram_proof_of_lem_pushout_along_transfinite_comp_of_strom}
\begin{gathered}
\xymatrix@C=0.9em{
A^{[0]}\sqcup _{A^{[0]}}C \ar[d]^\sim \ar[r] & A^{[1]}\sqcup _{A^{[0]}}C \ar[d]^\sim \ar[r] & A^{[2]}\sqcup _{A^{[0]}}C \ar[d] \ar[r] & \cdots \\
D(A^{[0]}\sqcup _{A^{[0]}}C) \ar[r] & D(A^{[1]}\sqcup _{A^{[0]}}C) \ar[r] & D(A^{[2]}\sqcup _{A^{[0]}}C) \ar[r] & \cdots
}
\end{gathered}
\end{equation}
in $sSet$, which gives rise to
\[B\sqcup _AC\to D(B\sqcup _AC),\]
as we have established. Notice that the horizontal maps in the upper part of the diagram are degreewise injective. We now explain that the horizontal maps in the lower part are also degreewise injective.

Each map $A^{[\beta ]}\to A^{[\beta +1]}$,
\[0\leq \beta ,\; \beta +1<\lambda,\]
is a Str\o m map. Because the square
\begin{displaymath}
\xymatrix{
A^{[\beta ]} \ar[d] \ar[r] & D(A^{[\beta ]}\sqcup _{A^{[0]}}C) \ar[d] \\
A^{[\beta +1]} \ar[r] & D(A^{[\beta +1]}\sqcup _{A^{[0]}}C)
}
\end{displaymath}
in $nsSet$ is cocartesian, each map
\[D(A^{[\beta ]}\sqcup _{A^{[0]}}C)\to D(A^{[\beta +1]}\sqcup _{A^{[0]}}C)\]
is also a Str\o m map by \cref{prop:Strom-maps_closed_under_cobasechange} and thus degreewise injective.

Assume that an ordinal $\gamma \leq \lambda$ is such that
\[A^{[\beta ]}\sqcup _{A^{[0]}}C\xrightarrow{\sim } D(A^{[\beta ]}\sqcup _{A^{[0]}}C)\]
for any $\beta <\gamma$.

In the case when $\gamma$ is a limit ordinal, then the map
\[A^{[\gamma ]}\sqcup _{A^{[0]}}C\to D(A^{[\gamma ]}\sqcup _{A^{[0]}}C)\]
arises as a map of colimits, from a truncated version of (\ref{eq:diagram_proof_of_lem_pushout_along_transfinite_comp_of_strom}). In that truncated version, all the vertical maps are weak equivalences.

Next, we intend to use Kan's fibrant replacement functor $Ex^\infty$ on the truncated version of (\ref{eq:diagram_proof_of_lem_pushout_along_transfinite_comp_of_strom}). See \cite[pp.~215--217]{FP90} or \cite[p.~182--188]{GJ09}. The construction $Ex^\infty$ is the result of iterating the right adjoint $Ex:sSet\to sSet$ of the Kan subdivision. The functor $Ex$ can be defined thus
\[Ex(X)_n=sSet(Sd(\Delta [n]),X).\]
Kan's fibrant replacement preserves degreewise injective maps, filtered colimits and comes with a natural (degreewise injective) weak equivalence $e^\infty _X:X\xrightarrow{\sim } Ex^\infty\, X$, implying that the functor also preserves weak equivalences.



Applying $Ex^\infty$ to the trunctated version of (\ref{eq:diagram_proof_of_lem_pushout_along_transfinite_comp_of_strom}) yields a diagram of fibrant simplicial sets (Kan sets) where the horizontal maps are degreewise injective and where the vertical maps are weak equivalences. The simplicial homotopy groups respects the colimit of a sequence whenever the maps of the sequence are degreewise injective. It follows that
\[A^{[\gamma ]}\sqcup _{A^{[0]}}C\xrightarrow{\sim } D(A^{[\gamma ]}\sqcup _{A^{[0]}}C)\]
is a weak equivalence.

In the case when $\gamma =\beta +1$ is a successor ordinal, we consider the diagram
\begin{displaymath}
\xymatrix@C=0.8em{
A^{[0]} \ar[d] \ar[r]^{f} & C \ar[d] \\
A^{[\beta ]}\ar[d] \ar[r] & A^{[\beta ]}\sqcup _{A^{[0]}}C \ar[d] \ar[r]^\sim & D(A^{[\beta ]}\sqcup _{A^{[0]}}C) \ar[d] \ar@/^1pc/[ddr] \\
A^{[\beta +1]} \ar[r] & A^{[\beta +1]}\sqcup _{A^{[0]}}C \ar@/_1pc/[drr] \ar[r] & A^{[\beta +1]}\sqcup _{A^{[\beta ]}}D(A^{[\beta ]}\sqcup _{A^{[0]}}C) \ar@{-->}[dr]^\sim \\
&&& D(A^{[\beta +1]}\sqcup _{A^{[0]}}C)
}
\end{displaymath}
in $sSet$. Here, 
\[A^{[\beta ]}\sqcup _{A^{[0]}}C\xrightarrow{\sim } D(A^{[\beta ]}\sqcup _{A^{[0]}}C)\]
is a weak equivalence by the induction hypothesis. The dashed map is a weak equivalence by \cref{lem:Pushout_along_strom_homotopically_wellbehaved}.

Because the map
\[A^{[\beta ]}\sqcup _{A^{[0]}}C\to A^{[\beta +1]}\sqcup _{A^{[0]}}C\]
is degreewise injective, the map
\[A^{[\beta +1]}\sqcup _{A^{[0]}}C\xrightarrow{\sim } A^{[\beta +1]}\sqcup _{A^{[\beta ]}}D(A^{[\beta ]}\sqcup _{A^{[0]}}C)\]
is a weak equivalence as $sSet$ is left proper. Therefore, the composite
\[A^{[\beta +1]}\sqcup _{A^{[0]}}C\to D(A^{[\beta +1]}\sqcup _{A^{[0]}}C)\]
is a weak equivalence.

Thus far we know that the vertical maps of (\ref{eq:diagram_proof_of_lem_pushout_along_transfinite_comp_of_strom}) are all weak equivalences. If we use Kan's fibrant replacement $Ex^\infty$ again, then we get that
\[B\sqcup _AC\cong colim_{\beta <\lambda }A^{[\beta ]}\sqcup _{A^{[0]}}C\xrightarrow{\sim } colim_{\beta <\lambda }D(A^{[\beta ]}\sqcup _{A^{[0]}}C)\cong D(B\sqcup _AC)\]
is a weak equivalence.
\end{proof}
\noindent Note that the lemma we have just proven has implications for both relative $DSd^2(I)$-cell complexes and relative $DSd^2(J)$-cell complexes as these are all compositions of Str\o m maps.

A result related to \cref{lem:pushout_along_transfinite_comp_of_strom} is the following, which implies that $nsSet$ is left proper.
\begin{lemma}\label{lem:pushout_along_cofibration}
Let $i:A\to B$ be a cofbration in $nsSet$. Suppose $f:A\to C$ a map in $nsSet$. Then the canonical map
\[\eta _{B\sqcup _AC}:B\sqcup _AC\to D(B\sqcup _AC)\]
is a weak equivalence.
\end{lemma}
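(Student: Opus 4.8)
The plan is to reduce to \cref{lem:pushout_along_transfinite_comp_of_strom} by a retract argument, exploiting \cref{lem:cofib_degreewise_injective}, which says that every cofibration of $nsSet$ is a retract of a composition of Str\o m maps. So first I would choose a composition of Str\o m maps $i':A'\to B'$ of which $i$ is a retract in the arrow category. This supplies maps $u:A\to A'$, $p:A'\to A$, $v:B\to B'$ and $q:B'\to B$ with $pu=1_A$, $qv=1_B$, $i'u=vi$ and $qi'=ip$.

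Next I would build the comparison pushout against which to retract. Keeping $C$ fixed, I form the pushout $B'\sqcup_{A'}C$ in $sSet$ of $i':A'\to B'$ and the composite $fp:A'\to C$. Since $i'$ is a composition of Str\o m maps and $fp$ is a map in $nsSet$, \cref{lem:pushout_along_transfinite_comp_of_strom} applies and tells us that the canonical map $\eta_{B'\sqcup_{A'}C}:B'\sqcup_{A'}C\to D(B'\sqcup_{A'}C)$ is a weak equivalence.

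The core step is to exhibit $B\sqcup_AC$ as a retract of $B'\sqcup_{A'}C$ compatibly with $\eta$. Using the universal property of the two pushouts, I would define $\phi:B\sqcup_AC\to B'\sqcup_{A'}C$ out of $v:B\to B'$ and $1_C:C\to C$, and $\psi:B'\sqcup_{A'}C\to B\sqcup_AC$ out of $q:B'\to B$ and $1_C:C\to C$; the required compatibility over $A$ follows from $i'u=vi$ together with $pu=1_A$, and the compatibility over $A'$ follows from $qi'=ip$ (here the choice $fp:A'\to C$ is exactly what makes the inclusions of the two pushouts match up). A short check gives $\psi\phi=1_{B\sqcup_AC}$, so $B\sqcup_AC$ is a retract of $B'\sqcup_{A'}C$ in $sSet$. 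Functoriality of $D$ then yields $D\psi\circ D\phi=D(\psi\phi)=1$, and the naturality of $\eta$ makes the two squares relating $\phi,\psi$ to $D\phi,D\psi$ commute. Hence $\eta_{B\sqcup_AC}$ is a retract of $\eta_{B'\sqcup_{A'}C}$ in the arrow category of $sSet$.

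Finally, because the weak equivalences of $sSet$ are closed under retracts \cite[Def.~7.1.3]{Hi03}, and $\eta_{B'\sqcup_{A'}C}$ is a weak equivalence by the previous paragraph, it follows that $\eta_{B\sqcup_AC}$ is a weak equivalence, as claimed. I expect the main obstacle to be the bookkeeping of the retract diagram, namely verifying that the choice $fp:A'\to C$ makes $\phi$ and $\psi$ well defined on the pushouts and that the resulting retract is compatible with $\eta$; once that diagram is in place, the conclusion is immediate from \cref{lem:pushout_along_transfinite_comp_of_strom} and the retract stability of weak equivalences.
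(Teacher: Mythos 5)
Your proof is correct and follows essentially the same strategy as the paper's: exhibit $\eta _{B\sqcup _AC}$ as a retract of the corresponding canonical map for a pushout along a composition of Str\o m maps, then invoke \cref{lem:pushout_along_transfinite_comp_of_strom} together with the retract-closure of weak equivalences. The only cosmetic difference is that the paper re-runs the retract argument (factor $i$ as a relative $DSd^2(I)$-cell complex followed by a trivial fibration and lift) to obtain a retract that fixes $A$, so the same attaching map $f$ is reused, whereas you take an arbitrary retract in the arrow category from \cref{lem:cofib_degreewise_injective} and compensate by attaching along $fp$; both versions go through.
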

\begin{proof}
The model category $nsSet$ is cofibrantly generated by \cref{prop:main_homotopy_theory} and thus we can factor $i=qj$ as a relative $DSd^2(I)$-cell complex $j:A\to X$ followed by a trivial fibration $q:X\to B$. Thus $(i,q)$ is a lifting-extension pair, so we can lift in the square
\begin{displaymath}
\xymatrix{
A \ar[d]_i \ar[r]^j & X \ar[d]^\sim _q \\
B \ar[r]^1 \ar@{-->}[ur]^s & B
}
\end{displaymath}
to write $i$ as a retract of $j$. This is what is known as the retract argument \cite[Prop.~7.2.2, p.~110]{Hi03}.

Next, we use the construction above to draw the diagram
\begin{displaymath}
\xymatrix{
& A \ar@{-}@/_1.9pc/[ldd]_j\hole \ar[d]_i \ar[r]^f & C \ar[d] \\
& B \ar@/_3.4pc/[ddd]_1 \ar[dd]_s \ar[r] & B\sqcup _AC \ar[ddd]_1 \ar[r] & D(B\sqcup _AC) \ar[ddd]_1 \\
\ar@/_/[dr] \\
& X \ar[d]_q^\sim \\
& B \ar[r] & B\sqcup _AC \ar[r] & D(B\sqcup _AC)
}
\end{displaymath}
in $sSet$. We will expand this diagram to display $\eta _{B\sqcup _AC}$
as a retract of the weak equivalence $\eta _{X\sqcup _AC}$.

Form the pushout $X\sqcup _AC$ in $sSet$ and then use the naturality of $\eta _{B\sqcup _AC}$ to expand the diagram above to the diagram
\begin{displaymath}
\xymatrix{
& A \ar@{-}@/_1.9pc/[ldd]_j\hole \ar[d]_i \ar[rr]^f && C \ar[d] \ar@{-}@/^1.5pc/[dr]\hole \\
& B \ar@/_3.4pc/[ddd]_1 \ar[dd]_s \ar[rr]^(.4){\bar{f} } && B\sqcup _AC\ar@{-->}[dd]_{\bar{s} } \ar@/_1.5pc/@{-}[ldd]_1\hole \ar[rr] & \ar@/^1pc/[ldd] & D(B\sqcup _AC) \ar[dd] \ar@/^4pc/[ddd]^1 \\
\ar@/_/[dr] \\
& X \ar[d]_q^\sim \ar[rr]_(.3)g & \ar@/_1pc/[dr] & X\sqcup _AC \ar[rr]^\sim _{\eta _{X\sqcup _AC}} && D(X\sqcup _AC) \\
& B \ar[rr]_(.4){\bar{f} } && B\sqcup _AC \ar[rr]_{\eta _{B\sqcup _AC}} && D(B\sqcup _AC)
}
\end{displaymath}
in which $\eta _{X\sqcup _AC}$ is a weak equivalence by \cref{lem:pushout_along_transfinite_comp_of_strom} as $j$ is a composition of Str\o m maps.

From this point, we can use that
\[X\sqcup _AC\cong X\sqcup _B(B\sqcup _AC)\]
to obtain a canonical map $\bar{q} :X\sqcup _AC\to B\sqcup _AC$ between pushouts. By its origin, it has the property that $1=\bar{q} \circ \bar{s}$ and $\bar{f} \circ q=\bar{q} \circ g$.

Finally, the naturality of $\eta _{X\sqcup _AC}$ and the functorality of desingularization finishes our argument that $\eta _{B\sqcup _AC}$ is a retract of the weak equivalence $\eta _{X\sqcup _AC}$. Then by the retract axiom for model categories, it follows that the former is a weak equivalence as the latter is.
\end{proof}
\noindent \cref{lem:pushout_along_cofibration} lets us deduce that $nsSet$ is proper.
\begin{proposition}\label{prop:axiom_of_propriety}
The model category $nsSet$ is proper.
\end{proposition}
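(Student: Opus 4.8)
The plan is to establish left properness and right properness separately. Throughout I would lean on two standing facts: that $sSet$ with the standard model structure is proper \cite[Thm.~13.1.13]{Hi03}, and that a map of $nsSet$ is a weak equivalence precisely when its image under $U$ is a weak equivalence of $sSet$ (equivalently, under $Ex^2U$, since $Ex$ preserves and reflects weak equivalences \cite[Cor.~4.6.21]{FP90}). Left properness is essentially already packaged in \cref{lem:pushout_along_cofibration}; right properness is where the real work lies.

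For left properness I would start from a cocartesian square in $nsSet$ in which $i:A\to B$ is a cofibration and $f:A\to C$ is a weak equivalence, and show that the cobase change $B\to D(B\sqcup_AC)$ is a weak equivalence. Suppressing $U$ from the notation, the pushout in $nsSet$ is $D(B\sqcup_AC)$, and the structure map factors through the pushout in $sSet$ as $B\to B\sqcup_AC\xrightarrow{\eta_{B\sqcup_AC}}D(B\sqcup_AC)$. Because $i$ is a cofibration, \cref{lem:cofib_degreewise_injective} shows that $U(i)$ is degreewise injective, hence a cofibration of $sSet$; since $sSet$ is left proper and $f$ is a weak equivalence, the first map is a weak equivalence. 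The second map is a weak equivalence by \cref{lem:pushout_along_cofibration}. Thus $B\to D(B\sqcup_AC)$ is a composite of weak equivalences, and $nsSet$ is left proper.

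For right properness I would start from a cartesian square in $nsSet$ in which $p:E\to Z$ is a fibration and $g:Y\to Z$ is a weak equivalence, and show that the base change $P\to E$ is a weak equivalence. The key observations are that $U$ preserves pullbacks, being a right adjoint to $D$ (equivalently, because a limit of non-singular simplicial sets computed in $sSet$ is again non-singular), and that $Ex$ likewise preserves pullbacks, being a right adjoint to $Sd$. Applying $Ex^2U$ to the square therefore produces a cartesian square in $sSet$ whose right-hand vertical map $Ex^2U(p)$ is a Kan fibration, which is exactly the definition of $p$ being a fibration of $nsSet$, and whose bottom map $Ex^2U(g)$ is a weak equivalence, since $Ex$ preserves weak equivalences. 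As $sSet$ is right proper, the top map $Ex^2U(P\to E)$ is a weak equivalence; and since $Ex$ reflects weak equivalences, $U(P\to E)$ is a weak equivalence, so $P\to E$ is a weak equivalence of $nsSet$.

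The hard part is the right-properness step, where the obstacle is that $U$ applied to an $nsSet$-fibration need not be a Kan fibration of $sSet$, so one cannot invoke the right properness of $sSet$ directly on the $U$-image of the square. The device that resolves this is to transport the entire cartesian square through $Ex^2$: this simultaneously converts the fibration into a genuine Kan fibration (by the very definition of the fibrations of $nsSet$), preserves the pullback (as $Ex$ is a right adjoint), carries the weak equivalence along the bottom row, and reflects the resulting weak equivalence in the top row back down to $U$.
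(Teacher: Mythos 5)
Your proof is correct and follows essentially the same route as the paper: left properness is obtained exactly as in the paper's proof, by factoring the cobase change through the $sSet$-pushout and combining \cref{lem:cofib_degreewise_injective}, left properness of $sSet$, and \cref{lem:pushout_along_cofibration}. For right properness the paper simply declares it automatic from the properness of $sSet$ (the structure being right-induced along $Ex^2U$); your paragraph spelling out that $Ex^2U$ preserves pullbacks, sends fibrations to Kan fibrations, and reflects weak equivalences is just the explicit version of that one-line remark.
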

\begin{proof}[Proof of \cref{prop:axiom_of_propriety}]
The model category $nsSet$ is automatically right proper as $sSet$ with the standard model structure is proper \cite[Thm.~13.1.13, p.~242]{Hi03}. We prove that $nsSet$ is left proper and thus proper.

Let $i:A\to B$ be a cofibration in $nsSet$. Suppose $f:A\to C$ a weak equivalence in $nsSet$. We will prove that the cobase change of $f$ along $i$ is a weak equivalence. Consider the diagram
\begin{displaymath}
\xymatrix{
A \ar[d]_i \ar[r]^f_\sim & C \ar[d]_{\bar{\imath } } \ar@/^1pc/[ddr]^j \\
B \ar[r]^(.35){\bar{f} } \ar@/_1pc/[drr]_g & B\sqcup _AC \ar[dr]^\sim \\
&& D(B\sqcup _AC)
}
\end{displaymath}
in $sSet$. The map
\[\eta _{B\sqcup _AC}:B\sqcup _AC\xrightarrow{\sim } D(B\sqcup _AC)\]
is a weak equivalence in $sSet$ as $i$ is a cofibration in $nsSet$. This is by \cref{lem:pushout_along_cofibration}.

The map $i$ is degreewise injective by \cref{lem:cofib_degreewise_injective} and hence a cofibration in $sSet$. Therefore, by propriety of $sSet$ it follows that $\bar{f}$ is a weak equivalence in $sSet$. Thus the composite $g$ is a weak equivalence in $sSet$. It is the cobase change in $nsSet$ of $f$ along $i$. Thus $nsSet$ is left proper, as was announced.
\end{proof}
\noindent Note that left propriety implies that we have a glueing lemma in the model category $nsSet$ \cite[Prop.~13.3.9, p.~246]{Hi03}.

We conclude this section by making a remark concerning the status of the work on characterizing the cofibrations and cofibrant objects in $nsSet$.
\begin{remark}
It does not seem likely that every composition of Str\o m maps is a cofibration. However, the converse may be true. According to the general theory, the $DSd^2(I)$-cofibrations are precisely the retracts of the relative $DSd^2(I)$-cell complexes \cite[Cor.~10.5.23, p.~200]{Hi03}.

The author has conjectured that every cofibrant non-singular simplicial set that is the nerve of a small category is even the nerve of a poset. This is analogous to Thomason's result that every cofibrant category is a poset \cite[Prop.~5.7]{Th80}. For a justification of this conjecture and for empirical evidence, one can have a look at \cite[Ch.~8]{Fj18}.

On the other hand, May, Stephan and Zakharevich \cite[p.~13]{MSZ17} has found a six-element poset in the model structure on $PoSet$ due to Raptis \cite{Ra10} that is not cofibrant. Let $P$ denote this poset. Because the right adjoint of the functor $q:PoSet\to nsSet$ is fully faithful, the counit $qNP\xrightarrow{\cong } P$ is an isomorphism. As $q$ is a left Quillen functor, the poset $qNP$ is cofibrant if $NP$ is, so $NP$ cannot be cofibrant in $nsSet$.

Bruckner and Pegel \cite{BP16} have found several classes of posets that are cofibrant in the model structure on $PoSet$ due to Raptis \cite{Ra10}. Hence, to claim that the nerve of any element taken from any of Bruckner's and Pegel's classes are cofibrant in $nsSet$ does not contradict the current knowledge of Raptis' model category.
\end{remark}

\section{A homotopy inverse of the inclusion}
\label{sec:inverse}

In this section, we prove that the Quillen pair $(DSd^2,Ex^2U)$ is indeed a Quillen equivalence. This is stated as \cref{prop:homotopy_inverse} below. In other words, towards the end of this section, we have sufficient knowledge to establish \cref{thm:main_homotopy_theory}, which is our main result.

Intuitively, the first step towards establishing the Quillen equivalence is the following result.
\begin{proposition}\label{prop:desing_double_subdvision_homotopy_inverse}
Let $X$ be a simplicial set. The unit $Sd^2\, X\to UDSd^2\, X$ of the adjunction
\begin{displaymath}
 \xymatrix{
 sSet \ar@<+.7ex>[r]^D & nsSet \ar@<+.7ex>[l]^U
 }
\end{displaymath}
is a weak equivalence.
\end{proposition}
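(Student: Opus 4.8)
The plan is to prove the claim by induction on the skeletal filtration of $X$, comparing at each stage the pushout formed in $sSet$ with the one formed in $nsSet$. Recall that $Sd$ preserves colimits and degreewise injective maps \cite[Cor.~4.2.9,~Cor.~4.2.11]{FP90}, that $D$ preserves colimits as a left adjoint, and that $U$ preserves filtered colimits. Writing $\emptyset =X^{-1}\subseteq X^0\subseteq \cdots$ for the skeleta, so that $X\cong colim_nX^n$ and each $X^n$ is the pushout of $\coprod \Delta [n]\leftarrow \coprod \partial \Delta [n]\to X^{n-1}$ along the attaching map, I would apply $Sd^2$ to get $Sd^2X\cong colim_nSd^2X^n$, where $Sd^2X^n$ is the pushout in $sSet$ of $\coprod Sd^2\Delta [n]\leftarrow \coprod Sd^2\partial \Delta [n]\to Sd^2X^{n-1}$. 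Applying $D$ and using that $Sd^2\Delta [n]$ and $Sd^2\partial \Delta [n]$ are non-singular, I get $DSd^2X\cong colim_nDSd^2X^n$ in $nsSet$, with $DSd^2X^n$ the pushout in $nsSet$ of $\coprod Sd^2\Delta [n]\leftarrow \coprod Sd^2\partial \Delta [n]\to DSd^2X^{n-1}$. Naturality of the desingularization unit then supplies maps $\eta _n:Sd^2X^n\to UDSd^2X^n$ compatible with the two filtrations.

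It suffices to show that each $\eta _n$ is a weak equivalence. Indeed, the transition maps of the first tower are degreewise injective since $Sd$ preserves such maps, and those of the second tower are compositions of Str\o m maps by \cref{cor:two-fold_subdivision_strom} and \cref{prop:Strom-maps_closed_under_cobasechange}, hence degreewise injective as well. Since weak equivalences in $sSet$ are stable under the colimit of an $\omega$-sequence of degreewise injective maps (argued via Kan's $Ex^\infty$ exactly as in the proof of \cref{lem:pushout_along_transfinite_comp_of_strom}), and $\eta _{Sd^2X}$ is the colimit of the $\eta _n$ by naturality, the claim would follow.

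For the induction, the base case $n=-1$ is the identity of $\emptyset$. Assuming $\eta _{n-1}$ is a weak equivalence, I would first note that $\coprod Sd^2\partial \Delta [n]\to \coprod Sd^2\Delta [n]$ is a composition of Str\o m maps: each summand is a Str\o m map by \cref{cor:two-fold_subdivision_strom}, and a coproduct of Str\o m maps is realized as an $\omega$-sequence of cobase changes of the individual maps, which are again Str\o m by \cref{prop:Strom-maps_closed_under_cobasechange}. Applying the glueing lemma in the left proper category $sSet$ to the ladder whose common left leg $\coprod Sd^2\partial \Delta [n]\to \coprod Sd^2\Delta [n]$ is a cofibration and whose vertical maps are $id$, $id$ and $\eta _{n-1}$, I obtain a weak equivalence $Sd^2X^n\xrightarrow{\sim } P$, where
\[P=\coprod Sd^2\Delta [n]\sqcup _{\coprod Sd^2\partial \Delta [n]}UDSd^2X^{n-1}\]
is the pushout in $sSet$. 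Then \cref{lem:pushout_along_transfinite_comp_of_strom} yields a weak equivalence $P\xrightarrow{\sim } DP=UDSd^2X^n$.

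It remains to identify the composite $Sd^2X^n\to P\to UDSd^2X^n$ with $\eta _n$. Both maps are degreewise surjective---the first because $\eta _{n-1}$ is degreewise surjective and pushouts of degreewise surjective maps are again degreewise surjective in $sSet$, and the second because it is a desingularization unit---so their composite is a degreewise surjective map with non-singular target and therefore coincides with $\eta _{Sd^2X^n}=\eta _n$ by the universal property of desingularization. This shows $\eta _n$ is a weak equivalence and closes the induction. The main obstacle I anticipate is the bookkeeping of which pushouts are formed in $sSet$ versus $nsSet$ and the verification that the composite above really is the unit; the genuine homotopical content is entirely supplied by \cref{lem:pushout_along_transfinite_comp_of_strom} together with the glueing lemma.
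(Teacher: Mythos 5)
Your proposal is correct and follows essentially the same route as the paper: induction over the skeleton filtration, the glueing lemma in $sSet$ to compare $Sd^2X^n$ with the pushout $P$ formed in $sSet$, and then the homotopical well-behavedness of desingularizing a pushout along a Str\o m map to pass from $P$ to $UDSd^2X^n$; the paper invokes \cref{lem:Pushout_along_strom_homotopically_wellbehaved} directly (the coproduct inclusion $\coprod Sd^2\partial\Delta[n]\to\coprod Sd^2\Delta[n]$ is already a single Str\o m map by \cref{cor:two-fold_subdivision_strom}, so your detour through transfinite compositions and \cref{lem:pushout_along_transfinite_comp_of_strom} is sound but unnecessary). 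One small caveat: your justification that the composite $Sd^2X^n\to P\to UDSd^2X^n$ equals the unit because it is ``degreewise surjective with non-singular target'' is not a valid principle (e.g.\ $\Delta[1]\to\Delta[0]$ is such a map but is not the unit); the correct argument is that both maps out of the pushout $Sd^2X^n$ agree on the three corners by naturality of $\eta$, hence coincide --- the conclusion you want still holds.
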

\begin{proof}
Consider the skeleton filtration
\[X^0\to X^1\to \cdots \to X^n\to \cdots \]
of $X$, given by successively attaching the non-degenerate $k$-simplices to the $(k-1)$-skeleton, $k>0$. Note that $Sd^2\, X^n$ can be built from $Sd^2\, X^{n-1}$ as the Kan subdivision preserves colimits and 
\newpage{}
\noindent degreewise injective maps \cite[Prop.~4.6.3~(i),~p.~200]{FP90}.

By naturality, the unit $Sd^2\, X\to UDSd^2\, X$ arises as a map between sequential colimits from the diagram
\begin{displaymath}
\xymatrix{
Sd^2\, X^0 \ar[d]^\cong \ar[r] & Sd^2\, X^1 \ar[d] \ar[r] & \cdots \ar[r] & Sd^2\, X^n \ar[d] \ar[r] & \cdots \\
UDSd^2\, X^0 \ar[r] & UDSd^2\, X^1 \ar[r] & \cdots \ar[r] & UDSd^2\, X^n \ar[r] & \cdots
}
\end{displaymath}
in $sSet$. This is because $D$ is a left adjoint and because $U:nsSet\to sSet$ preserves filtered colimits (say by \cite[Lem.~5.1.2.]{Fj18}).

If $Sd^2\, X^n\to UDSd^2\, X^n$ is a weak equivalence for each $n\geq 0$, then $Sd^2\, X\to UDSd^2\, X$ is a weak equivalence. Now, the map
\[Sd^2\, X^0\xrightarrow{\cong} UDSd^2\, X^0\]
is an isomorphism for every $X$, because every $0$-dimensional simplicial set is non-singular.

Suppose $n>0$ is such that $Sd^2\, X^{n-1}\to UDSd^2\, X^{n-1}$ is a weak equivalence.  Hence, the diagram
\begin{equation}
\label{eq:diagram_proof_of_prop_desing_double_subdvision_homotopy_inverse}
\begin{gathered}
\xymatrix@=1em{
Sd^2(\bigsqcup _{x\in X^\sharp _n}\Delta [n]) \ar[d]^\cong & Sd^2(\bigsqcup _{x\in X^\sharp _n}\partial \Delta [n]) \ar[l] \ar[d]^\cong \ar[r] & Sd^2\, X^{n-1} \ar[d]^\sim \\
UDSd^2(\bigsqcup _{x\in X^\sharp _n}\Delta [n]) & UDSd^2(\bigsqcup _{x\in X^\sharp _n}\partial \Delta [n]) \ar[l] \ar[r] & UDSd^2\, X^{n-1}
}
\end{gathered}
\end{equation}
in $sSet$ yields a factorization
\[Sd^2\, X^n\xrightarrow{\sim } Z\to UDSd^2\, X^n\]
of the unit $Sd^2\, X^n\to UDSd^2\, X^n$ as a map between the pushouts $Sd^2\, X^n$ and $Z$ in $sSet$ followed by a canonical map $Z\to UDSd^2\, X^n$.

By the glueing lemma, the map $Sd^2\, X^n\xrightarrow{\sim } Z$ is a weak equivalence as the two left hand horizontal maps of (\ref{eq:diagram_proof_of_prop_desing_double_subdvision_homotopy_inverse}) are degreewise injective.

The map
\[Sd^2(\bigsqcup _{x\in X^\sharp _n}\partial \Delta [n])\to Sd^2(\bigsqcup _{x\in X^\sharp _n}\Delta [n])\]
is a Str\o m map by \cref{cor:two-fold_subdivision_strom}. By \cref{lem:Pushout_along_strom_homotopically_wellbehaved} it therefore follows that $Z\xrightarrow{\sim } UDSd^2\, X^n$ is a weak equivalence.
\end{proof}
\noindent Thus we obtain the fact that the homotopy type is preserved when we apply desingularization to the double Kan subdivision of some simplicial set.

Our second step is to move from considering the adjunction $(D,U)$ to considering the adjunction $(DSd^2,Ex^2U)$.
\begin{lemma}\label{lem:unit_weak_eq}
The unit $\eta _X:X\to Ex^2UDSd^2X$ is in general a weak equivalence.
\end{lemma}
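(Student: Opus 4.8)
The plan is to recognize $\eta_X$ as the unit of the composite adjunction obtained by stacking $(Sd^2,Ex^2)$ on $sSet$ underneath $(D,U)$, and to factor it accordingly. Writing $\eta^{Sd^2}$ for the unit of $(Sd^2,Ex^2)$ and $\eta^{D}$ for the unit of $(D,U)$, the unit of the composite adjunction $(DSd^2,Ex^2U)$ at $X$ is the composite
\[
X \xrightarrow{\ \eta^{Sd^2}_X\ } Ex^2Sd^2X \xrightarrow{\ Ex^2(\eta^{D}_{Sd^2X})\ } Ex^2UDSd^2X .
\]
So it suffices to show that each of the two factors is a weak equivalence and then invoke two-out-of-three.

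For the first factor I would use the standard fact that the unit of $(Sd^2,Ex^2)$ is a weak equivalence; it is enough to treat $(Sd,Ex)$ and iterate once. The composite of $\eta^{Sd}_X:X\to Ex\,Sd\,X$ with $Ex(d_X):Ex\,Sd\,X\to Ex\,X$ is precisely the canonical map $X\to Ex\,X$ given by precomposition with the last vertex maps (both arrows being the adjunction transpose of the last vertex map $d_X:Sd\,X\to X$). Since $d_X$ is a weak equivalence and $Ex$ preserves weak equivalences \cite[Cor.~4.6.21]{FP90}, the map $Ex(d_X)$ is a weak equivalence; as $X\to Ex\,X$ is a weak equivalence by Kan's theorem, two-out-of-three forces $\eta^{Sd}_X$ to be one, and a second application handles the iteration to $Sd^2$. (Alternatively, one may argue directly from the fact that $(Sd^2,Ex^2)$ is a Quillen equivalence: every simplicial set is cofibrant, so the derived unit $X\to Ex^2RSd^2X$ is a weak equivalence, and since $Ex^2$ preserves the fibrant-replacement weak equivalence $Sd^2X\to RSd^2X$, two-out-of-three again yields that the strict unit $\eta^{Sd^2}_X$ is a weak equivalence.)

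The second factor carries the genuine homotopical content, but that content is already in hand: by \cref{prop:desing_double_subdvision_homotopy_inverse} the unit $\eta^{D}_{Sd^2X}:Sd^2X\to UDSd^2X$ is a weak equivalence, and applying $Ex^2$, which preserves weak equivalences \cite[Cor.~4.6.21]{FP90}, keeps it a weak equivalence. Hence $\eta_X$ is a composite of two weak equivalences and is therefore a weak equivalence. The only step demanding genuine care is the bookkeeping that the displayed composite really is the unit of the composite adjunction $(DSd^2,Ex^2U)$; once that identification is made, \cref{prop:desing_double_subdvision_homotopy_inverse} supplies the essential input and nothing further is needed.
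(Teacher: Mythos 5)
Your proposal is correct and follows essentially the same route as the paper: factor the unit of the composite adjunction as $\eta^{Sd^2}_X$ followed by $Ex^2(\eta^D_{Sd^2 X})$, handle the first factor via the last vertex map and two-out-of-three, and handle the second via \cref{prop:desing_double_subdvision_homotopy_inverse} together with the fact that $Ex^2$ preserves weak equivalences. No gaps.
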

\noindent \cref{lem:unit_weak_eq} will follow from the bulk of the proof of \cref{prop:second_condition_lifting_criterion}. In the language of Fritsch and Latch \cite{FL81}, the construction $DSd^2$ is a homotopy inverse for the inclusion $U:nsSet\to sSet$.
\begin{proof}[Proof of \cref{lem:unit_weak_eq}.]
The unit of $(DSd^2,Ex^2U)$ is that of the composite adjunction
\begin{displaymath}
 \xymatrix{
 sSet \ar@<+.7ex>[r]^{Sd^2} & sSet \ar@<+.7ex>[l]^{Ex^2} \ar@<+.7ex>[r]^D & nsSet \ar@<+.7ex>[l]^U
 }
\end{displaymath}
and is therefore itself the composite
\begin{equation}
\label{eq:diagram_proof_of_lem_unit_weak_eq}
\begin{gathered}
\xymatrix{
X \ar[r]^(.3)\sim & Ex^2(Sd^2\, X) \ar[r] & Ex^2(UD(Sd^2\, X))
}
\end{gathered}
\end{equation}
where the first map is known to be a weak equivalence. To see that the latter statement is true, it is enough to realize that the unit $X\to ExSd\, X$ of $(Sd,Ex)$ is a weak equivalence.

Adjoint \cite[p.~213]{FP90} to the last vertex map $d_X:Sd\, X\xrightarrow{\sim } X$ is a natural weak equivalence $e_X:X\xrightarrow{\sim } Ex\, X$ \cite[Lem.~4.6.20]{FP90}. The unit of $(Sd,Ex)$ is adjoint to the identity $Sd\, X\to Sd\, X$. Moreover, the unit of $(Sd,Ex)$ fits into the commutative triangle
\begin{displaymath}
\xymatrix{
& ExSd\, X \ar[dr]^{Ex(d_X)}_\sim \\
X \ar[ur] \ar[rr]_{e_X}^\sim && Ex\, X
}
\end{displaymath}
as we see from the commutative square
\begin{displaymath}
\xymatrix{
sSet(Sd\, X,Sd\, X) \ar[d]_{sSet(id,d_X)} \ar[r]^\cong & sSet(X,ExSd\, X) \ar[d]^{sSet(id,Ex(d_X))} \\
sSet(Sd\, X,X) \ar[r]_\cong & sSet(X,Ex\, X)
}
\end{displaymath}
in which $d_X$ is sent to $e_X$ under the lower horizontal map by definition and in which the identity is sent to the unit of $(Sd,Ex)$ under the upper horizontal map. The latter square implies that $e_X$ can be obtained by postcomposing the unit with $Ex(d_X)$. The two-out-of-three property implies that the unit is a weak equivalence.

The second map of the composite (\ref{eq:diagram_proof_of_lem_unit_weak_eq}) is the result of applying $Ex^2$ to the unit
\[Sd^2\, X\xrightarrow{\sim } UDSd^2\, X,\]
which is a weak equivalence by \cref{prop:desing_double_subdvision_homotopy_inverse}. Now, the functor $Ex^2$ preserves weak equivalences. This shows that the composite (\ref{eq:diagram_proof_of_lem_unit_weak_eq}) is a weak equivalence.
\end{proof}
\noindent Having proven that the unit of the Quillen pair $(DSd^2,Ex^2U)$ is a weak equivalence is in fact enough, in our case, to prove that the Quillen pair is indeed a Quillen equivalence.

We have so far followed Hirschhorn's terminology throughout this article. However, to prove \cref{prop:homotopy_inverse}, we will use a result in Hovey's book. Hirschhorn's and Hovey's definitions of the term Quillen equivalence are identical to the following.
\begin{definition}
Suppose $F:\mathscr{M} \rightleftarrows \mathscr{N} :G$ a Quillen pair with
\[\varphi :\mathscr{N} (FX,Y)\xrightarrow{\cong } \mathscr{M} (X,GY)\]
the natural bijection that comes with the underlying adjunction $(F,G)$ of categories. We say that $(F,G)$ is a \textbf{Quillen equivalence} if $f:FX\to Y$ is a weak equivalence in $\mathscr{N}$ if and only if $\varphi (f):X\to GY$ is a weak equivalence in $\mathscr{M}$ whenever $X$ is a cofibrant object of $\mathscr{M}$ and $Y$ is a fibrant object of $\mathscr{N}$.
\end{definition}
\noindent Moreover, this definition is independent of any choice of functorial factorizations and any choice of fibrant and cofibrant replacement functors.

A canonical choice of fibrant and cofibrant replacement functors are implicitly part of the model structure in Hovey's notion of model category \cite[Def.~1.1.3, p.~3]{Ho99}, whereas the opposite is true in Hirschhorn's notion \cite[Def.~7.1.3, p.~109]{Hi03}. Namely, Hirschhorn assumes the existence of two functorial factorizations, one as a cofibration followed by a trivial fibration and another as a trivial cofibration followed by a fibration. However, Hovey makes such a choice of functorial factorizations part of the model structure. Thus arises canonical fibrant and cofibrant replacement functors. To think of $(DSd^2,Ex^2U)$ as a Quillen pair according to Hovey, we must then make a choice of functorial factorizations for each of the model categories $sSet$ and $nsSet$.

Now, \cref{thm:lifting_across_adjunction} is the lifting theorem \cite[Thm.~11.3.2]{Hi03}, which applies the recognition theorem \cite[Thm.~11.3.1]{Hi03} whose proof uses the small object argument in the form \cite[Prop.~10.5.16]{Hi03}. From the latter result, which is more or less a standard formulation, we can read off that the small object argument establishes two functorial factorizations on $nsSet$, one into a relative $DSd^2(I)$-cell complex followed by a $DSd^2(I)$-injective, and another into a relative $DSd^2(J)$-cell complex followed by a $DSd^2(J)$-injective. We choose these to serve as part of the model structure on $nsSet$ according to Hovey's notion. Clearly, we follow the same procedure with regards to the sets $I$ and $J$ of maps in $sSet$.

When choices of functorial factorizations have been made, there is a canonical fibrant replacement functor $R$ in $nsSet$ that arises from the factorization
\[A\xrightarrow{r_A} RA\to \Delta [0]\]
of the terminal map, for each non-singular $A$, as a relative $DSd^2(J)$-cell complex $r_A$ followed by a fibration $RA\to \Delta [0]$. In other words, the non-singular simplicial set $A$ is replaced by a fibrant non-singular simplicial set $RA$, with a natural map $r_A$ from the original to its replacement.

The choices of functorial factorizations can simply be forgotten after the proof of \cref{prop:homotopy_inverse}. Because the term Quillen equivalence is defined the same way by both Hirschhorn and Hovey and because this definition has no reference to fibrant or cofibrant replacements, the pair $(DSd^2,Ex^2)$ will be a Quillen equivalence according to Hirschhorn if it is according to Hovey.

Finally, we obtain the last piece used to establish \cref{thm:main_homotopy_theory}, which is the main result.
\begin{proposition}\label{prop:homotopy_inverse}
The Quillen pair
\[DSd^2:sSet\rightleftarrows nsSet:Ex^2U\]
is a Quillen equivalence.
\end{proposition}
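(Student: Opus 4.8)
The plan is to invoke Hovey's characterization of Quillen equivalences \cite[Cor.~1.3.16]{Ho99}, which in the form we need asserts that a Quillen pair $F:\mathscr{M} \rightleftarrows \mathscr{N} :G$ is a Quillen equivalence provided that $G$ reflects weak equivalences between fibrant objects and that, for every cofibrant object $X$ of $\mathscr{M}$, the \emph{derived unit} $X\to GFX\to GR(FX)$ is a weak equivalence, where $R(FX)$ is a fibrant replacement of $FX$. I would apply this with $(\mathscr{M} ,\mathscr{N} )=(sSet,nsSet)$ and $(F,G)=(DSd^2,Ex^2U)$, noting that $(DSd^2,Ex^2U)$ is already known to be a Quillen pair by \cref{prop:main_homotopy_theory}. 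Choosing the criterion in this form is the one real decision in the argument, since it lets us avoid any analysis of the counit.

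The first hypothesis is immediate. By the very construction of the model structure on $nsSet$ in \cref{prop:main_homotopy_theory}, a map $f$ of $nsSet$ is a weak equivalence if and only if $Ex^2U(f)$ is a weak equivalence of $sSet$. Hence $Ex^2U$ not only reflects weak equivalences between fibrant objects, it reflects and preserves weak equivalences between arbitrary objects of $nsSet$.

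For the second hypothesis, recall that every object of $sSet$ is cofibrant, so it suffices to treat an arbitrary simplicial set $X$. Using the canonical fibrant replacement $r_{DSd^2X}:DSd^2X\to R(DSd^2X)$ discussed above, the derived unit factors as
\[X\xrightarrow{\eta _X} Ex^2U(DSd^2X)\xrightarrow{Ex^2U(r_{DSd^2X})} Ex^2U(R(DSd^2X)).\]
The first map $\eta _X$ is a weak equivalence by \cref{lem:unit_weak_eq}. The second map is $Ex^2U$ applied to $r_{DSd^2X}$, which is a weak equivalence of $nsSet$ (being a fibrant replacement), so $Ex^2U(r_{DSd^2X})$ is a weak equivalence of $sSet$ by the defining property of the weak equivalences of $nsSet$ recalled above. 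By the two-out-of-three property, the derived unit is therefore a weak equivalence, and both hypotheses of Hovey's criterion hold.

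The genuine work has already been carried out in \cref{lem:unit_weak_eq} (which in turn rests on \cref{prop:desing_double_subdvision_homotopy_inverse} and the Str\o m-map machinery), so what remains here is essentially bookkeeping. The only subtlety I anticipate is remembering that the relevant map is the derived unit rather than the bare unit $\eta _X$; this is harmless precisely because $Ex^2U$ preserves the weak equivalences of $nsSet$, which reduces the passage from $\eta _X$ to the derived unit to the two-out-of-three step above. \cref{thm:main_homotopy_theory} then follows by assembling \cref{prop:main_homotopy_theory} with this proposition.
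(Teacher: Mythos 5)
Your proposal is correct and takes essentially the same route as the paper's own proof: both invoke Hovey's criterion \cite[Cor.~1.3.16]{Ho99}, observe that $Ex^2U$ reflects (and preserves) weak equivalences because the model structure on $nsSet$ is right-induced along it, note that every simplicial set is cofibrant, and reduce the derived-unit condition to \cref{lem:unit_weak_eq} plus the fact that $Ex^2U$ carries the fibrant replacement map $r_{DSd^2X}$ to a weak equivalence. No gaps.
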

\begin{proof}
The pair $(DSd^2,Ex^2U)$ is a Quillen equivalence \cite[Cor.~1.3.16]{Ho99} if and only if $Ex^2U$ reflects weak equivalences between fibrant objects and the composite
\[X\xrightarrow{\eta _X} Ex^2UDSd^2X\xrightarrow{Ex^2U(r_{DSd^2X})} Ex^2URDSd^2X\]
is a weak equivalence for every cofibrant $X$. Here,
\[r_{DSd^2\, X}:DSd^2\, X\xrightarrow{\sim } RDSd^2\, X\]
is the natural relative $DSd^2(J)$-cell complex that comes with the fibrant replacement $R$.

As the model structure on $nsSet$ is lifted along the right adjoint $Ex^2U$, this functor  reflects weak equivalences without an assumption on either the source or the target. For the same reason, the functor $Ex^2U$ preserves weak equivalences. Any object in $sSet$ is cofibrant. Nevertheless, it follows that \cref{prop:homotopy_inverse} holds if the following result holds, which it does.
\end{proof}
\begin{proof}[Proof of \cref{thm:main_homotopy_theory}.]
First, by \cref{prop:main_homotopy_theory}, the category $nsSet$ is a cofibrantly generated model category and $(DSd^2,Ex^2U)$ is a Quillen pair when $sSet$ is equipped with the standard model structure due to Quillen. Second, the model category $nsSet$ satisfies the axiom of propriety according to \cref{prop:axiom_of_propriety}. Finally, \cref{prop:homotopy_inverse} says that the pair $(DSd^2,Ex^2U)$ is a Quillen equivalence.
\end{proof}

\section{Relating the model categories}
\label{sec:relations}

In this section, we complete the diagram (\ref{eq:diagram_of_adjunctions}) of adjunctions in the sense explained in the introduction. Namely, we promised that the diagram would consist exclusively of model categories and Quillen equivalences.

Verifing that $(D,U)$ is a Quillen equivalence when $sSet$ has the $Sd^2$-model structure of Jardine, is not hard. We state this result as \cref{cor:sd2_structure_d_u_Quillen}. Similarly, we can verify that $(q,N)$ is a Quillen equivalence when $PoSet$ has the model structure of Raptis. This we state as \cref{cor:square_of_quillen_equivalences}.

First, we establish the relationship with posets.
\begin{lemma}\label{cor:square_of_quillen_equivalences}
If $PoSet$ has Raptis' model structure \cite{Ra10} and $nsSet$ has the model structure suggested in \cref{thm:main_homotopy_theory}, then $(q,N)$ is a Quillen equivalence.
\end{lemma}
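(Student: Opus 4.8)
The plan is to deduce this from the Quillen equivalences already assembled in the bottom square of~(\ref{eq:diagram_of_adjunctions}), combining the commutativity $N\circ U=U\circ N$ of the square of right adjoints (established in \cref{sec:pre_exist}) with the two-out-of-three property for Quillen equivalences. Throughout I would equip the copy of $sSet$ adjacent to $Cat$, $nsSet$ and $PoSet$ with Jardine's $Sd^2$-model structure, because it is with respect to \emph{that} structure that $(c,N):sSet\rightleftarrows Cat$ is a Quillen equivalence (Jardine \cite[Thm.~3.1]{Ja13}) and that $(D,U):sSet\rightleftarrows nsSet$ is a Quillen equivalence (\cref{cor:sd2_structure_d_u_Quillen}). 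Since $(p,U):Cat\rightleftarrows PoSet$ is a Quillen equivalence by Raptis \cite{Ra10}, the composite $(pc,NU):sSet\rightleftarrows PoSet$ is a Quillen equivalence as a composition of two Quillen equivalences.

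First I would verify that $(q,N)$ is a Quillen pair in the first place. Both relevant model structures are right-induced from $sSet$: a map $f$ of $nsSet$ is a fibration (resp.\ weak equivalence) exactly when $Ex^2U(f)$ is one, while Raptis' structure is the restriction of Thomason's, so the inclusion $U:PoSet\to Cat$ creates the fibrations and weak equivalences and a map $g$ of $PoSet$ is a fibration (resp.\ weak equivalence) exactly when $Ex^2NU(g)$ is one. Because the square of right adjoints commutes, $U\circ N=N\circ U$ as functors $PoSet\to sSet$, so $Ex^2U(Ng)=Ex^2NU(g)$ for every $g$. Hence $N:PoSet\to nsSet$ preserves (and in fact reflects) fibrations, trivial fibrations and weak equivalences, which shows that $N$ is a right Quillen functor and $(q,N)$ a Quillen pair.

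Next I would identify the relevant composite. Postcomposing the Quillen pair $(D,U):sSet\rightleftarrows nsSet$ with $(q,N):nsSet\rightleftarrows PoSet$ yields a Quillen pair $sSet\rightleftarrows PoSet$ whose left adjoint is $q\circ D$ and whose right adjoint is $U\circ N$. Applying the commutativity $U\circ N=N\circ U$ once more, this right adjoint is precisely the right adjoint $N\circ U$ of the composite Quillen equivalence $(pc,NU)$. Two Quillen pairs with equal right adjoints have naturally isomorphic left adjoints, so the composite $(q\circ D,\,U\circ N)$ coincides with $(pc,NU)$ and is therefore a Quillen equivalence. Finally I would invoke two-out-of-three: passing to total derived functors gives $\mathbf{L}(q\circ D)\cong\mathbf{L}q\circ\mathbf{L}D$, which is an equivalence of homotopy categories, and $\mathbf{L}D$ is an equivalence since $(D,U)$ is a Quillen equivalence; hence $\mathbf{L}q$ is an equivalence and $(q,N)$ is a Quillen equivalence.

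I expect the only real obstacle to be bookkeeping rather than mathematics: one must keep straight which of the several functors named $N$ and $U$ is meant at each point, and apply the identity $N\circ U=U\circ N$ in the correct variance so that the right adjoint of the composite $(q,N)\circ(D,U)$ is genuinely matched with the right adjoint of $(p,U)\circ(c,N)$. Once the model structures on $nsSet$ and $PoSet$ are both recognized as right-induced from $sSet$ along $Ex^2U$ and $Ex^2NU$ respectively, every step above is formal.
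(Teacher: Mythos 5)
Your proposal is correct, and it shares the paper's overall skeleton (verify that $(q,N)$ is a Quillen pair, then deduce the equivalence from the commuting square of right adjoints and two-out-of-three for Quillen equivalences), but both main steps are carried out differently. For the Quillen pair, the paper works on the left-adjoint side: it uses the natural isomorphism $pc\,X\cong qD\,X$ (from $N\circ U=U\circ N$ and uniqueness of left adjoints) together with the generating sets to show $q(DSd^2(I)\text{-cof})\subseteq pcSd^2(I)\text{-cof}$, and likewise for $J$; you instead work on the right-adjoint side, observing that both model structures are right-induced along $Ex^2U$ and $Ex^2NU$ respectively, so that $Ex^2U(Ng)=Ex^2NU(g)$ makes $N$ preserve fibrations and weak equivalences outright. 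Your version is more direct and avoids the generating sets, at the cost of leaning on the fact that Raptis' fibrations and weak equivalences are exactly those created by $U:PoSet\to Cat$ (which the paper also cites). For the two-out-of-three step, the paper compares the composite of $(q,N)$ with $(DSd^2,Ex^2U)$ against the composite of $(p,U)$ with $(cSd^2,Ex^2N)$, both starting from $sSet$ with the standard structure, whereas you route through Jardine's $Sd^2$-structure and compare $(q,N)\circ(D,U)$ with $(p,U)\circ(c,N)$. This is valid but makes your proof depend on \cref{cor:sd2_structure_d_u_Quillen}, which the paper proves only after this lemma; since that result is established independently (from \cref{thm:main_homotopy_theory} and Jardine's theorem that $(Sd^2,Ex^2)$ is a Quillen equivalence), there is no circularity, but if your argument were inserted in place of the paper's, the two lemmas of the final section would need to be reordered.
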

\begin{proof}
A set of generating cofibrations in Thomason's model category $Cat$ is $cSd^2(I)$ and a set of generating trivial cofibrations is $cSd^2(J)$, as Raptis points out in his overview and slight modernization of Thomason's work \cite[Thm.~2.2, p.~215]{Ra10}.

Raptis' cofibrantly generated model structure on $PoSet$ is restricted from $Cat$ in the sense that the weak equivalences of $PoSet$ are the weak equivalences of $Cat$ whose source and target are both posets, and similarly for the cofibrations and the fibrations \cite[Thm.~2.6 ,p.~217]{Ra10}. The sets $pcSd^2(I)$ and $pcSd^2(J)$ can be taken to be a set of generating cofibrations and a set of generating trivial cofibrations in $PoSet$ as well, respectively \cite[Thm.~2.6, p.~217]{Ra10}.

Consider applying the functor
\[q:nsSet\to PoSet\]
to the class $DSd^2(I)-cof$ of cofibrations in $nsSet$. The functor $q$ is in \cref{sec:intro_hty} defined as $q=pcU$. Due to the equality $N\circ U=U\circ N$ of the two composites of right adjoints and by the uniqueness of the left adjoint, we get a natural isomorphism $pc\, X\xrightarrow{\cong } qD\, X$. Thus we get the equality in the expression
\[q(DSd^2(I)-cof)\subseteq qDSd^2(I)-cof=pcSd^2(I)-cof\]
where the inclusion comes from a general rule stated as Lemma 2.1.8 in \cite[p.~30]{Ho99}. Hence, the left adjoint $q$ preserves cofibrations. Similarly, by replacing $I$ by $J$, we see that $q$ preserves the trivial cofibrations. This finishes our verification that $q$ is a left Quillen functor and hence that $(q,N)$ is a Quillen pair.

The composite of $(p,U)$ and $(cSd^2,Ex^2N)$ is a Quillen equivalence. Furthermore, the composite of $(q,N)$ and $(DSd^2,Ex^2U)$ is a Quillen pair. By Corollary 1.3.14 in \cite[p.~20]{Ho99}, the latter composite is a Quillen equivalence if and only if the former is. Now, consider the two Quillen pairs $(q,N)$, $(DSd^2,Ex^2U)$ together with their composite. By \cref{thm:main_homotopy_theory} we know that two of these three Quillen pairs are Quillen equivalences. Hence, the third is a Quillen equivalence by Corollary 1.3.15. in Hovey's book \cite[p.~21]{Ho99}.
\end{proof}
\noindent Finally, we establish the relationship with Jardine's $Sd^2$-model structure on simplicial sets.
\begin{lemma}\label{cor:sd2_structure_d_u_Quillen}
Let the category $sSet$ have J. F. Jardine's $Sd^2$-structure from \cite[p.~274]{Ja13}. Then $(D,U)$ is a Quillen equivalence.
\end{lemma}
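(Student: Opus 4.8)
The plan is to realize the Quillen equivalence $(DSd^2, Ex^2U)$ of \cref{thm:main_homotopy_theory} as a composite having $(D,U)$ as one factor, and then to apply the two-out-of-three property for Quillen equivalences, in the spirit of the proof of \cref{cor:square_of_quillen_equivalences}. Write $sSet$ for the category equipped with the standard model structure and let $sSet_{Sd^2}$ denote the same underlying category carrying Jardine's $Sd^2$-model structure. Composing left adjoints gives $D\circ Sd^2 = DSd^2$ and composing right adjoints gives $Ex^2\circ U = Ex^2U$, so $(DSd^2, Ex^2U)$ is exactly the composite of Jardine's Quillen equivalence $(Sd^2, Ex^2):sSet\rightleftarrows sSet_{Sd^2}$ with the pair $(D,U):sSet_{Sd^2}\rightleftarrows nsSet$.

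The only point that needs genuine checking is that $(D,U)$ is a Quillen pair when the source $sSet$ carries the $Sd^2$-structure, and for this I would verify that the right adjoint $U$ preserves fibrations and trivial fibrations. By construction of the right-induced model structure on $nsSet$, a map $f$ is a fibration there precisely when $Ex^2U(f)$ is a Kan fibration. By the definition of Jardine's structure, $U(f)$ is a fibration in $sSet_{Sd^2}$ --- an $Ex^2$-fibration --- precisely when $Ex^2(U(f))$ is a Kan fibration. Since $Ex^2(U(f))=Ex^2U(f)$, these two conditions agree, so $U$ sends fibrations to fibrations (indeed $f$ is a fibration in $nsSet$ if and only if $U(f)$ is one in $sSet_{Sd^2}$). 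For trivial fibrations I would use that a map $f$ is a weak equivalence in $nsSet$ if and only if $U(f)$ is a standard weak equivalence, while the weak equivalences of $sSet_{Sd^2}$ coincide with the standard ones; hence $U$ preserves (and reflects) weak equivalences, and therefore carries trivial fibrations to trivial fibrations. This makes $U$ a right Quillen functor, so $(D,U)$ is a Quillen pair.

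With the Quillen pair established, I would consider the three pairs $(Sd^2,Ex^2)$, $(D,U)$, and their composite $(DSd^2,Ex^2U)$. Two of them are Quillen equivalences --- the first by Jardine \cite{Ja13} and the composite by \cref{thm:main_homotopy_theory} --- so by Corollary 1.3.15 in Hovey's book \cite[p.~21]{Ho99} the remaining one, namely $(D,U)$, is a Quillen equivalence as well. I do not expect a serious obstacle here: the entire argument rests on the observation that both model structures define their fibrations through $Ex^2$ and share the standard weak equivalences, so once the composite is recognized the conclusion is essentially formal. If anything, the most delicate bookkeeping is simply keeping track of which copy of $sSet$ carries which model structure in the composition calculus.
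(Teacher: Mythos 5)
Your proposal is correct and follows essentially the same route as the paper: reduce to showing $(D,U)$ is a Quillen pair by checking that $U$ preserves $Ex^2$-fibrations and trivial fibrations (using that both model structures define fibrations via $Ex^2$ and share the standard weak equivalences), then conclude via the two-out-of-three property for Quillen equivalences applied to $(Sd^2,Ex^2)$, $(D,U)$, and their composite $(DSd^2,Ex^2U)$. No gaps.
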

\begin{proof}
As in the proof of \cref{cor:square_of_quillen_equivalences}, we need only prove that $(D,U)$ is a Quillen pair. Then, by the two out of three-property for Quillen equivalences, it will follow that $(D,U)$ is a Quillen equivalences as $(Sd^2,Ex^2)$ is a Quillen equivalence according to J. F. Jardine \cite[Thm.~1.1.,~p.~274]{Ja13} and as $(DSd^2,Ex^2U)$ is a Quillen equivalence according to \cref{thm:main_homotopy_theory}.

We verify that $U$ is a right Quillen functor by verifying that it preserves fibrations and trivial fibrations. Then $(D,U)$ will be a Quillen pair. First, if $f$ is a fibration in $nsSet$, then $Ex^2Uf$ is a Kan fibration, by definition. Thus $Uf$ is an $Ex^2$-fibration by definition.

Second, if $f$ is a trivial fibration in $nsSet$, then $f$ is by definition both a weak equivalence in $nsSet$ and a fibration in $nsSet$. Thus $Uf$ is an $Ex^2$-fibration by the previous paragraph. Furthermore, the map $Ex^2Uf$ is a weak equivalence by definition. As $Ex$ preserves and reflects weak equivalences, it follows that $Uf$ is a weak equivalence. Recall that the weak equivalences in the standard model structure and the $Sd^2$-model structure are the same. Hence, $Uf$ is a trivial $Ex^2$-fibration. This concludes our verification that $U$ is a right Quillen functor.
\end{proof}

\keywords{Homotopy Theory \and Simplicial Sets \and Homotopical Algebra \and Cofibrantly Generated Model Categories}

\bibliographystyle{unsrt}  
\bibliography{main}  

\begin{thebibliography}{10}

\bibitem{Qu67}
Daniel~G. Quillen.
\newblock {\em Homotopical algebra}.
\newblock Lecture Notes in Mathematics, No. 43. Springer-Verlag, 1967.

\bibitem{Th80}
R.~W. Thomason.
\newblock Cat as a closed model category.
\newblock {\em Cahiers Topologie G\'eom. Diff\'erentielle}, 21(3):305--324,
  1980.

\bibitem{Ra10}
George Raptis.
\newblock Homotopy theory of posets.
\newblock {\em Homology, homotopy and applications}, 12(2):211--230, 2010.

\bibitem{WJR13}
Friedhelm Waldhausen, Bj{\o}rn Jahren, and John Rognes.
\newblock {\em Spaces of PL manifolds and categories of simple maps}, volume
  186 of {\em Annals of Mathematics Studies}.
\newblock Princeton University Press, 2013.

\bibitem{FP90}
Rudolf Fritsch and Renzo~A. Piccinini.
\newblock {\em Cellular structures in topology}, volume~19 of {\em Cambridge
  Studies in Advanced Mathematics}.
\newblock Cambridge University Press, 1990.

\bibitem{Hi03}
Philip~S. Hirschhorn.
\newblock {\em Model Categories and Their Localizations}, volume~99 of {\em
  Mathematical Surveys and Monographs}.
\newblock American Mathematical Society, 2003.

\bibitem{Ho99}
Mark Hovey.
\newblock {\em Model categories}, volume~63 of {\em Mathematical Surveys and
  Monographs}.
\newblock American Mathematical Society, 1999.

\bibitem{ML98}
Saunders Mac~Lane.
\newblock {\em Categories for the Working Mathematician}, volume~5 of {\em
  Graduate texts in mathematics}.
\newblock Springer-Verlag New York, Inc., 1998.

\bibitem{AR15}
J.~Adámek and J.~Rosický.
\newblock On reflective subcategories of locally presentable categories.
\newblock {\em Theory and Applications of Categories}, 30(41):1306--1318, 2015.

\bibitem{Fj20-DN}
R.~Vegard~S. Fjellbo.
\newblock Iterative desingularization.
\newblock {\em TBA 2020}, 2020.

\bibitem{Se68}
Graeme Segal.
\newblock Classifying spaces and spectral sequences.
\newblock {\em Publications Mathématiques de l'IHÉS}, 34:105--112, 1968.

\bibitem{GZ67}
P.~Gabriel and M.~Zisman.
\newblock {\em Calculus of fractions and homotopy theory}.
\newblock Ergebnisse der Mathematik und ihrer Grenzgebiete, Band 35.
  Springer-Verlag New York, Inc., New York, 1967.

\bibitem{FL81}
Rudolf Fritsch and Dana~M. Latch.
\newblock Homotopy inverses for nerve.
\newblock {\em Mathematische Zeitschrift}, 177:147--179, 1981.

\bibitem{Ci99}
Denis-Charles Cisinski.
\newblock La classe des morphismes de dwyer n’est pas stable par rétractes.
\newblock {\em Cahiers Topologie G\'eom. Diff\'erentielle}, 40(3):227--231,
  1999.

\bibitem{Ja13}
J.~F. Jardine.
\newblock Homotopy theories of diagrams.
\newblock {\em Theory and Applications of Categories}, 28:269--303, 2013.

\bibitem{RS72}
C.~P. Rourke and B.~J. Sanderson.
\newblock {\em Introduction to piecewise-linear topology}.
\newblock Springer-Verlag, 1972.
\newblock Ergebnisse der Mathematik und ihrer Grenzgebiete, Band 69.

\bibitem{Hu69}
J.~F.~P. Hudson.
\newblock {\em Piecewise linear topology}.
\newblock University of Chicago Lecture Notes. W. A. Benjamin, Inc., 1969.

\bibitem{St66}
Arne Strøm.
\newblock Note on cofibrations.
\newblock {\em Math. Scand.}, 19:11--14, 1966.

\bibitem{St72}
Arne Strøm.
\newblock The homotopy category is a homotopy category.
\newblock {\em Archiv der Mathematik}, 23:435--441, 1972.

\bibitem{CPS06}
Wojciech Chachólski and Wolfgang~and Pitsch.
\newblock Homotopy pull-back squares up to localization.
\newblock {\em Contemp. Math.}, 399:55--72, 2006.

\bibitem{Le78}
Gaunce Lewis~Jr.
\newblock {\em The Stable Category and Generalized Thom Spectra}.
\newblock PhD thesis, University of Chicago, 1978.
\newblock Appendix A Compactly Generated Spaces.

\bibitem{FR20}
R.~Vegard~S. Fjellbo and John Rognes.
\newblock Exponentials of non-singular simplicial sets.
\newblock {\em TBA 2020}, 2020.

\bibitem{Fj18}
R.~Vegard~S. Fjellbo.
\newblock {\em Non-singular simplicial sets}.
\newblock PhD thesis, University of Oslo, 2018.

\bibitem{GJ09}
Paul~G. Goerrs and John~F. Jardine.
\newblock {\em Simplicial Homotopy Theory}.
\newblock Birkhäuser Basel, 2009.

\bibitem{MSZ17}
J.~P. May, Marc Stephan, and I.~Zakharevich.
\newblock The homotopy theory of equivariant posets.
\newblock Results analogous to Raptis' for G-spaces.

\bibitem{BP16}
R.~Bruckner and C.~Pegel.
\newblock Cofibrant objects in the thomason model structure.
\newblock Examples of cofibrant objects in the Thomason model structure.

\end{thebibliography}

\end{document}